\title[Corks with large shadow-complexity and exotic $4$-manifolds]
{Corks with large shadow-complexity and exotic $4$-manifolds}
\author[Hironobu Naoe]{Hironobu Naoe}
\address{Tohoku University, Sendai, 980-8578, Japan}
\email{hironobu.naoe.p5@dc.tohoku.ac.jp}
\keywords{4-manifolds, corks, exotic pairs, shadow-complexity. }
\subjclass[2010]{Primary 57R55, 57M50; Secondary 57R65.}
\theoremstyle{plain}
\newtheorem{theorem}{Theorem}[section]
\newtheorem{lemma}[theorem]{Lemma}
\newtheorem{corollary}[theorem]{Corollary}
\newtheorem{proposition}[theorem]{Proposition}
\newtheorem{question}[theorem]{Question}
\theoremstyle{definition}
\newtheorem{definition}[theorem]{Definition}
\newtheorem{remark}[theorem]{Remark}
\newcommand{\Z}{\mathbb{Z}}
\newcommand{\R}{\mathbb{R}}
\newcommand{\CP}{\mathbb{CP}^2}
\newcommand{\mCP}{\overline{\mathbb{CP}}^2}
\newcommand{\RP}{\mathbb{RP}}
\newcommand{\Int}{\mathrm{Int}}
\newcommand{\gl}{\mathrm{gl}}
\newcommand{\Sing}{\mathrm{Sing}}
\newcommand{\shco}{\mathrm{sc}}
\newcommand{\spshco}{\mathrm{sc}^{\mathrm{sp}}}
\newcommand{\Mobius}{M\"{o}bius }
\definecolor{darkred}{rgb}{.80,.0,.0}
\definecolor{bl}{gray}{0.7}
\newlength{\myheight}
\newlength{\myheighta}
\long\def\@makecaption#1#2{
  \small
  \vskip\abovecaptionskip
  \sbox\@tempboxa{#1. #2}
  \ifdim \wd\@tempboxa >\hsize
    #1. #2\par
  \else
    \global \@minipagefalse
    \hb@xt@\hsize{\hfil\box\@tempboxa\hfil}
  \fi
  \vskip\belowcaptionskip}
\begin{document}
\begin{abstract}
We construct an infinite family $\{ C_{n,k}\}_{k=1}^{\infty}$ of corks of Mazur type 
satisfying $2n\leq \spshco(C_{n,k})\leq O(n^{3/2})$ for any positive integer $n$. 
Furthermore, using these corks, 
we construct an infinite family $\{(W_{n,k},W'_{n,k})\}_{k=1}^{\infty}$ 
of exotic pairs of $4$-manifolds with boundary 
whose special shadow-complexities satisfy the above inequalities. 
We also discuss exotic pairs with small shadow-complexity.
\end{abstract}

\maketitle
\section{Introduction}
A \textit{complexity} of manifolds is an invariant of manifolds 
which measures how they are simple or complicated. 
In dimension $4$, 
Costantino introduced \textit{shadow-complexity} by 
using Turaev's shadow \cite{Costantino:2006,Turaev:1994}. 
For a compact oriented smooth $4$-manifold $M$ with boundary, 
a \textit{shadow} of $M$ is a simple polyhedron properly embedded in $M$
such that it is locally flat and a strongly deformation retract of $M$. 
The shadow-complexity of $M$, denoted by $\shco(M)$, is defined as the minimal number 
of true vertices of a shadow of $M$. 
We refer the reader to Costantino \cite{Costantino:2006} and Martelli \cite{Martelli:2010} 
for studies of classification of $4$-manifolds according to the (special) shadow-complexity. 
It is known that the shadow-complexity has a crucial relation 
with the Gromov norm $\|\cdot\|$ of the boundary 3-manifolds.
Costantino and Thurston \cite{Costantino_Thurston:2008} showed that 
\[
C_1 \| N\|\leq \shco(N)\leq C_2 \| N\|^2
\]
for any geometric $3$-manifold $N$, 
where $C_1$ and $C_2$ are some universal constants and 
a shadow of $N$ is defined as a shadow of a $4$-manifold bounded by $N$. 
In particular, $N$ has hyperbolic pieces if and only if $\shco(N)\geq1$. 
By definition, we have $\shco(\partial M)\leq \shco(M)$ for any $4$-manifold $M$, 
and hence we can intuitively say that a $4$-manifold bounded by 
a $3$-manifold with high hyperbolicity has a large shadow-complexity. 

Now we focus on an attractive world of dimension $4$. 
Here \textit{exotic} is used in the sense that homeomorphic but not diffeomorphic. 
A $4$-manifold does not always have a unique smooth structure. 
%
It is well-known that any manifold exotic to a given closed and simply-connected $4$-manifold 
can be obtained by removing and regluing a contractible submanifold 
\cite{Matveyev:1996,Curtis_Freedman_Hsiang_Stong:1996}. 
This submanifold is called a \textit{cork}. 
We can assume that a cork is a compact Stein surface due to \cite{Akbulut_Matveyev:1998}. 
A cork plays significant roles in the study of smooth structures of $4$-manifolds. 
By using corks, many exotic pairs were found 
by Akbulut and Yasui \cite{Akbulut_Yasui:2008}.  
They also constructed finitely/infinitely many exotic pairs from a single cork
\cite{Akbulut_Yasui:2013, Akbulut_Yasui:20132, Yasui:2011}. 
Corks are also related to some other topics, 
for example \cite{Karakurt_Oba_Ukida:2017, Yasui:2015}. 

The shadow-complexity works in a rough classification of corks. 
In the previous work \cite{Naoe:2015}, the author found 
infinitely many corks with shadow-complexity $1$ and $2$. 
He also showed in \cite{Naoe:2017} 
that any acyclic $4$-manifold with shadow-complexity zero is diffeomorphic to the $4$-ball. 
In other words, there are no corks having shadow-complexity zero. 
Note that the special shadow-complexity $\spshco(M)$ of a manifold $M$ 
is defined as the minimal number of true vertices of a shadow of $M$ having only disk regions. 
The property that each region is a disk is required to perform hyperbolic Dehn fillings. 
The following is the main result in this paper. 
\begin{theorem}
\label{thm: corks}
For any positive integer $n$, there exists an infinite family $\{ C_{n,k}\}_{k=1}^{\infty}$ 
of corks of Mazur type such that 
\[
2n\leq \spshco(C_{n,k})\leq O(n^{3/2}). 
\]
More precisely, 
$\spshco(C_{n,k})$ is bounded above by a function $D(n)$ given by 
\begin{align*}
D(n)=
\begin{cases}
(n-1)\left\lceil \sqrt{4\pi^2n-1} \right\rceil + n + 
2\left\lceil \sqrt{4\pi^2n-1/4} \right\rceil
& (n:\text{odd}) \\[6pt]
(n-2)\left\lceil \sqrt{4\pi^2n-1} \right\rceil + n + 
4\left\lceil \sqrt{4\pi^2n-1/4} \right\rceil -2
& (n:\text{even}), 
  \end{cases}
\end{align*}
where $\lceil\cdot\rceil$ is the ceiling function. 
\end{theorem}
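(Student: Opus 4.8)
The plan is to fix an explicit handle presentation of the family, verify the cork properties from it, and then prove the two bounds by separate arguments — a hyperbolic-volume estimate on the boundary for the lower bound and a hands-on shadow construction for the upper bound.

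First I would construct $C_{n,k}$ as a manifold of Mazur type: start from $S^1\times D^3$ and attach a single framed $2$-handle along a knot $K_{n,k}$ that represents the generator of $\pi_1(S^1\times D^3)\cong\Z$ algebraically once, but is geometrically complicated. Choosing $K_{n,k}$ so that it links the belt sphere in a controlled symmetric pattern governed by $n$, and carries an additional $k$-fold twist, makes $C_{n,k}$ contractible after the handle slide/cancellation characteristic of a Mazur cork, while the twist parameter $k$ distinguishes the members and yields the infinite family. I would verify the cork property by exhibiting the standard involution of $\partial C_{n,k}$ coming from the symmetry of the diagram and checking, as for the Mazur cork, that it does not extend over $C_{n,k}$.

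For the lower bound I would pass to the boundary $Y_{n,k}:=\partial C_{n,k}$. A special shadow of $C_{n,k}$ restricts to a triangulation of $Y_{n,k}$, and the disk condition built into $\spshco$ is precisely what lets one perform the hyperbolic Dehn fillings that control the volume of $Y_{n,k}$ linearly in the number of true vertices; this refines the general Costantino--Thurston estimate $C_1\|N\|\le\shco(N)\le\spshco(\,\cdot\,)$. I would therefore engineer $K_{n,k}$ so that $Y_{n,k}$ is hyperbolic and decomposes along tori into $n$ pieces of uniformly bounded-below volume, forcing $\mathrm{vol}(Y_{n,k})$ (equivalently $\|Y_{n,k}\|$) to grow linearly in $n$ at a rate calibrated to produce exactly the clean inequality $2n\le\spshco(C_{n,k})$.

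For the upper bound I would read a special shadow directly off the handle decomposition and count its true vertices. The piece $S^1\times D^3$ contributes an annular, hence non-disk, region, and the $2$-handle contributes a disk glued along a diagram of $K_{n,k}$; to make every region a disk I would cut these regions along a family of arcs, each cut producing true vertices. Laying the $\sim n$ crossings of the symmetric diagram out on a disk of radius $\sim\sqrt{n}$ and closing the pattern up forces on the order of $2\pi\sqrt{n}$ cutting arcs per layer, and multiplying by the $\sim n$ layers yields the leading term $n\lceil\sqrt{4\pi^2n-1}\,\rceil$; a smaller contribution from the collar of the boundary accounts for the $\lceil\sqrt{4\pi^2n-1/4}\,\rceil$ terms, and the two cases reflect whether $n$ is odd or even in the symmetric layout. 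Careful bookkeeping gives precisely $D(n)$, and since $D(n)=O(n^{3/2})$ the asymptotics follow.

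The main obstacle I expect is the upper bound: organizing the subdivision so that every resulting region is genuinely a disk while simultaneously pushing the true-vertex count down to the sharp $D(n)$, rather than the naive $O(n^2)$ that a generic cellulation of an area-$n$ surface would give. Achieving $n^{3/2}$ in place of $n^2$ requires routing the cutting arcs along near-geodesics of the layout, which is exactly where the isoperimetric factor $2\pi\sqrt{n}$ enters. A secondary difficulty is making the boundary-volume estimate explicit enough to pin down the constant $2$ in $2n$, for which I would rely on the known volume of the specific hyperbolic block used in building $K_{n,k}$.
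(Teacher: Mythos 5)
Your proposal reproduces the general outline of the paper's argument (a Mazur-type handle presentation with a twist parameter $k$, an explicit shadow for the upper bound, hyperbolicity of the boundary for the lower bound), but three of the four essential steps are missing or would fail as described. First, the cork property: ``exhibiting the standard involution and checking that it does not extend'' is the entire difficulty, and it cannot be done bare-handed; the paper designs the attaching link to be a \emph{symmetric $2$-bridge link} with linking number $\pm1$ whose $2$-handle attaching circle admits a Legendrian position with Thurston--Bennequin number at least $1$, precisely so that the Akbulut--Matveyev criterion (Proposition \ref{prop:cork of Mazur type}) applies. Second, infiniteness: you assert that $k$ ``distinguishes the members'' but give no invariant. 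The paper proves this in Lemma \ref{lem: Casson} by exhibiting $\partial C_{n,k'}$ as $\tfrac{1}{d}$-surgery on a knot in $\partial C_{n,k}$, computing its Alexander polynomial from a Seifert matrix, and applying the surgery formula for the Casson invariant; without some such computation you have a single cork, not an infinite family.

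Third, the two bounds. Your lower bound via the Costantino--Thurston inequality is inherently lossy: the constant is $v_{\text{tet}}/(2v_{\text{oct}})\approx 0.14$, so to force $\spshco(C_{n,k})\geq 2n$ you would need $\|\partial C_{n,k}\|\gtrsim 14.5\,n$, i.e.\ explicit hyperbolic volume bounds of order $14.7\,n$ for a surgered homology sphere --- a serious step your sketch does not address, and one that would not yield the clean value $2n$ anyway. The paper instead obtains the \emph{exact} equality $\spshco(\partial C_{n,k})=2n$ from the Ishikawa--Koda theorem (Theorem \ref{thm:Ishikawa-Koda}): it builds a special shadow $Q''$ with $2n$ true vertices of a $4$-manifold $M_{n,k}$ with the same boundary, computes all gleams, and verifies the slope-length condition $\sqrt{4\gl(R)^2+v(R)^2}>2\pi\sqrt{2\cdot 2n}$ for every region. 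This is also where your account of the upper bound goes astray: the $n^{3/2}$ does not come from any isoperimetric routing of cutting arcs (true vertices of a shadow of a $2$-handlebody arise from crossings of the projected attaching circles, not from subdividing non-disk regions), but from the fact that the diagram must carry roughly $n$ twist regions with roughly $2\pi\sqrt{n}$ twists each so that the gleams are large enough for the slope-length condition. In the paper the two bounds are coupled: the same parameters $l_j,m_i\approx\left\lceil 2\pi\sqrt{n}\right\rceil$ simultaneously force the lower bound and account for the vertex count in the upper bound, which is exactly why $D(n)$ contains the terms $\left\lceil\sqrt{4\pi^2n-1}\right\rceil$ and $\left\lceil\sqrt{4\pi^2n-1/4}\right\rceil$ (the $2\pi$ traces back to the Dehn-filling theorem behind Ishikawa--Koda). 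Your proposal decouples the two bounds, leaving both the constant $2$ in $2n$ and the precise form of $D(n)$ unexplained.
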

We will show that each $C_{n,k}$ is a cork, 
the upper bound, the lower bound and the infiniteness 
in Lemmas \ref{lem: cork}, \ref{lem: upper bound}, \ref{lem: lower bound} and \ref{lem: Casson}, 
respectively. 
Theorem \ref{thm: corks} is straightforward from these lemmas. 

The shadow-complexity $\shco(M,M')$ of a pair of $4$-manifolds $M$ and $M'$ is defined to be 
the maximum between $\shco(M)$ and $\shco(M')$. 
The special shadow-complexity $\spshco(M,M')$ is similarly defined. 
In \cite{Costantino:2006}, Costantino asked the following question: 
what is the lowest shadow-complexity/special shadow-complexity of exotic pairs 
of closed/nonclosed $4$-manifolds? 
Recently, Akbulut and Ruberman \cite{Akbulut-Ruberman:2016} 
introduced a new notion called \textit{relatively exotic}. 
A pair $(M,f)$ of a manifold $M$ with boundary and a diffeomorphism $f:\partial M\to \partial M$ 
is called a \textit{relatively exotic manifold} if $f$ extends to a self-homeomorphism of $M$ but 
does not extend to any self-diffeomorphism of $M$. 
In this paper, we conveniently call $M$ a relatively exotic manifold. 
Now we reformulate Costantino's question. 
\begin{question}
\label{question}
${}$
\begin{enumerate}
\item
What is the lowest shadow-complexity of exotic pairs of closed $4$-manifolds? 
\item
What is the lowest special shadow-complexity of exotic pairs of closed $4$-manifolds? 
\item
What is the lowest shadow-complexity of exotic pairs of $4$-manifolds with boundary? 
\item
What is the lowest special shadow-complexity of exotic pairs of $4$-manifolds with boundary? 
\item
What is the lowest shadow-complexity of relatively exotic $4$-manifolds? 
\item
What is the lowest special shadow-complexity of relatively exotic $4$-manifolds? 
\end{enumerate}
\end{question}
Costantino produced upper estimates for Question \ref{question} (1) and (4). 
More precisely, he mentioned in \cite{Costantino:2006} that $\min \shco(M,M')\leq 14$ 
for exotic pairs of closed $4$-manifolds $M$ and $M'$, 
and that $\min \spshco(M,M')\leq3$ for exotic pairs of $4$-manifolds $M$ and $M'$ with boundary. 

We give the complete answer to Question \ref{question} (3). 
\begin{theorem}
\label{thm: lowest shadow-complexity is zero}
The $4$-manifolds $W_1$ and $W_2$ with boundary 
given by the Kirby diagrams in Figure \ref{complexity zero pair} are exotic, 
and the shadow-complexity of this pair is zero. 
\end{theorem}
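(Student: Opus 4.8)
The plan is to treat the two assertions separately: that the pair is exotic, and that each member admits a shadow with no true vertices. I would begin by reading the two Kirby diagrams in Figure~\ref{complexity zero pair} and identifying the underlying construction. I expect $W_1$ and $W_2$ to differ only by regluing a contractible (Mazur-type) piece $C$ along its boundary via the nontrivial involution $\tau$ of $\partial C$; that is, $W_2$ is obtained from $W_1$ by a single cork twist. Establishing this from the diagrams is a matter of isotoping the attaching link so that the handles of the embedded cork become visible, after which the remaining handle data agree in the two pictures. Note that there is no conflict with the fact that no cork has shadow-complexity zero, since shadow-complexity is not monotone under passing to embedded submanifolds.

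For the homeomorphism I would invoke the defining property of a cork twist: the involution $\tau$ extends to a self-homeomorphism of $C$, so regluing by $\tau$ leaves the topological type unchanged and $W_1$ and $W_2$ are homeomorphic. Alternatively, one checks directly that the two diagrams yield the same intersection form, fundamental group and boundary, and appeals to Freedman's classification. The genuinely hard part is to show that they are \emph{not} diffeomorphic. Since $W_1$ and $W_2$ have nonempty boundary, the closed-manifold gauge-theoretic invariants do not apply directly; my plan is to embed each $W_i$ into a suitable closed $4$-manifold (or to cap off the common boundary with a fixed piece) so that the resulting closed manifolds $\widehat{W}_1$ and $\widehat{W}_2$ are distinguished by their Seiberg--Witten or Donaldson invariants. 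Equivalently, one exhibits an ambient manifold in which the cork $C$ is effectively twisted, so that the established non-triviality of the cork twist forces the non-diffeomorphism. This is where the argument leans most heavily on prior work, and I would arrange the capping so that a known exotic pair is reproduced.

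To prove $\shco(W_1,W_2)=0$ it suffices, since shadow-complexity is a nonnegative integer and the pair-complexity is the maximum of the two, to exhibit for each $i$ a shadow of $W_i$ carrying no true vertices. I would build these directly from the Kirby diagrams using the Turaev--Costantino thickening procedure, under which the true vertices of the resulting shadow correspond to the crossings of the attaching link; the goal is therefore to present each $W_i$ by a diagram that can be thickened without producing any true vertex, so that the shadow is a simple polyhedron whose singular set consists only of triple lines, with gleams recording the framings. The remaining work is to verify that each constructed polyhedron is genuinely a shadow, namely locally flat, simple, and a strongly deformation retract of $W_i$, and that it has no true vertices; then $\shco(W_i)=0$ for both $i$, whence $\shco(W_1,W_2)=0$. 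Combined with exotic-ness, this gives the complete answer to Question~\ref{question}(3). The main obstacle I anticipate is the non-diffeomorphism step, both because gauge theory for bounded manifolds requires an auxiliary closing-up and because one must ensure that this closing-up is compatible with, and does not obstruct, the low-complexity shadows.
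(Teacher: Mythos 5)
Your two-part division (exoticness; zero-vertex shadows) matches the paper's, and your parenthetical observation that an embedded cork does not obstruct complexity zero is correct; but both halves of your plan have concrete gaps. First, the exoticness. The pair in Figure \ref{complexity zero pair} is not produced by a cork twist: the paper's proof performs explicit Kirby moves exhibiting $W_2$ as obtained from $W_1$ by a \emph{plug} twist, namely the plug $W_{1,2}$ of Akbulut and Yasui, and then quotes \cite[Lemma 2.8 (3) for $m=1,\ n=2$]{Akbulut_Yasui:2008} for both the homeomorphism and the non-diffeomorphism. A plug is not contractible, and by definition its boundary involution does not extend to \emph{any} self-homeomorphism of the plug, so your homeomorphism argument (``the involution extends topologically, hence regluing preserves the topological type'') fails at the start; you would be searching the diagrams for a Mazur piece that is not there. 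Your fallbacks --- matching invariants plus Freedman/Boyer for the homeomorphism, and capping off into closed manifolds for the non-diffeomorphism --- could in principle be carried out, but as written they amount to the hope of rediscovering exactly the known exotic pair that the paper pins down concretely by Kirby calculus.

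Second, and more decisively, the complexity-zero claim. You reduce it to finding presentations whose attaching links project to the $2$-disk \emph{without crossings}. That is a strictly stronger property than admitting a shadow with no true vertices: in a crossingless projection every component sits as a section over an embedded circle in the disk, hence is an unknot, and the linking pattern must be realized by nesting --- so the manifold is plumbing-like. Nothing suggests that the given diagrams, which contain the clasped configuration of the plug, can be isotoped to such a form, and the paper never does so. The paper's actual argument needs no such presentation: it projects the given diagrams, crossings and all, caps the components with disks to obtain a shadow whose true vertices are the crossings, and then \emph{collapses} this polyhedron along its boundary region; the collapse propagates inward and eliminates every true vertex, leaving a (non-special) shadow with zero vertices. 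This polyhedral-collapsing step --- the technique of \cite{Naoe:2017} --- is the key idea missing from your proposal; without it (or a proof that crossingless diagrams exist) your argument for $\shco(W_i)=0$ does not go through. Finally, your worry that the gauge-theoretic capping might ``obstruct'' the low-complexity shadows is unfounded: the two halves of the statement are entirely independent of one another.
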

\begin{figure}
\includegraphics[width=84mm]{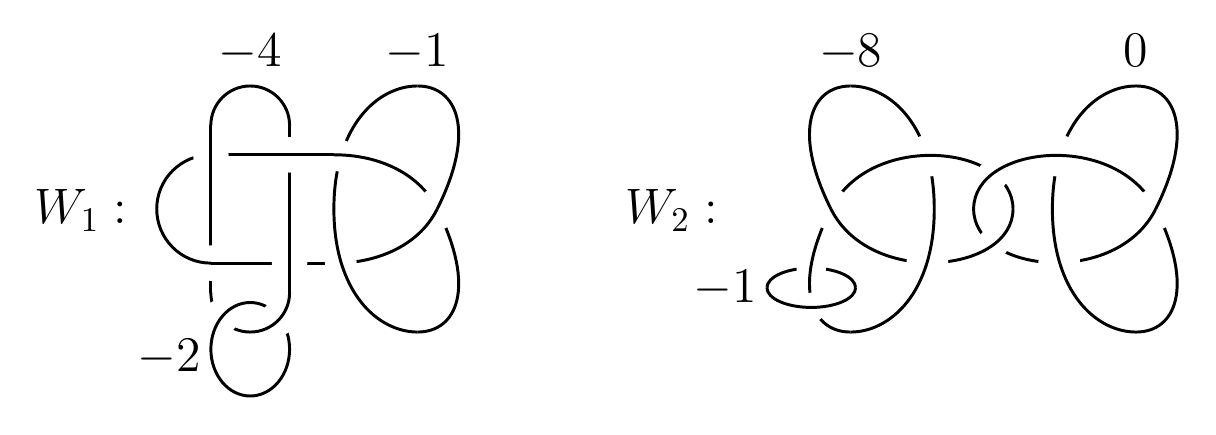}
\caption{The exotic pair having shadow-complexity zero. }
\label{complexity zero pair}
\end{figure}
We next turn to Question \ref{question} (4). 
We will investigate homeomorphism and diffeomorphism types of 
nonclosed $4$-manifolds with shadow-complexity $0$ in Proposition \ref{prop: non zero scsp}. 
This proposition and an easy discussion lead to the following. 
\begin{theorem}
\label{thm: 1 or 2}
The lowest special shadow-complexity of exotic pairs of 4-manifolds with boundary is $1$ or $2$.
\end{theorem}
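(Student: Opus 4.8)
The plan is to sandwich the quantity: to show it is at least $1$ using the classification in Proposition \ref{prop: non zero scsp}, and at most $2$ by exhibiting a concrete economical exotic pair.

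For the lower bound, I would argue that no exotic pair can consist of two manifolds that both have special shadow-complexity $0$. By Proposition \ref{prop: non zero scsp}, a nonclosed $4$-manifold $M$ with $\spshco(M)=0$ lies in a short explicit list, and on that list the homeomorphism type already determines the diffeomorphism type. Thus if $M$ and $M'$ both satisfy $\spshco=0$ and are homeomorphic, then they are diffeomorphic and cannot be exotic. Consequently any exotic pair $(M,M')$ satisfies $\spshco(M,M')=\max\{\spshco(M),\spshco(M')\}\geq 1$, so the lowest value is at least $1$. This is the easy discussion alluded to in the text preceding the statement.

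For the upper bound I would turn to an exotic pair of small complexity, the natural candidate being the pair $(W_1,W_2)$ of Theorem \ref{thm: lowest shadow-complexity is zero}: it is exotic and has ordinary shadow-complexity $0$, so each $W_i$ carries a shadow with no true vertices. Since $\shco\leq\spshco$ always (every special shadow is a shadow, so the minimum over the smaller family can only be larger), the lower bound already forces these true-vertex-free shadows not to be special. The task is therefore to upgrade a true-vertex-free shadow $P_i$ of $W_i$ into a special one while creating at most two true vertices: I would locate the non-disk regions of $P_i$, cut them into disks by sliding arcs along the triple locus, and keep track of the gleams and of the embedding so that the modified polyhedron remains a shadow of the same smooth $W_i$. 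Verifying that two cuts suffice for each $i$ gives $\spshco(W_i)\leq 2$ and hence $\spshco(W_1,W_2)\leq 2$.

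Combining the two estimates yields $1\leq\min\spshco(M,M')\leq 2$ over all exotic pairs of $4$-manifolds with boundary, which is the assertion (we do not expect to decide between $1$ and $2$, since no exotic pair realizing $\spshco=1$ is at hand). The main obstacle is the upper bound: one must produce a genuinely cheap special shadow, controlling the number of true vertices introduced when disking the non-disk regions and checking that the gleams and the locally flat embedding are preserved so that the smooth type is unchanged. The lower bound, by contrast, is immediate once the classification of Proposition \ref{prop: non zero scsp} is in hand.
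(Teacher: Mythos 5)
Your lower bound is essentially the paper's own argument: a $4$-manifold with special shadow-complexity $0$ is obtained by Turaev reconstruction from one of the three vertex-free special polyhedra $X_1,X_2,X_3$, Proposition \ref{prop: non zero scsp} shows that for these manifolds homeomorphism implies diffeomorphism (together with the easy cross-type comparisons, e.g.\ via $b_2$ and the boundary, which both you and the paper leave implicit), so no exotic pair can have both members of special shadow-complexity $0$. The upper bound, however, is where your proposal has a genuine gap, and it is also where you diverge from the paper. The paper does not use the pair $(W_1,W_2)$ for this at all: it takes Akbulut's exotic pair $(M_1,M_2)$ of Figure \ref{Akbulut's pair}, quotes Costantino's estimates $\spshco(M_1)\leq 1$ and $\spshco(M_2)\leq 3$, and then improves the latter to $\spshco(M_2)\leq 2$ by a short explicit Kirby calculus (moves (a) and (b) in that figure), so that $\spshco(M_1,M_2)\leq 2$ is read off directly from the resulting diagram.

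Your plan instead hinges on the claim that each $W_i$ admits a special shadow with at most two true vertices, and this is precisely the step you do not carry out ("verifying that two cuts suffice"). Two concrete problems. First, "cutting non-disk regions into disks by sliding arcs along the triple locus" is not a legitimate operation on simple polyhedra: an arc subdividing a region must itself become a triple line, which forces a third sheet to be attached along it, so the available specializing moves are things like the $(0\to 2)$-move, each application of which creates two true vertices (this is exactly what the paper does in Lemma \ref{lem: upper bound of exotic}). If specializing either $W_i$ required two such moves, your bound would already be $4$, not $2$. Second, the remark following the proof of Theorem \ref{thm: lowest shadow-complexity is zero} shows that $\spshco(W_1,W_2)\geq 2$, because $b_2(W_i)=3$ while special polyhedra with at most one true vertex have $b_2\leq 2$; so your route can at best give $2$, and whether $(W_1,W_2)$ actually realizes $2$ is left open by the paper --- it is exactly the verification you skip, and there is no evidence in the paper that it can be done. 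As written, the upper-bound half of your proof is an unproved assertion; it should be replaced either by the paper's argument via Akbulut's pair or by an actual construction of special shadows of $W_1$ and $W_2$ with two true vertices, with the embedding and gleams checked.
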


Of course, a cork is a relatively exotic $4$-manifold 
by definition (see Definition \ref{def:cork}). 
The answers of Question \ref{question} (5) and (6) are at most $1$ 
since there are corks whose shadow-complexity and special shadow-complexity are $1$. 
An \textit{anticork} is also known as a relatively exotic $4$-manifold 
\cite{Akbulut:1991-2, Akbulut:2016}, 
in which one example of anticork was introduced. 
We can check that its shadow-complexity and special shadow-complexity are also $1$ 
by constructing a shadow and computing the hyperbolic volume of the boundary 
with the computer program SnapPy \cite{SnapPy}. 
One might wonder whether relatively exotic $4$-manifolds always have 
(special) shadow-complexity at least $1$. 
We ask the following. 
\begin{question}
Does there exist a relatively exotic $4$-manifold whose boundary has no hyperbolic pieces? 
\end{question}
We note that a $3$-manifold has (special) shadow-complexity zero 
if and only if it is a graph manifold \cite[Proposition 3.31]{Costantino_Thurston:2008}. 

As a consequence of Theorem \ref{thm: corks}, 
we have a result about exotic pairs having large shadow-complexity. 
\begin{corollary}
\label{cor: exotic pair}
For any positive integer $n$, there exists an infinite family $\{ (W_k,W'_k)\}_{k=1}^{\infty}$ 
of exotic pairs of $4$-manifolds with boundary such that 
\[
2n\leq \spshco(C_{n,k})\leq D(n)+2, 
\]
where $D(n)$ is the function in Theorem \ref{thm: corks}. 
\end{corollary}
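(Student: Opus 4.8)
The plan is to promote the relatively exotic corks $C_{n,k}$ into honest exotic pairs by the cork-twisting construction of Akbulut and Yasui \cite{Akbulut_Yasui:2008}, and then to read off both bounds from a single shadow of the cork. Recall that each $C_{n,k}$ carries a boundary involution $\tau_{n,k}$ which extends to a self-homeomorphism but not to a self-diffeomorphism. First I would select a framed knot $(K,f)$ in $\partial C_{n,k}$ for which $K$ and $\tau_{n,k}(K)$ are not smoothly isotopic, and attach a $2$-handle $h^2$ along it to form
\[
W_k := C_{n,k}\cup_{(K,f)} h^2,
\qquad
W'_k := C_{n,k}\cup_{(\tau_{n,k}(K),f)} h^2.
\]
Since $\tau_{n,k}$ extends to a homeomorphism of $C_{n,k}$ carrying $K$ to $\tau_{n,k}(K)$, the two $4$-manifolds are homeomorphic, while the cork property obstructs a diffeomorphism; thus $(W_k,W'_k)$ is an exotic pair. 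The infiniteness of the family in $k$ is inherited from Lemma \ref{lem: Casson}, since the boundaries of the $C_{n,k}$ are already pairwise non-diffeomorphic.

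For the upper bound I would start from the special shadow $P_{n,k}$ of $C_{n,k}$ produced in Lemma \ref{lem: upper bound}, which has at most $D(n)$ true vertices and only disk regions. Attaching the $2$-handle corresponds to gluing a single disk to $P_{n,k}$ along a loop representing $K$, with the gleam of the new disk recording the framing $f$. I would isotope $K$ so that it lies in the interiors of the regions of $P_{n,k}$ and meets the singular set transversally in as few points as possible; each such intersection produces one new true vertex, and the framing is absorbed into the gleams without creating further ones. Arranging $K$ to cross the singular set at most twice, and noting that an arc of $K$ splits a disk region into disk regions, yields a \emph{special} shadow of $W_k$ with at most $D(n)+2$ true vertices. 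The identical count applies to $W'_k$, so $\spshco(W_k,W'_k)\le D(n)+2$.

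The lower bound is where the real work lies, and I expect it to be the main obstacle. I would argue through the boundary, using $\spshco(W_k)\ge\shco(W_k)\ge\shco(\partial W_k)$ together with the Costantino--Thurston estimate \cite{Costantino_Thurston:2008} relating $\shco$ to the Gromov norm $\|\cdot\|$. Since the Gromov norm of a closed oriented $3$-manifold is the total volume of its hyperbolic JSJ pieces (normalized by the volume of the regular ideal tetrahedron), a surgery supported inside a graph-manifold piece leaves the hyperbolic pieces untouched. The delicate point is therefore to place $K$ inside a Seifert-fibered piece of $\partial C_{n,k}$, disjoint from the hyperbolic pieces, while keeping $K$ and $\tau_{n,k}(K)$ smoothly non-isotopic; these two requirements pull in opposite directions, since the first pushes $K$ into a simple region and the second forces it to detect the exotic structure. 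Granting such a choice, the hyperbolic pieces survive in $\partial W_k$, so $\|\partial W_k\|\ge\|\partial C_{n,k}\|$, and the very chain of inequalities proving $\spshco(C_{n,k})\ge 2n$ in Lemma \ref{lem: lower bound} applies verbatim to $\partial W_k$ to give $\spshco(W_k)\ge 2n$. As $\partial W_k\cong\partial W'_k$ via $\tau_{n,k}$, the same bound holds for $W'_k$, and combining the two inequalities completes the proof.
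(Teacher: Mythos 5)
Your construction never gets past the exoticness step. From the hypotheses that $K$ and $\tau_{n,k}(K)$ are not smoothly isotopic and that $\tau_{n,k}$ does not extend to a diffeomorphism of $C_{n,k}$, you conclude that $W_k$ and $W'_k$ are not diffeomorphic; this deduction is invalid. The cork property only obstructs extensions of $\tau_{n,k}$ itself: it says nothing about a diffeomorphism $W_k\to W'_k$ that does not restrict to $\tau_{n,k}$ on the boundary, and $2$-handle attachments along non-isotopic knots can perfectly well produce diffeomorphic $4$-manifolds. Non-diffeomorphism needs an invariant. This is exactly why the paper takes the specific Akbulut--Matveyev configuration: $W_{n,k}$ and $W'_{n,k}$ are obtained by attaching a $(-1)$-framed $2$-handle along a meridian of the dotted circle, respectively of the $0$-framed circle; a handle slide (Figure \ref{W2}) exhibits $W'_{n,k}\cong Y\#\mCP$, which contains a smoothly embedded $(-1)$-sphere, while condition (3) of Proposition \ref{prop:cork of Mazur type} makes $W_{n,k}$ Stein, so the adjunction inequality excludes such a sphere there. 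Since your $K$ is never pinned down, neither the exoticness claim nor your upper-bound count (that $K$ projects onto the special shadow of $C_{n,k}$ with no crossings and at most two intersections with the singular set) can be checked; the ``tension'' you acknowledge between the two requirements is precisely the unresolved content of the proof.

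The lower bound fails outright. Your plan requires placing $K$ in a Seifert-fibered JSJ piece of $\partial C_{n,k}$ disjoint from the hyperbolic pieces, but $\partial C_{n,k}$ is itself a hyperbolic $3$-manifold---this is how Lemma \ref{lem: lower bound} is proved via Theorem \ref{thm:Ishikawa-Koda}, and it is recorded in Remark \ref{rmk:primeness of C_{n,k}}---so its JSJ decomposition is a single hyperbolic piece with no Seifert pieces, and the required placement of $K$ does not exist. Even granting surviving hyperbolic pieces, the inequality $\shco(N)\geq\frac{v_{\text{tet}}}{2v_{\text{oct}}}\|N\|$ is numerically too weak to recover the sharp value $2n$: Lemma \ref{lem: lower bound} does not obtain $2n$ from the Gromov norm but from Theorem \ref{thm:Ishikawa-Koda} applied to an explicit special shadow $Q''$ with $2n$ true vertices and long filling slopes. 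The paper's Lemma \ref{lem: lower bound of exotic} sidesteps your difficulty entirely: $\partial W_{n,k}\cong\partial W'_{n,k}$ is Dehn surgery on the same link of Figure \ref{2-bridge2}-(a), now with coefficients $1$ and $0$, so the same polyhedron $Q''$ is a special shadow of this boundary after changing the single gleam $\gl(R_1)$ to $1-\sum_{j=1}^{n+1}l_j$; the hypothesis of Theorem \ref{thm:Ishikawa-Koda} is still satisfied, whence $\spshco(\partial W_{n,k})=2n$, and the bound for the pair follows because any special shadow of a $4$-manifold is a special shadow of its boundary.
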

\subsection*{Acknowledgements}
The author would like to thank his supervisor Masaharu Ishikawa for 
his encouragement and useful comments. 
The author would also like to thank Daniel Ruberman for valuable private communications and 
letting him know examples of corks mentioned in Remark \ref{rmk:primeness of C_{n,k}}. 
He is supported by JSPS KAKENHI Grant Number 17J02915. 

\section{Preliminaries}
Throughout this paper, we assume that any manifold is compact, oriented and smooth, and 
that any map is also smooth. 

\subsection{Corks}
A notion of cork has been known since 1991 by Akbulut \cite{Akbulut:1991}. 
It is shown that a cork appears for any exotic pair of closed and simply-connected 
$4$-manifolds by Matveyev \cite{Matveyev:1996} 
and by Curtis, Freedman, Hsiang and Stong \cite{Curtis_Freedman_Hsiang_Stong:1996}. 
The following is the definition of a cork. 
\begin{definition}
\label{def:cork}
Let $C$ be a contractible $4$-manifold and $f$ be an involution on $\partial C$. 
If $f$ can not extend to any self-diffeomorphism on $C$, then $(C,f)$, or simply $C$, is 
called a \textit{cork}. 
\end{definition}
\begin{remark}
It had been assumed that $f$ can extend a self-homeomorphism on $C$, 
but this condition is always satisfied by being contractible (c.f. \cite{Boyer:1986}). 
A cork is assumed to be a compact Stein surface in several papers, 
for example \cite{Akbulut_Yasui:2008}, though this assumption is excluded in recent papers. 
\end{remark}

A useful criterion to detect a cork by using a Kirby diagram is known as follows. 
\begin{proposition}
[Akbulut and Matveyev \cite{Akbulut_Matveyev:1997}]
\label{prop:cork of Mazur type}
Let $C$ be a $4$-manifold that admits a Kirby diagram consisting of a dotted unknot $K_1$ 
and a $0$-framed unknot $K_2$. Then $C$ is a cork if the following hold: 
\begin{enumerate}
\item
the link $K_1\sqcup K_2$ is symmetric, 
that is, the components $K_1$ and $K_2$ are exchanged by an isotopy in $S^3$; 
\item
the linking number of $K_1$ and $K_2$ is $\pm1$; 
\item
after exchanging the notation of $1$-handle to the ball notation, 
$K_2$ can be placed in the Legendrian position with respect to the standard contact structure 
on $\partial(B^4\cup1\text{-handle})\cong S^1\times S^2$ 
so that its Thurston-Bennequin number is at least $1$. 
\end{enumerate}
\end{proposition}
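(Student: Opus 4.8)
The goal is to verify the two defining properties of a cork for $C$: that $C$ is contractible, and that the given involution $f$ on $\partial C$ does not extend to a self-diffeomorphism of $C$. First I would settle contractibility by handle calculus. The diagram has a single $0$-handle, the $1$-handle recorded by the dotted unknot $K_1$, and the $2$-handle along the $0$-framed unknot $K_2$, with no handles of index $\geq 3$. Condition (2) makes the $2$-handle kill the generator of $H_1$ coming from the $1$-handle, so $H_1(C)=0$, and since the attaching circle of the $2$-handle runs over the $1$-handle its core is not a cycle, giving $H_2(C)=0$. As these diagrams realize Mazur-type manifolds the attaching word of $K_2$ normally generates $\pi_1$, so $\pi_1(C)=1$ and $C$ is contractible. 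By Freedman's theorem $C$ is homeomorphic to $B^4$, and an involution of the boundary of a contractible $4$-manifold always extends to a self-homeomorphism (see \cite{Boyer:1986}); this is why the homeomorphism-extension clause may be dropped from Definition \ref{def:cork}.

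Next I would pin down the involution and a compatible contact structure. Both a dotted unknot and a $0$-framed unknot cap the $4$-ball off to a manifold with boundary $S^1\times S^2$, and the passage from the dotted to the $0$-framed description is a self-diffeomorphism of $S^1\times S^2$ interchanging the roles of meridian and longitude. Combining this ``dot--zero swap'' with the ambient isotopy of condition (1) that exchanges $K_1$ and $K_2$ produces a self-diffeomorphism $f$ of $\partial C$ which exchanges the two handle curves; it is an involution because performing the swap twice returns the original diagram. Condition (3) is exactly the Eliashberg--Gompf Stein criterion: after Legendrian stabilization I may assume the Thurston--Bennequin number of $K_2$ equals $1$, so the $0$-framed $2$-handle is attached with framing $\mathrm{tb}(K_2)-1$, and $C$ becomes a Stein domain. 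Hence $\partial C$ inherits a tight, Stein-fillable contact structure.

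The heart of the argument, and the step I expect to be the main obstacle, is to show that $f$ does \emph{not} extend to a self-diffeomorphism of $C$. The strategy is to use the Stein filling to embed $C$ into a closed $4$-manifold $X$ whose Seiberg--Witten (or Donaldson) invariants detect the gluing: the symmetry makes $f$ the map that swaps the dotted and $0$-framed curves, so regluing $C$ by $f$ replaces $X$ by the manifold $X'$ coming from the ``dual'' handle picture, and the Stein/symplectic structure forces $X$ and $X'$ to have different gauge-theoretic invariants, hence to be nondiffeomorphic. If $f$ extended to a diffeomorphism $F$ of $C$, then cutting out $C$ and regluing by $f$ would be realized by an ambient diffeomorphism built from $F$ and the identity on $X\setminus \Int C$, giving $X\cong X'$, a contradiction. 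The delicate points are choosing the embedding so that the relevant invariant is nonzero and verifying that the dot--zero swap genuinely alters it; this is where conditions (1) and (3) are used jointly, and it is the only place gauge theory enters. I would follow the construction of Akbulut and Matveyev \cite{Akbulut_Matveyev:1997} to carry this out.
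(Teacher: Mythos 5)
First, a structural point: the paper does not prove Proposition \ref{prop:cork of Mazur type} at all. The statement is quoted from Akbulut--Matveyev, and the text explicitly refers the reader to Akbulut and Yasui \cite{Akbulut_Yasui:2008} for the proof, adding only the remark that condition (2) yields contractibility and that conditions (1) and (3) yield the boundary involution, described in the diagram as the exchange of $\bullet$ and $0$. So your proposal must be measured against that standard argument. Your first two paragraphs are consistent with it: (2) gives $H_*(C)\cong H_*(B^4)$ and kills $\pi_1$ (the single relator has exponent sum $\pm1$ in a free group of rank one, so no appeal to ``normal generation'' is needed), the involution is the dot--zero swap combined with the isotopy of (1), and (3) makes $C$ Stein after stabilizing $K_2$ down to $tb=1$ so that the framing $0$ equals $tb-1$. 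One sentence is wrong, however: a contractible $4$-manifold $C$ with $\partial C\ne S^3$ is \emph{not} homeomorphic to $B^4$ (the corks in this paper have hyperbolic, in particular non-simply-connected, boundary); the correct and sufficient statement is the one you also give, namely that by \cite{Boyer:1986} any self-homeomorphism of the boundary of a contractible $4$-manifold extends topologically.

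The genuine gap is your third paragraph, which is the entire content of the proposition. You propose to embed $C$ in a closed $4$-manifold $X$ whose Seiberg--Witten invariants change under the cork twist, but you construct no such $X$, give no reason such an $X$ exists for an \emph{arbitrary} $C$ satisfying (1)--(3), and close by deferring to Akbulut--Matveyev; as a proof this is circular, since producing a closed pair $(X,X')$ distinguished by gauge theory is at least as strong as the statement being proved. The standard argument needs no closed manifold: let $\mu_1,\mu_2\subset\partial C$ be meridians of $K_1$ and $K_2$, which are exchanged by $f$, and attach a $(-1)$-framed $2$-handle along each, obtaining compact manifolds $V_1$ and $V_2$. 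If $f$ extended to a self-diffeomorphism of $C$, then $V_1\cong V_2$. But $V_1$ (the handle on the meridian of the dotted circle) can be given a Stein structure using $tb(K_2)\ge1$, hence embeds in a minimal K\"ahler surface of general type by Lisca--Mati\'c and therefore contains no smoothly embedded homologically essential sphere of square $-1$, whereas blowing down $\mu_2$ exhibits $V_2\cong Y\#\mCP$, which visibly contains one. This contradiction is where condition (3) does its real work, and it is exactly the mechanism the present paper reuses in Section 4, where $W_{n,k}$ and $W'_{n,k}$ are built from $C_{n,k}$ by these same two meridian attachments.
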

Such a cork is called a \textit{cork of Mazur type}. 
We refer the reader to Akbulut and Yasui \cite{Akbulut_Yasui:2008} for this proof. 
Moreover, they in \cite{Akbulut_Yasui:2013} showed that 
infinite many mutually exotic $4$-manifolds can be obtained from a cork of Mazur type. 
We note that (2) implies that $C$ is contractible, 
and (1) and (3) imply that there is an involution on $\partial C$ 
which is necessary for $C$ to be a cork. 
The involution is described in the Kirby diagram by the exchange of $\bullet$ and $0$. 
This carries out a surgery $S^1\times B^3$ to $D^2\times S^2$, 
and then does a surgery the other $D^2\times S^2$ to $S^1\times B^3$. 

\subsection{Shadows}
If each point of a compact space $X$ has a neighborhood homeomorphic to one of (i)-(v) 
in Figure \ref{local_model}, then $X$ is called a \textit{simple polyhedron}. 
The set of points of type (ii), (iii) and (v) is called the \textit{singular set} of $X$ 
and denoted by $\Sing(X)$. 
A point of type (iii) is a \textit{true vertex}, 
and each connected component of $\Sing(X)$ 
with true vertices removed is called a \textit{triple line}. 
Each connected component of $X\setminus\Sing(X)$ is called a \textit{region}. 
Hence a region consists of points of type (i) or (iv). 
A region is called an \textit{internal region} if it contains no points of type (iv), 
and a \textit{boundary region} otherwise. 
The \textit{boundary} of $X$, denoted by $\partial X$, 
is defined as the set of points of type (iv) and (v). 
A simple polyhedron is said to be \textit{special} if its regions are open disks. 
\begin{figure}[h]
\includegraphics[width=100mm]{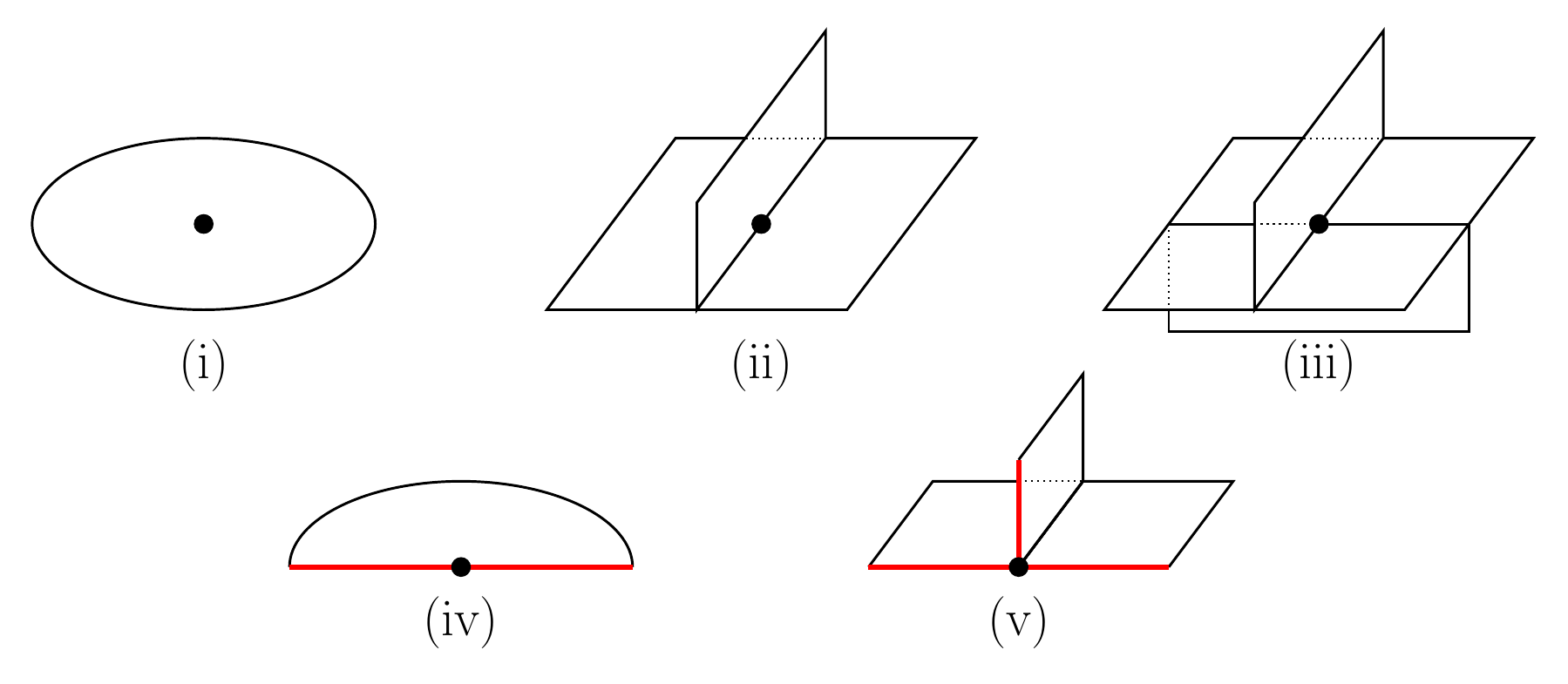}
\caption{The local models of a simple polyhedron.}
\label{local_model}
\end{figure}
\begin{definition}
Let $M$ be a $4$-manifold with boundary, and $X\subset M$ be a simple polyhedron 
that is proper and locally flat in $M$. 
If $X$ is a strongly deformation retract of $M$, 
then the polyhedron $X$ is called a \textit{shadow} of $M$. 
If $X$ is special, then it is called a \textit{special shadow}. 
\end{definition}
We remark that $X$ is said to be locally flat in $M$ 
if there is a local chart $(U,\varphi)$ around each point $x\in X$ 
such that $\varphi(U\cap X)$ is contained in $\R^3\subset\R^4=\varphi(U)$.
It is easy to see that any handlebody consisting of $0$-, $1$- and $2$-handles 
admits a shadow \cite{Turaev:1994, Costantino:2005}. 

\subsection{Gleams}
For any simple polyhedron $X$, 
one can define the \textit{$\Z_2$-gleam} on each internal region. 
Let $R$ be an internal region, and $i:F\to X$ be a continuous map 
extended from the inclusion of $R$, 
where $F$ is a compact surface whose interior is homeomorphic to $R$. 
Note that the restriction $i|_{\Int(F)}$ coincides with the inclusion of $R$, 
and that $i(\partial F)\subset\Sing(X)$. 
We now see that there exists a local homeomorphism $\tilde{i}:\tilde{F}\to X$ 
such that its image is a neighborhood of $i(F)$ in $X$, 
where $\tilde{F}$ is a simple polyhedron obtained from $F$ by 
attaching an annulus or a \Mobius strip along its core circle to each boundary component of $F$. 
Note that $\tilde{F}$ is determined up to homeomorphism from the topology of $X$. 
Here the $\Z_2$-gleam $\gl_2(R)$ of $R$ is defined to be $0$ 
if the number of the attached \Mobius strips is even, and $1$ otherwise. 
\begin{definition}
A simple polyhedron $X$ is called a \textit{shadowed polyhedron} 
if each internal region $R$ is equipped with a half integer $\gl(R)$ such that 
$\gl(R)-\frac{1}{2}\gl_2(R)$ is an integer. 
The half integer $\gl(R)$ is called a \textit{gleam} on $R$. 
\end{definition}
\begin{theorem}
[Turaev, \cite{Turaev:1994}]
\begin{enumerate}
\item 
There exists a canonical way to construct a $4$-manifold $M_X$
from a given shadowed polyhedron $X$ such that $X$ is a shadow of $M_X$. 
This construction provides a smooth structure on $M_X$ uniquely. 
\item
Let $M$ be a $4$-manifold admitting a shadow $X$. 
Then there exist gleams on internal regions of $X$ 
such that $M$ is diffeomorphic to the $4$-manifold constructed from the shadowed polyhedron 
according to the way of (1). 
\end{enumerate}
\end{theorem}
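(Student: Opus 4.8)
The plan is to establish both parts by a local-to-global thickening argument: build $M_X$ out of canonical $4$-dimensional thickenings of the local models (i)--(v) in Figure~\ref{local_model}, and use the gleams to dictate how these pieces are glued together.

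For part (1), I would thicken $X$ region by region. Over the interior of a region $R$---an honest surface---I take an oriented $D^2$-bundle, and arrange the construction so that $\gl(R)$ records its relative Euler number. The local models then prescribe the assembly: over a point of type (i) the thickening is the product model $\R^2\times\R^2$; along a triple line (type (ii)) three such $D^2$-thickenings are glued along the standard $4$-dimensional model of three half-planes sharing an edge; at a true vertex (type (iii)), whose model is the cone over the $1$-skeleton of a tetrahedron, one uses the canonical thickening in which six region-germs and four triple lines fit together; and near boundary points (types (iv) and (v)) the thickening is chosen so that $X$ sits in $M_X$ properly and locally flatly. Since each local model carries a canonical smoothing of its corners, unique up to isotopy, the resulting smooth structure on $M_X$ is well defined, which gives the uniqueness assertion.

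The delicate point, and the main obstacle, is the framing bookkeeping carried by the gleams. The Euler number of the $D^2$-bundle over $R$ is only meaningful relative to a framing of the normal direction along $\partial R\subset\Sing(X)$, so one must check that the framings induced by the three regions incident to a triple line agree, and likewise that everything closes up consistently around each true vertex; the true-vertex case is the most involved, since six region-corners meet there. The role of the $\Z_2$-gleam is precisely to measure the obstruction to trivializing the normal $I$-bundle of $R$: an odd number of attached \Mobius strips forces the relative Euler number to be a half-integer, which is exactly the content of the requirement that $\gl(R)-\tfrac{1}{2}\gl_2(R)$ be an integer. I would verify this compatibility model by model.

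For part (2), I would use that $X$ is a strong deformation retract of $M$, so that $M$ is diffeomorphic to a regular neighborhood of $X$. Local flatness of the embedding $X\subset M$ provides a genuine normal $D^2$-bundle over each region, and reading off the relative Euler number of each such bundle defines a gleam on every internal region; the compatibility checked in part (1) guarantees that these satisfy the $\Z_2$-gleam constraint. Feeding these gleams into the construction of part (1) then reproduces the regular neighborhood, hence $M$ itself up to diffeomorphism.
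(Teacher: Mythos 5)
The paper does not prove this statement at all: it is quoted as background, attributed to Turaev \cite{Turaev:1994}, and used as a black box (``Turaev's reconstruction''). So there is no internal proof to compare yours against; the relevant comparison is with the argument in the cited book, and your sketch does follow that same route --- thicken the local models (i)--(v) of Figure \ref{local_model}, let the gleam of an internal region record the relative Euler number of its $D^2$-thickening, interpret $\gl_2$ as the obstruction to trivializing the normal structure along $\partial R$, and, for the converse, read these data off a locally flat embedding.

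As a proof, however, your text has two genuine gaps rather than omissions of routine detail. First, in part (1) the entire content of the theorem is exactly the verification you defer: that the framings induced by the three sheets along a triple line, and by the six region-germs at a true vertex, close up coherently; that the congruence $\gl(R)-\tfrac{1}{2}\gl_2(R)\in\Z$ is precisely the condition making the gluing possible; and that the result is independent of the order of gluing, the isotopies, and the corner smoothings (this independence is the ``uniqueness'' assertion). Writing ``I would verify this compatibility model by model'' leaves the heart of part (1) unproved. Second, in part (2) you pass from ``$X$ is a strong deformation retract of $M$'' directly to ``$M$ is diffeomorphic to a regular neighborhood of $X$.'' That implication is not formal: a strong deformation retract is weaker than a collapse, so regular-neighborhood theory does not apply off the shelf, and in dimension $4$ you cannot invoke the h-cobordism theorem to identify $M$ with $\Nbd(X)$ --- a priori the complement $M\setminus \Int\,\Nbd(X)$ is only a cobordism that looks like a product homologically. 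This is exactly where the hypotheses that $X$ is properly embedded, locally flat, and two-dimensional must do real work (note, for instance, that a contractible $4$-manifold strong deformation retracts to a point, yet is not a ball in general; it is the simple-polyhedron structure of $X$ that rules such examples out). As written, your step (2) assumes the statement that actually needs proof.
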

The construction in (1) is called Turaev's reconstruction. 
A gleam plays a role as a framing coefficient to attach a $2$-handle 
in the original proof of Turaev's reconstruction. 
It is also regarded as a generalized Euler number of an embedded surface in a $4$-manifold. 
In the case where a $4$-manifold is a $D^2$-bundle over a surface $F$, 
the $4$-manifold has a shadow $F$, and the Euler number of $F$ coincides with 
the gleam coming from the above theorem. 

Now we introduce a way to calculate gleams from link projection. 
Let $H$ be a $4$-dimensional handlebody consisting of $0$-handles and $1$-handles, 
and $M$ be a $4$-manifold obtained from $H$ 
by attaching $2$-handles along a framed link $L=L_1\cup\cdots\cup L_n$ in $\partial H$. 
Let $Y$ be a shadow of $H$ such that the gleams of regions of $Y$ are all $0$. 
We then project $L$ onto $Y$ in a regular position. 
Here $\pi$ denotes the projection. 
By attaching a $2$-disk $D_i$ to $\pi(L_i)$ for each $i\in\{1,\ldots,n\}$, 
we obtain a new simple polyhedron $X$, which is a shadow of $M$. 
The regions of this polyhedron other than the $2$-disks $D_1,\ldots,D_n$ are subsets of $Y$. 
\begin{itemize}
\item 
The gleam of $D_i$ coincides with the framing of $L_i$ with respect to the one induced from $Y$. 
More precisely, it is given as follows. 
Let $L'_i$ denote the framing of $L_i$. 
We may assume that 
the image of $L'_i$ under $\pi$ is parallel to $\pi(L_i)$ on $Y$ 
except for an arc $\alpha_i\subset L'_i$, 
and $\alpha_i$ is sent so that its image has normal crossings with $\pi(L_i)$. 
We assign an over/under information at each crossing point. 
Each has a sign canonically. 
Then the gleam of $D_i$ is given as the half of the total number of the positive crossings 
minus the total number of the negative ones. 
\item
Let $R$ be an internal region contained in $Y\subset X$. 
Then $R$ might be adjacent to some crossing points of the link projection, or 
intersection points of $\Sing(Y)$ and the link projection as shown in Figure \ref{local_gleam}. 
Around these points, 
we provide local contributions to the gleam on $R$ as shown in Figure \ref{local_gleam}, 
and the gleam $\gl(R)$ is given as the sum of them. 
If $R$ is not adjacent to a point as above, then $\gl(R)$ is zero. 
\end{itemize}
\begin{figure}
\includegraphics[width=83mm]{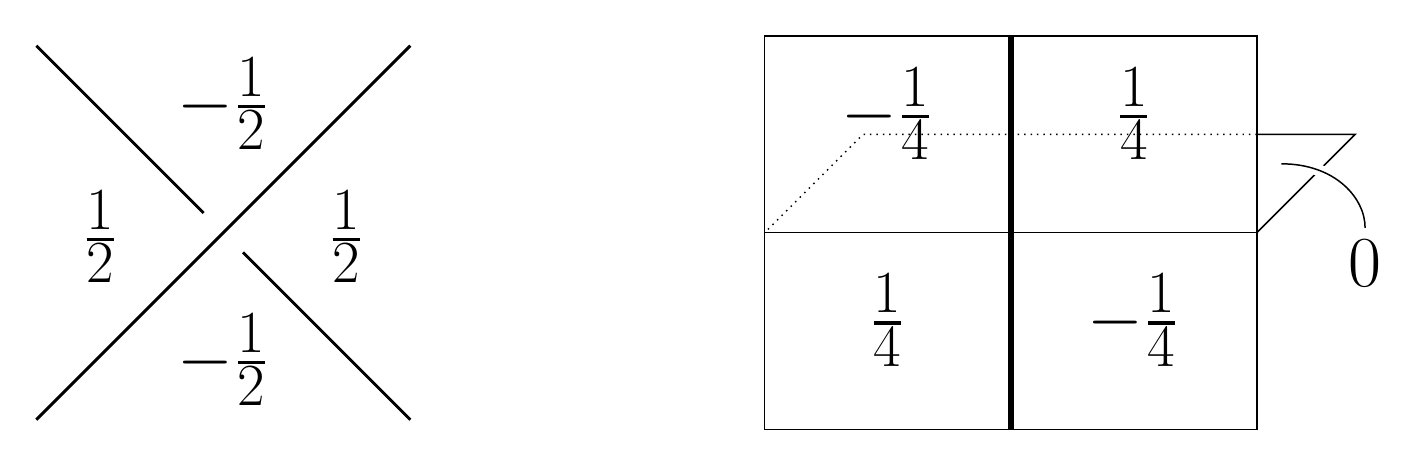}
\caption{The local contribution to gleam. 
The left part indicates a crossing point of the link projection, 
and the right part does an intersection point of $\Sing(Y)$ and the link projection. } 
\label{local_gleam}
\end{figure}

\subsection{Shadow-complexity}
The shadow-complexity was first introduced by Costantino in \cite{Costantino:2006}. 
He studied the shadow-complexity of closed $4$-manifolds. 
Note that a shadow of a closed $4$-manifold $M$ is defined 
as a shadow of 
the union of $0$-, $1$- and $2$-handles in a handle decomposition of $M$. 
Recall that a shadow of a $3$-manifold $N$ is a shadow of a $4$-manifold bounded by $N$. 
\begin{definition}
Let $M$ be a $3$- or $4$-manifold having a shadow. 
Then the (\textit{special}) \textit{shadow-complexity} of $M$ is the minimal number 
of true vertices of a (special) shadow of $M$. 
Let $\shco(M)$ and $\spshco(M)$ denote the shadow-complexity of $M$ 
and the special one, respectively. 
\end{definition}
Costantino \cite{Costantino:2006} 
showed that the special shadow-complexity of a closed $4$-manifold is $0$ 
if and only if it is diffeomorphic to one of 
$S^4$, $\CP$, $\mCP$, $S^2\times S^2$, $\CP\#\CP$, $\mCP\#\mCP$ or $\CP\#\mCP$. 
We will investigate $3$-manifolds having special shadow-complexity $0$ 
in the proof of Proposition \ref{prop: non zero scsp} 
and show that they are Seifert manifolds or connected-sums of lens spaces. 
Note that it is known that a $3$-manifold has shadow-complexity $0$ 
if and only if it is a graph manifold by Costantino and Thurston \cite{Costantino_Thurston:2008}. 
In the $4$-dimensional case, 
Martelli \cite{Martelli:2010} showed an analogue to the above result: 
a $4$-manifold $M$ has shadow-complexity $0$ if and only if 
$M$ is diffeomorphic to $M'\#_k\CP$ or $M'\#_k\mCP$ for some $k\geq0$, 
where $M'$ is a ``$4$-dimensional graph manifold'' discussed in his paper. 

As mentioned in the introduction, Costantino and Thurston 
\cite[Theorems 3.37 and 5.5]{Costantino_Thurston:2008} discovered 
a splendid relation between shadow-complexity and geometry of $3$-manifolds: 
There is a constant $C$ such that 
\[
\frac{v_{\text{tet}}}{2v_{\text{oct}}}\| N\|\leq \shco(N)\leq C\| N\|^2
\]
for any geometric $3$-manifold $N$. 
Moreover, the first inequality holds for every $3$-manifold. 
Here $v_{\text{tet}}=1.01...$ and $v_{\text{oct}}=3.66...$ 
are the volume of regular ideal tetrahedron and octahedron, respectively, 
and $\| N\|$ denotes the Gromov norm of $N$. 
Costantino and Thurston also showed that the $3$-manifold with boundary corresponding to 
a neighborhood of the singular set of a shadow admits a hyperbolic structure. 
By using hyperbolic Dehn filling, Ishikawa and Koda \cite[Theorem 6.2]{Ishikawa_Koda:2017} 
showed the following. 
\begin{theorem}
[Ishikawa and Koda, \cite{Ishikawa_Koda:2017}]
\label{thm:Ishikawa-Koda}
Let $X$ be a special shadow of a $3$-manifold $N$, 
and $X$ has $V$ true vertices. 
For a region $R$, let $v(R)$ denote the number of true vertices 
adjacent to $R$ counted with multiplicity. 
If $\sqrt{4\gl(R)^2+v(R)^2}>2\pi\sqrt{2V}$ for each region $R$, then $\spshco(N)=V$. 
\end{theorem}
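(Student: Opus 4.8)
The plan is to prove Theorem~\ref{thm:Ishikawa-Koda}, which asserts that under the stated geometric condition on each region, the special shadow $X$ realizing $V$ true vertices is in fact complexity-minimizing, so that $\spshco(N)=V$. The strategy follows the philosophy of Costantino and Thurston: a special shadow of a $3$-manifold determines a $3$-manifold with cusped boundary (the neighborhood of the singular set), and the interior of $N$ is recovered by performing Dehn fillings whose filling slopes are controlled by the gleams and the combinatorics of the regions. First I would describe the relevant cusped hyperbolic $3$-manifold $P$ associated to a neighborhood of $\Sing(X)$, establishing that it carries a complete finite-volume hyperbolic structure whose volume is bounded above in terms of $V$; the sharp bound here is $\mathrm{Vol}(P)\le 2 V \, v_{\mathrm{oct}}$, coming from the fact that each true vertex contributes at most a single ideal octahedron.

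\emph{Second step: the filling estimate.} For each internal region $R$ there is a corresponding torus cusp of $P$, and filling it along the slope prescribed by the gleam $\gl(R)$ and the adjacency data $v(R)$ recovers $N$. The crucial input is the $6$-Theorem (or Thurston's hyperbolic Dehn filling together with explicit length estimates for the filling curves): if the normalized length of every filling slope exceeds a universal threshold, the filled manifold $N$ remains hyperbolic and its volume is close to, and in particular bounded below in terms of, $\mathrm{Vol}(P)$. The quantity $\sqrt{4\gl(R)^2+v(R)^2}$ is precisely (a multiple of) the length of the filling curve on the cusp torus associated to $R$, so the hypothesis $\sqrt{4\gl(R)^2+v(R)^2}>2\pi\sqrt{2V}$ is engineered to force all the normalized lengths to lie above the required threshold. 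I would make this identification of the filling length with $\sqrt{4\gl(R)^2+v(R)^2}$ explicit, since it is the geometric heart of the statement.

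\emph{Third step: the complexity lower bound.} Once $N$ is known to be hyperbolic with $\mathrm{Vol}(N)$ close to $\mathrm{Vol}(P)$, I combine the universal lower estimate from the excerpt, namely $\shco(N)\ge \frac{v_{\mathrm{tet}}}{2v_{\mathrm{oct}}}\|N\|$, with the relation $\|N\| = \mathrm{Vol}(N)/v_{\mathrm{tet}}$ for hyperbolic $N$. The aim is to show that any competing (special) shadow with fewer than $V$ true vertices would, by the upper volume bound applied to \emph{its} associated cusped manifold, force $\mathrm{Vol}(N)$ to be strictly smaller than what the Dehn-filling estimate guarantees, yielding a contradiction. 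Arranging the constants so that the strict inequality $\sqrt{4\gl(R)^2+v(R)^2}>2\pi\sqrt{2V}$ closes the gap between the upper bound $\mathrm{Vol}(P)\le 2Vv_{\mathrm{oct}}$ and the lower bound coming from any hypothetical smaller shadow is where the specific constant $2\pi\sqrt{2V}$ must be tracked carefully.

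I expect the main obstacle to be the quantitative Dehn-filling step: one must verify that the filling curve on the cusp torus associated to $R$ genuinely has (normalized) length equal to $\sqrt{4\gl(R)^2+v(R)^2}$, which requires understanding the shape of the cusp cross-section and the homology basis in terms of meridian and longitude contributions, with the factor $4\gl(R)^2$ recording the twisting from the gleam and $v(R)^2$ recording the number of octahedra abutting the cusp. Matching this length to the $6$-Theorem threshold and then propagating the resulting volume estimate through the lower bound $\frac{v_{\mathrm{tet}}}{2 v_{\mathrm{oct}}}\|N\|\le\shco(N)$ to conclude minimality among \emph{all} special shadows — not merely the given one — is the delicate part, since it is essentially an inequality between the volume one gains from $V$ octahedra and the volume one could lose by using fewer true vertices.
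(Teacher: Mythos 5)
Note first that the paper you are comparing against does not actually prove Theorem~\ref{thm:Ishikawa-Koda}: it is imported verbatim from Ishikawa--Koda \cite{Ishikawa_Koda:2017} (Theorem~6.2 there), the paper's only original contribution being the remark that the branching hypothesis can be dropped. So your proposal has to be measured against the original proof, whose skeleton you have broadly reproduced: the cusped hyperbolic manifold $P$ associated to a neighborhood of $\Sing(X)$, the identification of $\sqrt{4\gl(R)^2+v(R)^2}$ with a slope length on the cusp torus of $R$, hyperbolic Dehn filling with a quantitative volume estimate, and a volume comparison against any hypothetical smaller special shadow. That architecture is correct.

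The gap is in the constants, and it is not cosmetic: two of the constants you commit to make the argument impossible to close. First, your bound $\mathrm{Vol}(P)\le 2Vv_{\mathrm{oct}}$ is inconsistent with your own justification (one ideal octahedron per true vertex gives $Vv_{\mathrm{oct}}$), and it points the wrong way: for the Dehn-filling step you need $\mathrm{Vol}(P)$ bounded \emph{below}, and what Costantino--Thurston actually prove is the equality $\mathrm{Vol}(P)=Vv_{\mathrm{oct}}$, because the gluing of regular ideal octahedra, one per vertex, is the complete hyperbolic structure on $P$. Second, and fatally, the lower-bound step cannot be run through the general inequality $\shco(N)\ge\frac{v_{\mathrm{tet}}}{2v_{\mathrm{oct}}}\|N\|$: since Dehn filling only loses volume, $\mathrm{Vol}(N)<\mathrm{Vol}(P)=Vv_{\mathrm{oct}}$, so that inequality can certify at best $\shco(N)>V/2-$const, never $\shco(N)\ge V$ once $V\ge2$. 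The factor $2$ must be removed, and this is precisely where speciality enters: any special shadow with $V'$ true vertices exhibits $N$ as a Dehn filling of a cusped hyperbolic manifold of volume $V'v_{\mathrm{oct}}$ (all regions are disks, so the octahedral piece has only torus cusps and no geodesic boundary), and since the Gromov norm does not increase under filling, $\|N\|\le V'v_{\mathrm{oct}}/v_{\mathrm{tet}}$, i.e.\ $\spshco(N)\ge\mathrm{Vol}(N)/v_{\mathrm{oct}}$ for hyperbolic $N$. With that corrected inequality the arithmetic closes, via the Futer--Kalfagianni--Purcell volume bound (the quantitative form of the $2\pi$-theorem you allude to): if every slope length exceeds $2\pi\sqrt{2V}$, then
\[
\mathrm{Vol}(N)\ \ge\ \Bigl(1-\bigl(2\pi/\ell_{\min}\bigr)^2\Bigr)^{3/2}\,Vv_{\mathrm{oct}}\ >\ \Bigl(1-\tfrac{1}{2V}\Bigr)^{3/2}Vv_{\mathrm{oct}}\ \ge\ \bigl(V-\tfrac{3}{4}\bigr)v_{\mathrm{oct}}\ >\ (V-1)v_{\mathrm{oct}},
\]
which contradicts the existence of any special shadow with at most $V-1$ vertices and yields $\spshco(N)=V$. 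You do gesture at exactly this contradiction (``apply the upper volume bound to the competing shadow's cusped manifold''), but with the factor-$2$ inequality and the bound $2Vv_{\mathrm{oct}}$ as stated, the required strict inequality can never be achieved, so the proposal as written does not prove the theorem.
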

Note that $\sqrt{4\gl(R)^2+v(R)^2}$ indicates the slope length of 
the Dehn filling corresponding to $R$.
\begin{remark}
In \cite[Theorem 6.2]{Ishikawa_Koda:2017}, 
a shadow $X$ is assumed to be equipped with a \textit{branching}, 
which is a choice of a suitable orientation of each region. 
Their proof works even if $X$ is not given a branching. 
Hence the theorem still holds without assuming a branching. 
\end{remark}

\section{Cork $C_{n,k}$ and lemmas}
\begin{figure}[h]
\includegraphics[width=90mm]{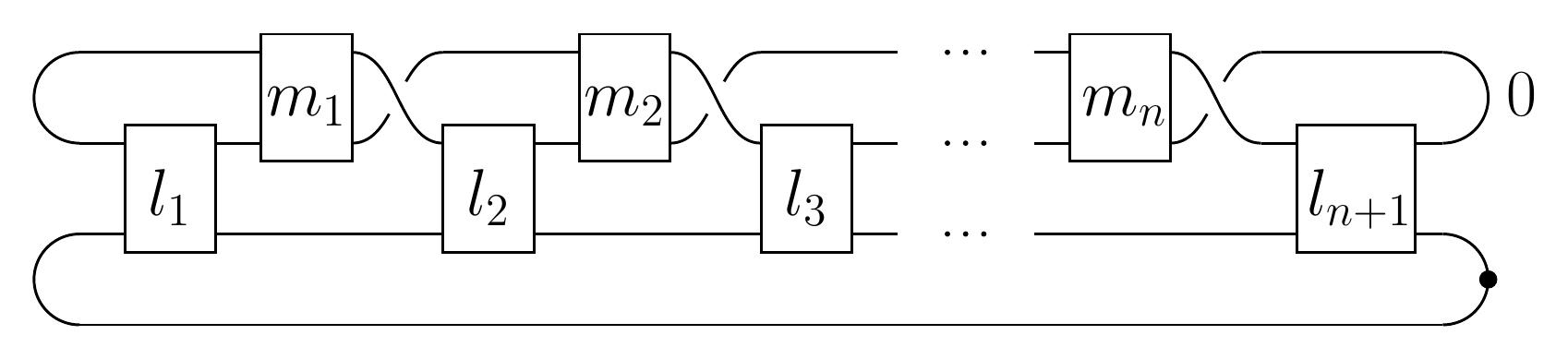}
\caption{The Kirby diagram of $C_{n,k}$}
\label{Kirby diag of Cn}
\end{figure}
We prove Theorem \ref{thm: corks} in this section. 
Let $n$ be a positive integer and $k$ be a nonnegative integer. 
For $i\in\{1,\ldots,n\}$ and $j\in\{1,\ldots,n+1\}$, 
define integers $m_i$ and $l_j$ to be
\begin{align*}
&m_i=
\begin{cases}
-\left\lceil \frac{1}{2}+\sqrt{4\pi^2n-1}\right\rceil -k& (i=1) \\[6pt]
-\left\lceil \frac{1}{2}+\sqrt{4\pi^2n-1}\right\rceil & (\mathrm{otherwise}),
\end{cases}
\intertext{and}
&l_j=
\begin{cases}
\left\lceil\sqrt{4\pi^2n-1/4}\right\rceil & (j=1) \\[6pt]
2\left\lceil\sqrt{4\pi^2n-1/4}\right\rceil-1 & (n:\mathrm{even\ and\ }j=n) \\[6pt]
\left\lceil\sqrt{4\pi^2n-1/4}\right\rceil +1 & (n:\mathrm{odd\ and\ }j=n+1) \\[6pt]
\left\lceil\sqrt{4\pi^2n-1/4}\right\rceil & (n:\mathrm{even\ and\ }j=n+1) \\[6pt]
\left\lceil\sqrt{4\pi^2n-1}\right\rceil & (\mathrm{otherwise}),
\end{cases}
\end{align*}
where $\lceil\cdot\rceil$ is the ceiling function, that is, 
$\lceil x\rceil = \min\{n\in\Z\mid x\leq n \}$ for a real number $x$. 
Let $C_{n,k}$ be a $4$-manifold given 
by the Kirby diagram shown in Figure \ref{Kirby diag of Cn}. 
We adopt a convention that 
a box with some integer $m$ in a link diagram represents $m$ full twists. 

We first show the following. 
\begin{figure}
\includegraphics[width=90mm]{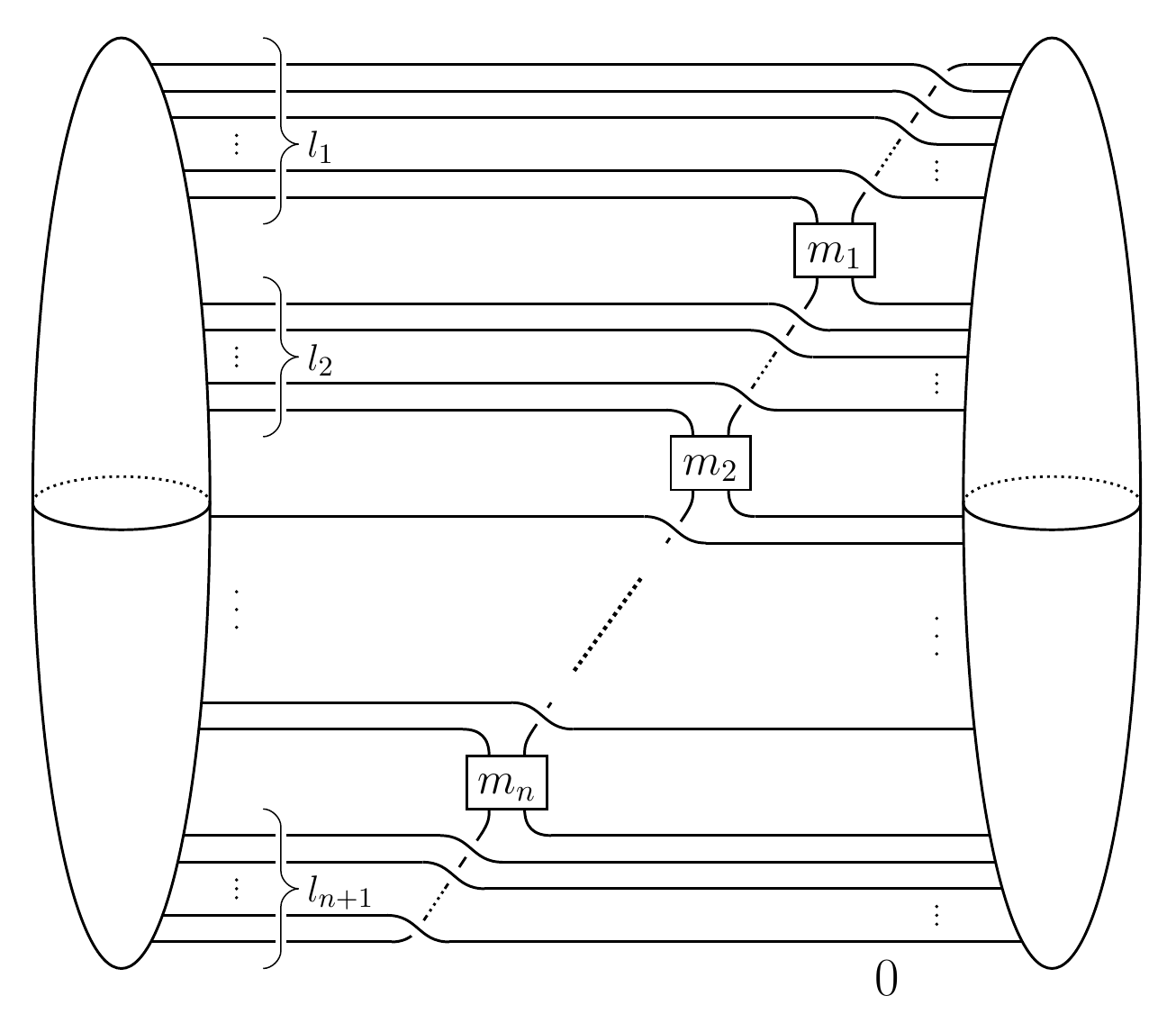}
\caption{The Kirby diagram of $C_{n,k}$ (ball-notation).}
\label{Stein position}
\end{figure}

\begin{lemma}
\label{lem: cork}
The manifold $C_{n,k}$ is a cork of Mazur type. 
\end{lemma}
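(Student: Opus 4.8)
The plan is to verify the three conditions of Proposition~\ref{prop:cork of Mazur type} directly from the Kirby diagram of $C_{n,k}$, since that proposition gives a ready-made criterion and the manifold is precisely presented by a dotted unknot $K_1$ together with a $0$-framed unknot $K_2$. First I would confirm the combinatorial structure of the diagram in Figure~\ref{Kirby diag of Cn}: identify which component carries the dot (the $1$-handle) and which carries the $0$-framing, and check that each is individually unknotted. The twist boxes labelled by the $m_i$ and $l_j$ control how the two components are clasped together, so I would read off the linking pattern from the diagram.

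For condition (2), I would compute the linking number $\mathrm{lk}(K_1,K_2)$ by summing the signed crossings between the two components; the claim is that this equals $\pm 1$, which forces $C_{n,k}$ to be contractible (the $2$-handle geometrically cancels the $1$-handle at the homological level). This is a routine crossing count, but one must be careful that the contributions of the various twist regions cancel so as to leave exactly $\pm 1$. For condition (1), I would exhibit the ambient isotopy of $S^3$ that exchanges $K_1$ and $K_2$; the natural expectation is that the diagram has been drawn with a manifest symmetry (a rotation or reflection swapping the two strands), so the task is to point to that symmetry and check it genuinely interchanges the two components while preserving the link type. This symmetry is what supplies the involution $f$ on $\partial C_{n,k}$ realized by exchanging $\bullet$ and $0$.

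The main obstacle will be condition (3): after converting the $1$-handle to ball notation (Figure~\ref{Stein position}), I must place $K_2$ in Legendrian position with respect to the standard contact structure on $\partial(B^4\cup 1\text{-handle})\cong S^1\times S^2$ and show its Thurston--Bennequin number is at least $1$. Here I would draw the front projection of $K_2$ and compute $\mathrm{tb}=(\text{writhe})-(\text{number of right cusps})$, arranging the Legendrian realization so that the large positive twisting supplied by the $l_j$ boxes dominates the cusps and the negative $m_i$ twisting on the companion strand. The delicate point is that the explicit values of $m_i$ and $l_j$ were chosen to make $\spshco$ large, so I must check that this choice is still compatible with achieving $\mathrm{tb}\geq 1$ rather than fighting against it; essentially I need the framing induced by the Legendrian diagram to be at most the $0$-framing, with enough slack to push $\mathrm{tb}$ up to a positive value.

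Once all three conditions are verified, Proposition~\ref{prop:cork of Mazur type} applies verbatim and yields that $C_{n,k}$ is a cork of Mazur type, completing the proof. I would organize the writeup so that the symmetry and linking-number checks are dispatched quickly by reference to the figures, reserving the bulk of the argument for the explicit Legendrian front and its Thurston--Bennequin computation.
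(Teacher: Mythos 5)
Your skeleton matches the paper's proof --- both verify conditions (1)--(3) of Proposition~\ref{prop:cork of Mazur type} --- but two of your three verifications rest on the wrong mechanism. For (1), the paper does not locate a manifest rotation or reflection of Figure~\ref{Kirby diag of Cn}; it observes that $K_1\sqcup K_2$ is a $2$-bridge link, and the two components of a $2$-bridge link can always be exchanged by an isotopy of $S^3$. (The symmetry you hope for does exist --- it is the standard involution of a $2$-bridge link --- but recognizing the $2$-bridge structure is what makes this a one-line argument rather than a hunt through the figure.) For (2), the useful observation is that each $m_i$-box involves two strands of the \emph{same} component, so those boxes contribute nothing to the linking number: $\mathrm{lk}(K_1,K_2)=\sum_{j=1}^{n+1}(-1)^j l_j$, and the values of the $l_j$ were chosen precisely so that this alternating sum is $\pm1$.

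The genuine gap is in condition (3). You plan to get $\mathrm{tb}\geq 1$ by letting ``the large positive twisting supplied by the $l_j$ boxes'' dominate ``the negative $m_i$ twisting.'' This cannot succeed: $m_1=-\left\lceil \frac{1}{2}+\sqrt{4\pi^2n-1}\right\rceil -k$ where $k$ is an arbitrary nonnegative integer (this is exactly what makes the family $\{C_{n,k}\}_k$ infinite), while the $l_j$ are fixed once $n$ is fixed; if the $m_i$-twists made any net negative contribution to $\mathrm{tb}$, the inequality would fail for all large $k$ and the lemma would be false. What actually happens, and what a correct write-up must exhibit, is exact cancellation: in the Legendrian front of Figure~\ref{Stein position}, each $m_i$-box contributes $2m_i$ to the writhe and $2m_i-1$ to the subtracted cusp/crossing count, hence a net $+1$ independent of $m_i$, while each $l_j$-box contributes $l_j-1$. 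Thus
\[
\mathrm{tb}=\sum_{i=1}^{n}2m_i+\sum_{j=1}^{n+1}(l_j-1)-\sum_{i=1}^{n}(2m_i-1)=\sum_{j=1}^{n+1}l_j-1\geq1,
\]
with all $m_i$- (hence $k$-) dependence dropping out. A domination argument is structurally impossible here; the proof lives or dies on this cancellation, which reflects the fact that the negative twists sit on the two strands of $K_2$ running through the $1$-handle.
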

\begin{proof}
We only have to check the conditions (1)-(3) in Proposition \ref{prop:cork of Mazur type}. 
\begin{enumerate}
 \item 
The link pictured in Figure \ref{Kirby diag of Cn} is a $2$-bridge link.
Hence the components can exchange their positions by an isotopy of $S^3$. 
\item
The linking number of them is 
$\displaystyle \sum_{j=1}^{n+1}(-1)^jl_j=\pm1$ (for some choice of orientations). 
\item
Figure \ref{Stein position} shows a Kirby diagram of $C_{n,k}$ 
after changing the notation of the $1$-handle to the ball-notation. 
The Thurston-Bennequin number of the attaching circle of the $2$-handle is
\[
\sum_{i=1}^{n}2m_i + \sum_{j=1}^{n+1}(l_i-1) - \sum_{i=1}^{n}(2m_i-1) 
= \sum_{j=1}^{n+1}l_i-1\geq1. 
\]
\end{enumerate}
The proof is completed. 
\end{proof}

\begin{figure}
\includegraphics[width=79mm]{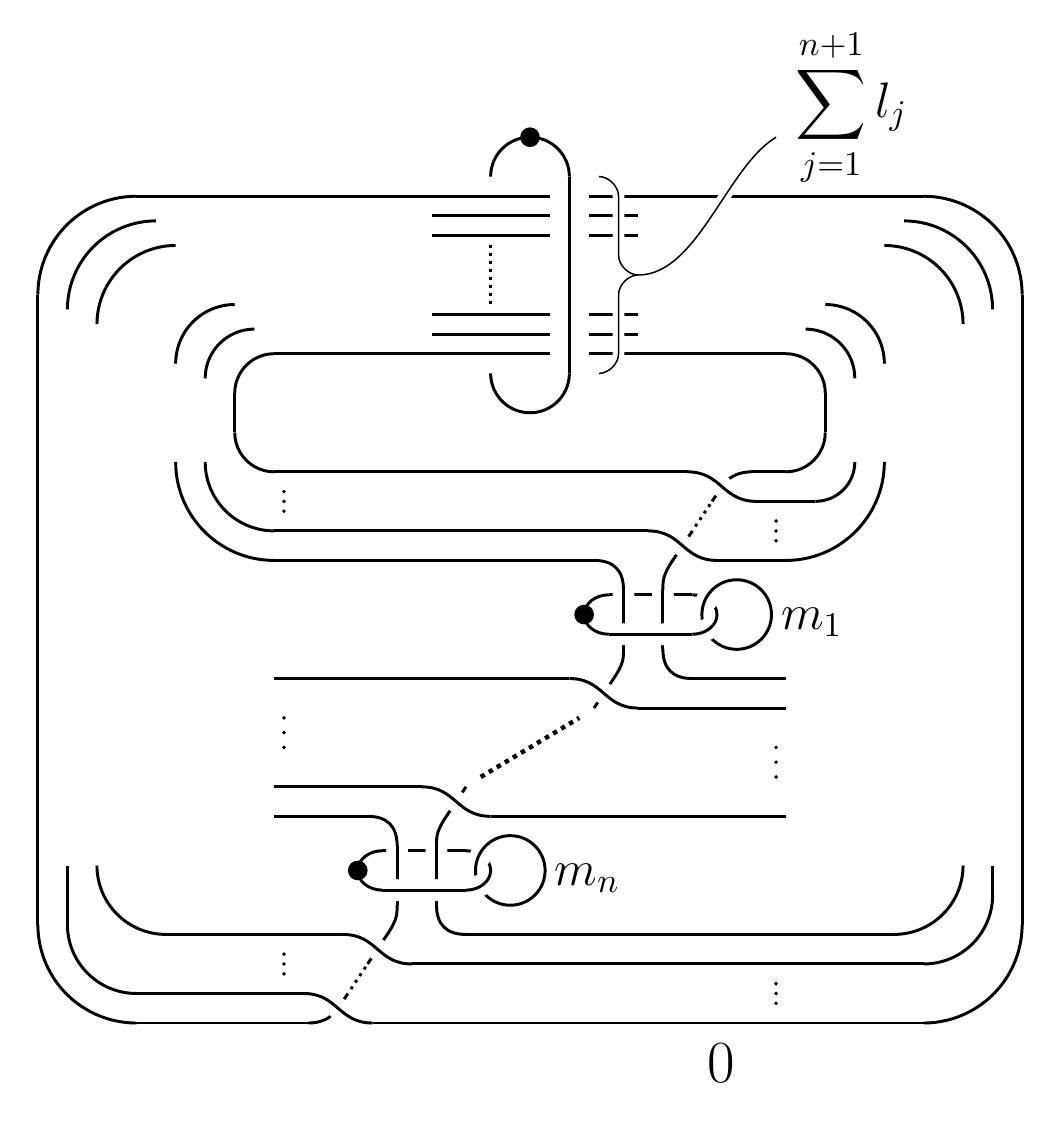}
\caption{The Kirby diagram of $C_{n,k}$.}
\label{to_shadow1}
\includegraphics[width=80mm]{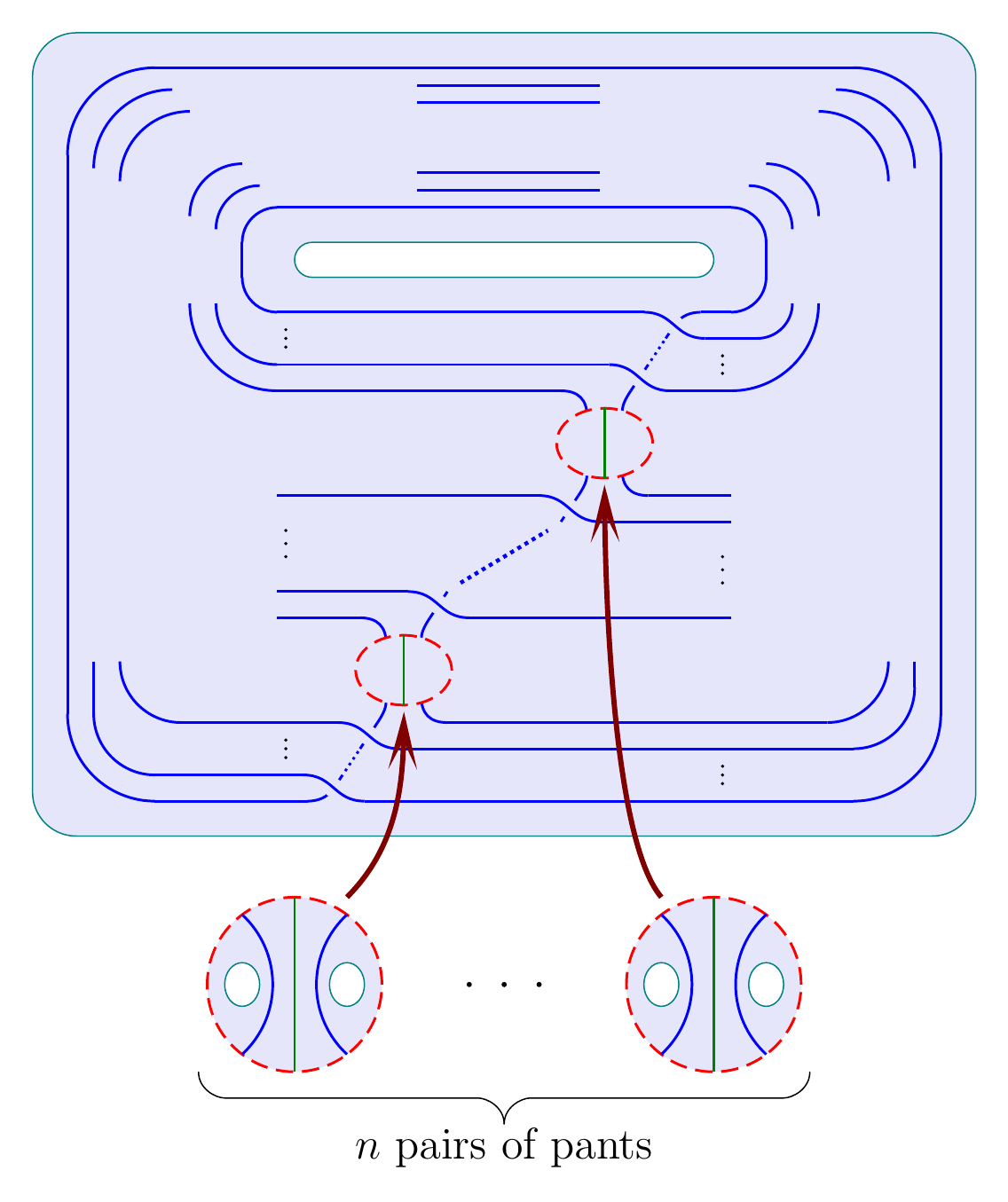}
\caption{The simple polyhedron $P$ 
and curves that are the images of the attaching circles of the $2$-handles.}
\label{to_shadow2}
\end{figure}
We then give an upper bound of $\spshco(C_{n,k})$ by constructing a shadow of $C_{n,k}$. 
\begin{lemma}
\label{lem: upper bound}
$\displaystyle \spshco(C_{n,k})\leq \sum_{j=1}^{n+1}l_j+n-3$.
\end{lemma}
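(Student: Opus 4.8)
The plan is to construct an explicit special shadow of $C_{n,k}$ and count its true vertices, showing this number equals $\sum_{j=1}^{n+1}l_j + n - 3$. Since $\spshco$ is defined as the minimum over all special shadows, exhibiting a single special shadow with this many true vertices immediately yields the upper bound. The starting point is the recipe described in the Gleams subsection: I would begin from a shadow $Y$ of the $0$- and $1$-handle part $H$ of $C_{n,k}$ whose regions all have gleam $0$, project the attaching circles of the $2$-handles onto $Y$ in regular position, and cap off each projected circle with a $2$-disk to obtain a shadow $X$ of $C_{n,k}$.

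First I would identify a convenient simple polyhedron $P$ that serves as the underlying shadow of $H$; Figures \ref{to_shadow1} and \ref{to_shadow2} indicate that $C_{n,k}$ has a handle decomposition with one $0$-handle, one $1$-handle, and one $2$-handle, so $H = B^4 \cup (1\text{-handle}) \cong S^1 \times B^3$. A natural shadow of $S^1 \times B^3$ is a disk (or an annulus/Möbius-type spine), and the true vertices of the final shadow $X$ will arise entirely from the crossings and self-intersections created when the attaching circle of the $2$-handle is pushed onto $P$. The core of the argument is therefore a careful bookkeeping of these true vertices: each crossing point of the projected link with the singular set, and each point forced by the twist boxes labeled $m_i$ and $l_j$, contributes to $\Sing(X)$, and I would organize the count box-by-box according to the definitions of $m_i$ and $l_j$ given at the start of the section.

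The main obstacle, and the step requiring the most care, is ensuring that the constructed shadow is genuinely \emph{special}, i.e.\ that every region is an open disk, while simultaneously keeping the true-vertex count as low as the claimed $\sum_{j=1}^{n+1} l_j + n - 3$. These two demands pull against each other: a naive projection that minimizes crossings typically leaves non-disk regions (for instance annular regions bounded by parallel strands of the twist regions), whereas subdividing to make all regions disks can introduce extra true vertices. I would resolve this by choosing the projection of the attaching circle so that the $l_j$ full twists are realized economically and so that the complementary regions in $P \setminus (\text{projected link})$ are automatically disks, then verify the disk condition region-by-region. The subtraction of $3$ in the final count is exactly the kind of global correction one expects from eliminating redundant vertices where the twist regions meet, or from the choice of spine for $H$; pinning down where these three vertices are saved is the delicate part of the verification.

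Finally, I would confirm via Turaev's reconstruction that the shadowed polyhedron $X$, equipped with the gleams computed from the local contributions in Figure \ref{local_gleam}, indeed reconstructs $C_{n,k}$ rather than some other $4$-manifold; this is where the gleam-from-link-projection rules recorded earlier are applied, checking that the gleam of the capping disk matches the framing of the $2$-handle and that all ambient regions receive gleam $0$. Granting the diffeomorphism $M_X \cong C_{n,k}$ and the verified true-vertex count, the inequality $\spshco(C_{n,k}) \leq \sum_{j=1}^{n+1} l_j + n - 3$ follows at once from the definition of special shadow-complexity.
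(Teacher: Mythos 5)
There is a genuine gap, and it is fatal to the stated bound. You build your shadow by projecting the attaching circle of the single $2$-handle onto a spine of $H=B^4\cup(1\text{-handle})\cong S^1\times B^3$ and then count true vertices ``box-by-box according to the definitions of $m_i$ and $l_j$.'' But every full twist in a twist box becomes crossings of the projected attaching circle, and each such crossing is a true vertex of the resulting shadow. The boxes $m_i$ contain roughly $\lceil \frac{1}{2}+\sqrt{4\pi^2n-1}\rceil$ full twists each, and the box $m_1$ contains $\lceil \frac{1}{2}+\sqrt{4\pi^2n-1}\rceil+k$ of them, so any count that includes the $m_i$-boxes as crossings produces a number of true vertices that grows linearly in $k$. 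The claimed bound $\sum_{j=1}^{n+1}l_j+n-3$ is independent of $k$, so your construction cannot establish it, no matter how economically you arrange the projection; this is not a bookkeeping subtlety but a structural obstruction. (Your premise is also factually off: Figure \ref{to_shadow1} does not depict a decomposition with one $1$-handle and one $2$-handle; it is obtained from Figure \ref{Stein position} precisely by adding $n$ cancelling pairs, yielding one $0$-handle, $n+1$ $1$-handles and $n+1$ $2$-handles.)

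The missing idea is exactly this modification of the handle decomposition. The paper adds $n$ cancelling $1$-/$2$-handle pairs so that each $m_i$-twist region is traversed by a $2$-handle running over a new $1$-handle. The shadow of the $0$- and $1$-handle part is then not a disk but the polyhedron $P$ of Figure \ref{to_shadow2}: an annulus with $n$ pairs of pants attached along disjoint simple closed curves. On this $P$ the $m_i$ full twists are absorbed into the framings (gleams) of the capping disks rather than appearing as crossings, so they contribute no true vertices at all; the true vertices come only from the $l_j$-clasps and the pants junctions, giving $\sum_{j=1}^{n+1}(l_j-1)+6n$ in the simple shadow $P'$. Collapsing along the boundary regions then kills the $4n+2$ true vertices adjacent to them (two on the annulus, four on each pair of pants), producing a special shadow $P''$ with exactly $\sum_{j=1}^{n+1}l_j+n-3$ true vertices. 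Your instinct that the ``$-3$'' arises from a collapsing-type saving is right in spirit, but without first restructuring the handle decomposition to neutralize the $m_i$-boxes, no amount of collapsing can remove the $k$-dependent vertices your projection creates. (Your final step of re-verifying the reconstruction via gleams is unnecessary: the polyhedron obtained by projecting attaching circles onto a shadow of the sub-handlebody is a shadow of $C_{n,k}$ by construction.)
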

\begin{proof}
Figure \ref{to_shadow1} shows a Kirby diagram of $C_{n,k}$ 
obtained from one shown in Figure \ref{Stein position} by 
changing the notation of the $1$-handle and adding $n$ cancelling pairs. 
Then we see that $C_{n,k}$ admits a handle decomposition with 
one $0$-handle, $n+1$ $1$-handles and $n+1$ $2$-handles. 

We now construct a shadow of $C_{n,k}$. 
Let $P$ be a simple polyhedron obtained from an annulus by 
attaching $n$ pairs of pants to disjoint $n$ simple closed curves 
as shown in Figure \ref{to_shadow2}. 
Then $P$ is a shadow of 
the union of a $0$-handle and $1$-handles 
with respect to the above handle decomposition of $C_{n,k}$. 
The attaching circles of the $2$-handles are projected onto $P$ 
as shown in Figure \ref{to_shadow2}. 
We attach $n+1$ $2$-disks along these curves.  
The obtained polyhedron is simple. We denote this polyhedron by $P'$. 
This polyhedron $P'$ is a shadow of $C_{n,k}$. 
By collapsing along each boundary region of $P'$, 
we obtain a special polyhedron $P''$ which is also a shadow of $C_{n,k}$. 

There are $\displaystyle \sum_{j=1}^{n+1}(l_j-1)+6n$ true vertices in $P'$. 
Two true vertices are adjacent to the boundary regions on the annulus, 
and $4$ true vertices are adjacent to boundary regions on each pair of pants. 
Hence $4n+2$ true vertices in total vanishes by the collapsing. 
Therefore $P''$ has $\displaystyle \sum_{j=1}^{n+1}l_j+n-3$ true vertices. 
The shadow-complexity of $C_{n,k}$ is bounded above by this number. 
\end{proof}
The next lemma gives a lower estimate of $\spshco(C_{n,k})$. 

\begin{lemma}
\label{lem: lower bound}
$\spshco(\partial C_{n,k})=2n$.
\end{lemma}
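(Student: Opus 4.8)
The plan is to prove the equality $\spshco(\partial C_{n,k}) = 2n$ by establishing the two inequalities separately, with the main work concentrated in the lower bound. For the upper bound $\spshco(\partial C_{n,k}) \leq 2n$, I would exploit the fact that the special shadow-complexity of a boundary is controlled by that of the $4$-manifold: by definition $\spshco(\partial M) \leq \spshco(M)$, but the bound from Lemma \ref{lem: upper bound} is far larger than $2n$, so this crude inequality is useless here. Instead I would directly construct an economical special shadow of the $3$-manifold $\partial C_{n,k}$ with exactly $2n$ true vertices. The natural source is the shadow $P''$ of $C_{n,k}$ built in Lemma \ref{lem: upper bound}: I would look for a $4$-manifold bounded by $\partial C_{n,k}$ that admits a much simpler special shadow than $C_{n,k}$ itself, most plausibly by capping off or simplifying the link diagram so that the $n$ pairs of pants each contribute only a bounded number of true vertices independent of the large coefficients $l_j$.

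The heart of the lemma, and the step I expect to be the main obstacle, is the lower bound $\spshco(\partial C_{n,k}) \geq 2n$. Here I would invoke Theorem \ref{thm:Ishikawa-Koda} (Ishikawa--Koda), which gives a \emph{lower} bound on $\spshco(N)$ via hyperbolic Dehn filling: if $X$ is a special shadow with $V$ true vertices and each region $R$ satisfies the slope-length condition $\sqrt{4\gl(R)^2 + v(R)^2} > 2\pi\sqrt{2V}$, then in fact $\spshco(N) = V$. The strategy is therefore to produce a candidate special shadow $X$ of $\partial C_{n,k}$ with $V = 2n$ true vertices whose gleams $\gl(R)$ and vertex-incidences $v(R)$ are \emph{large enough} that the slope-length inequality holds for every region. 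This is precisely why the coefficients $m_i$ and $l_j$ in the definition of $C_{n,k}$ were chosen using the ceiling expressions $\lceil\sqrt{4\pi^2 n - 1}\rceil$ and $\lceil\sqrt{4\pi^2 n - 1/4}\rceil$: these are engineered so that the relevant gleams, which are roughly $\pm l_j/2$ or $\pm(\tfrac12 + l_j)$, satisfy $\sqrt{4\gl(R)^2 + v(R)^2} > 2\pi\sqrt{2\cdot(2n)} = 4\pi\sqrt{n}$.

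Concretely, I would first identify the special shadow $X$ of $\partial C_{n,k}$ that realizes $V = 2n$, compute its $\Z_2$-gleams and the induced half-integer gleams on each region using the crossing-counting and singular-set contribution rules recalled in the Gleams subsection (Figure \ref{local_gleam}), and tabulate $v(R)$ for each region. Then for each region I would verify the inequality $4\gl(R)^2 + v(R)^2 > 8\pi^2 n$; since $v(R) \geq 0$, it suffices in most cases to check $4\gl(R)^2 > 8\pi^2 n$, i.e. $|\gl(R)| > \pi\sqrt{2n}$, and the ceiling choices are calibrated so that $|\gl(R)|$ is at least $\lceil\sqrt{4\pi^2 n - \epsilon}\rceil \geq \sqrt{4\pi^2 n - \epsilon}$ for suitable small $\epsilon$, which exceeds $\pi\sqrt{2n}$ with room to spare. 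Once every region passes the test, Theorem \ref{thm:Ishikawa-Koda} yields $\spshco(\partial C_{n,k}) = V = 2n$ outright, delivering both the lower bound and the exact value in one stroke.

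The delicate points are bookkeeping rather than conceptual: correctly counting $v(R)$ with multiplicity (a true vertex may be incident to the same region more than once), pinning down the precise gleam on each region from the twist-box coefficients and the linking data, and confirming that the region carrying the smallest gleam still clears the threshold $2\pi\sqrt{2V}$ for $V=2n$. I would handle the special-shadow (disk-region) requirement by collapsing boundary regions as in Lemma \ref{lem: upper bound}, checking that this collapse does not alter the vertex count below $2n$ and leaves all regions as open disks. If any single region fails the inequality, the argument collapses, so the careful matching of the ceiling functions to the bound $2\pi\sqrt{2\cdot 2n}$ is exactly the place where the construction must be verified most scrupulously.
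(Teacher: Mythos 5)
Your overall strategy coincides with the paper's: the paper takes an auxiliary $4$-manifold $M_{n,k}$ with $\partial M_{n,k}\cong\partial C_{n,k}$, adds $2n+1$ cancelling $1$-/$2$-handle pairs so that the large twist boxes turn into framing coefficients (hence gleams of disk regions) instead of crossings, projects the resulting link onto an $(n+1)$-holed disk with $n$ pairs of pants attached, collapses boundary regions to get a special shadow $Q''$ of the boundary with exactly $2n$ true vertices, and then applies Theorem \ref{thm:Ishikawa-Koda} to get the equality $\spshco(\partial C_{n,k})=2n$ in one stroke, exactly as you propose. The step you leave as a black box --- arranging that each pair of pants contributes a bounded number of vertices independent of the coefficients --- is precisely this cancelling-pair trick, and the gleam bookkeeping for $Q''$ is where most of the paper's work lies.

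There is, however, a genuine numerical error in your verification step, at exactly the spot you flag as most delicate. With $V=2n$ the Ishikawa--Koda threshold is $2\pi\sqrt{2V}=4\pi\sqrt{n}$, whose square is $16\pi^2 n$, not $8\pi^2 n$; your reduction ``it suffices to check $4\gl(R)^2>8\pi^2 n$, i.e.\ $|\gl(R)|>\pi\sqrt{2n}$'' tests against a bound that is only a factor $1/\sqrt{2}$ of the true one, and your claim that gleams of size about $\sqrt{4\pi^2 n-\epsilon}$ clear it ``with room to spare'' is false at the correct threshold, since $\sqrt{4\pi^2 n-\epsilon}<2\pi\sqrt{n}$. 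In the paper's shadow the critical regions are $R_{1,i}$ with gleam $m_i+\frac{1}{2}$, where $|m_i+\frac{1}{2}|>\sqrt{4\pi^2 n-1}$ and $v(R_{1,i})=2$, and $R_{2,1},R_{2,n+1}$ with gleam $\left\lceil\sqrt{4\pi^2 n-1/4}\right\rceil$ and $v=1$; in both cases the gleam term alone need not exceed $2\pi\sqrt{n}$, and it is exactly the summand $v(R)^2$ ($=4$, resp.\ $1$) that pushes $4\gl(R)^2+v(R)^2$ strictly above $16\pi^2 n$ (strictness coming from the ceiling of an irrational number). The coefficients $m_i,l_j$ are calibrated with essentially zero slack against the correct threshold, so a verification run against $8\pi^2 n$ would not justify invoking Theorem \ref{thm:Ishikawa-Koda}, while a correct verification must use the vertex counts $v(R)$ and not the gleams alone.
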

\begin{proof}
\begin{figure}
\includegraphics[width=120mm]{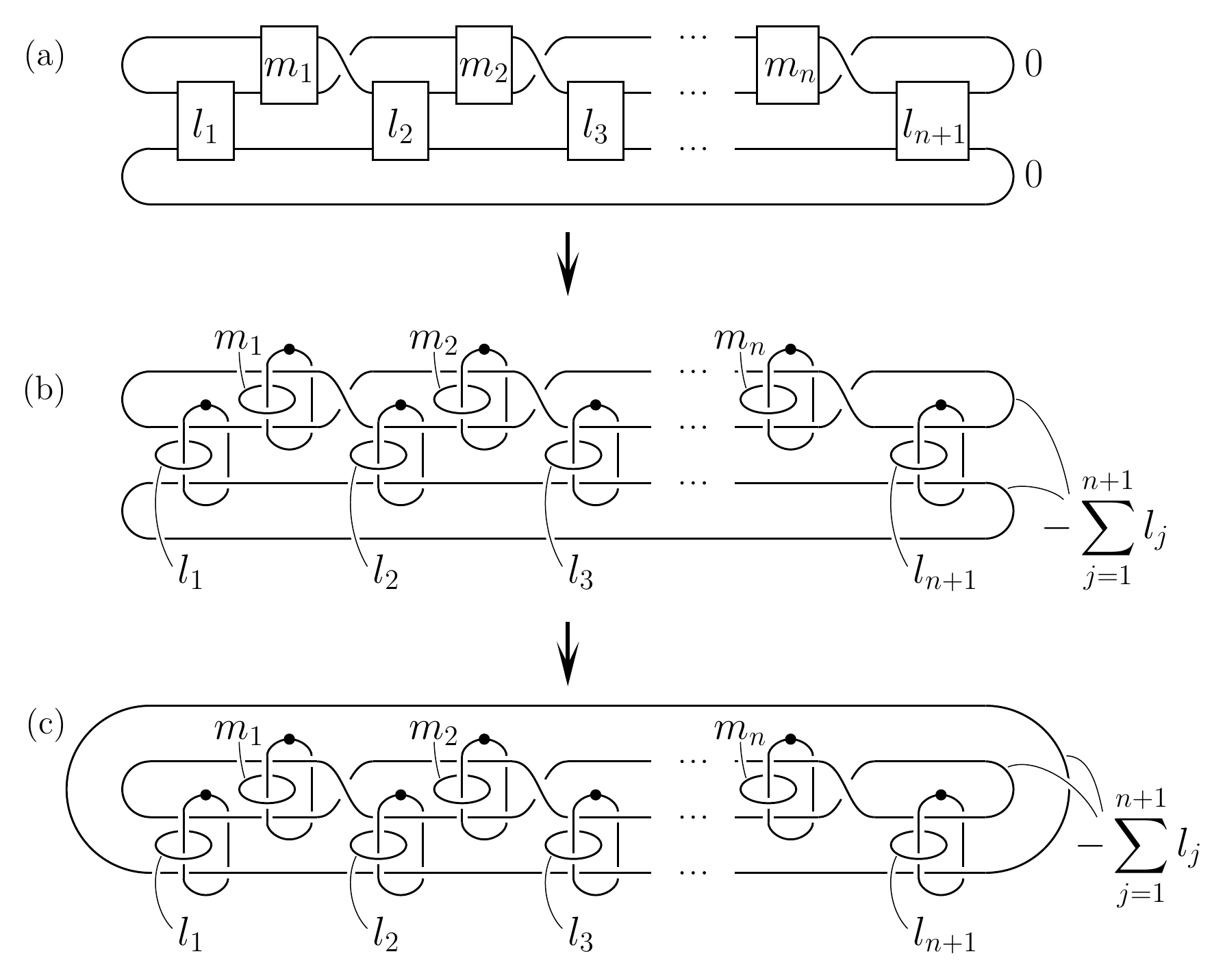}
\caption{The Kirby diagram of $M_{n,k}$.}
\label{2-bridge2}
\end{figure}
Let us consider a new $4$-manifold $M_{n,k}$ 
represented by the diagram shown in Figure \ref{2-bridge2}-(a). 
Note that the boundaries of $M_{n,k}$ and $C_{n,k}$ are diffeomorphic, 
so we compute $\spshco(\partial C_{n,k})$ by considering a shadow of $M_{n,k}$. 
By adding $2n+1$ cancelling pairs of $1$- and $2$-handles, 
$M_{n,k}$ admits the diagram shown in Figure \ref{2-bridge2}-(b). 
We get the diagram shown in Figure \ref{2-bridge2}-(c) by an isotopy. 

We then construct a shadow of $M_{n,k}$ from \ref{2-bridge2}-(c). 
Let $Q$ be a simple polyhedron obtained from an $(n+1)$-holed disk by 
attaching $n$ pairs of pants to disjoint $n$ simple closed curves 
as shown in Figure \ref{shadow of Mnk}. 
\begin{figure}
\includegraphics[width=120mm]{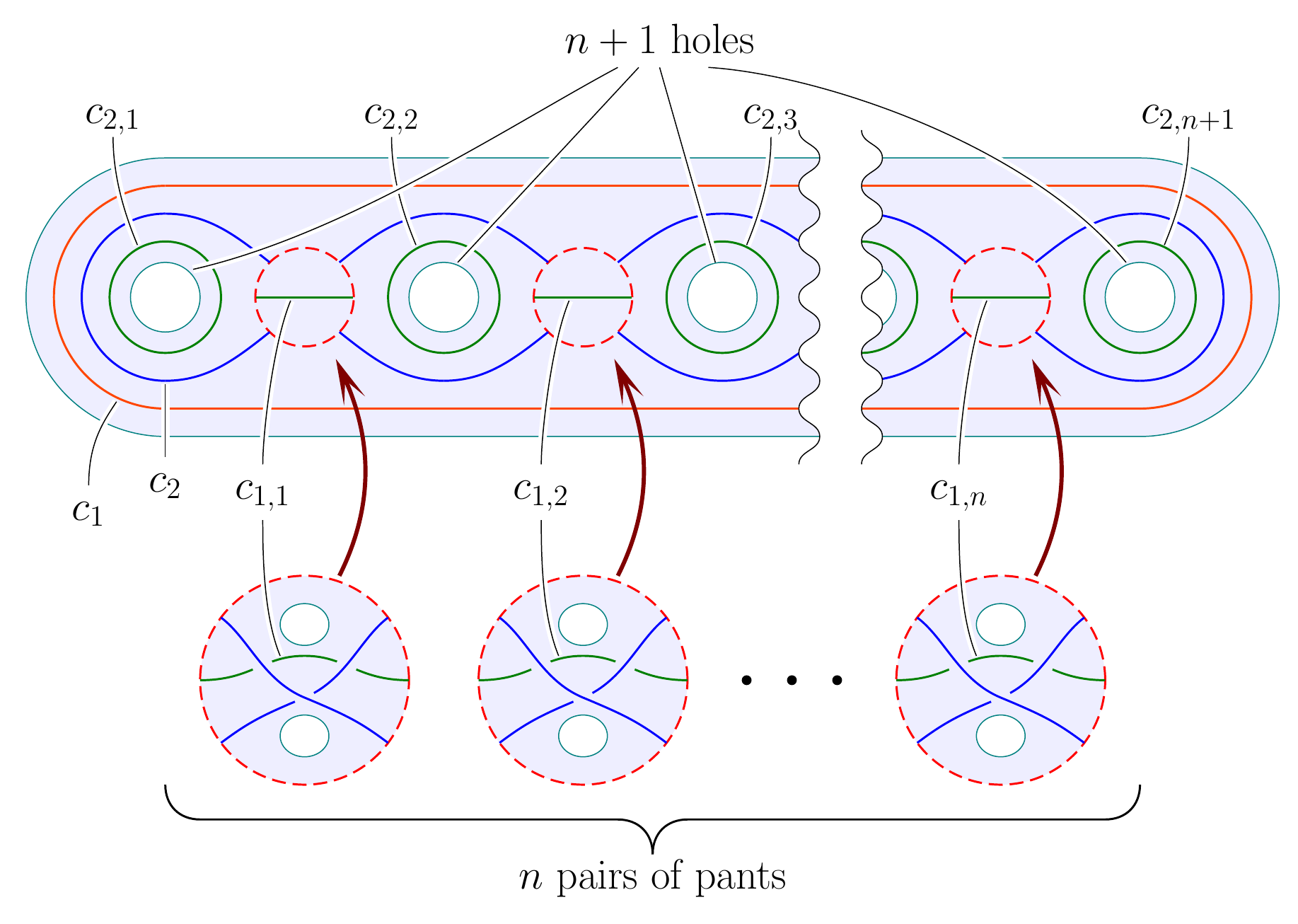}
\caption{The shadow $Q'$ of $M_{n,k}$ and curves on $Q'$ 
corresponding to the $2$-handles of $M_{n,k}$.}
\label{shadow of Mnk}
\includegraphics[width=120mm]{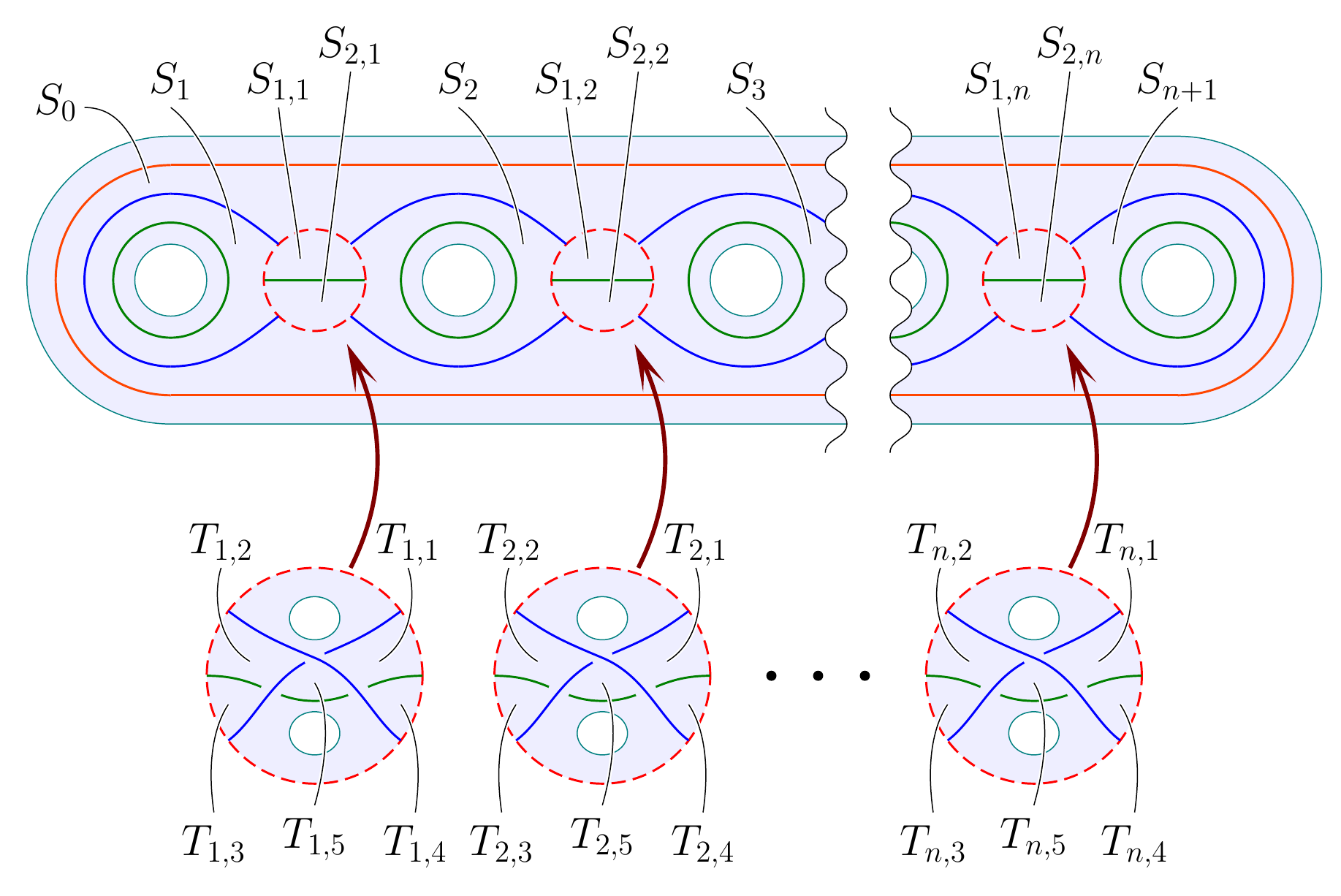}
\caption{The internal regions on $Q'$.}
\label{regions of Q'}
\end{figure}
The union of a $0$-handle and $1$-handles 
retracts onto $Q$. 
Then the attaching circles of the $2$-handles are projected onto $Q$ 
as shown in Figure \ref{shadow of Mnk}. 
We denote them as follows: 
\begin{itemize}
 \item 
let $c_1$ and $c_2$ be the images of the two attaching circles 
whose framing coefficients are $\displaystyle -\sum_{j=1}^{n+1}l_j$; 
 \item 
let $c_{1,i}$ be the image of the attaching circle whose 
framing coefficient is $m_i$ for $i\in\{1,\ldots,n\}$;
 \item 
let $c_{2,j}$ be the image of the attaching circle whose framing coefficient is $l_j$ 
for $j\in\{1,\ldots,n+1\}$. 
\end{itemize} 
The number of the curves above is $2n+3$ in total. 
Then we attach $2n+3$ $2$-disks to these curves. 
Let $D_1$, $D_2$, $D_{1,i}$ and $D_{2,j}$ be the disks attached to 
the curves $c_1$, $c_2$, $c_{1,i}$ and $c_{2,j}$, respectively. 
Let $Q'$ denote the obtained polyhedron. 
It has $9n$ true vertices. 

The polyhedron $Q'$ has $10n+5$ internal regions and $3n+2$ boundary regions. 
As shown in Figure \ref{regions of Q'}, 
\begin{itemize}
\item 
the $(n+1)$-holed disk is divided into $3n+2$ internal regions 
$S_0$, $S_1, \ldots, S_{n+1}$, $S_{1,1},\ldots,S_{1,n},\ S_{2,1},\ldots,S_{2,n}$ 
and $n+2$ boundary regions, and 
\item 
the pair of pants which $c_{1,i}$ passes through is divided into $5$ internal regions 
$T_{i,1},\ldots,T_{i,5}$ and $2$ boundary regions. 
\end{itemize}
We then compute their gleams. 
The gleams on $D_1$, $D_{1,i}$ and $D_{2,j}$ coincide with the framing coefficients 
corresponding to the $2$-handles, but the gleam on $D_2$ does not 
since $c_2$ has a self-crossing point on each pair of pants. 
Thus we get Table \ref{table:gl of D}. 
\begin{table}[h]
\centering
\begin{tabular}[c]{|c||c|c|c|c|}
\hline
${}$
& $D_1$
& $D_2$
& $D_{1,i}$
& $D_{2,j}$\\
\hline \hline
$\gl$
& $\displaystyle -\sum_{j=1}^{n+1}l_j$ 
& $\displaystyle -\sum_{j=1}^{n+1}l_j+n$ 
& $m_i$ 
& $l_j$\\
\hline
\end{tabular}
\caption{The gleams of $D_{\ast}$.}
\label{table:gl of D}
\end{table}
\newline 
According to the local contribution to gleams shown in Figure \ref{local_gleam}, 
we obtain the gleams of the other internal regions as shown in Table \ref{table:gl of S,T}. 
\begin{table}[h]
\centering
\begin{tabular}[c]{|c||c|c|c|c|c|c|c|c|c|}
\hline
${}$
& $S_1$
& $S_j$
& $S_{1,i}$
& $S_{2,i}$
& $T_{i,1}$
& $T_{i,2}$
& $T_{i,3}$
& $T_{i,4}$
& $T_{i,5}$\\
\hline \hline
\parbox[c][\myheighta][c]{0cm}{}\vbox{\hbox{$\gl$}} 
& $0$ 
& $0$ 
& $0$ 
& $0$ 
& $-\frac{1}{2}$ 
& $-\frac{1}{2}$ 
& $0$ 
& $0$ 
& $\frac{1}{2}$\\ 
\hline
\end{tabular}
\caption{The gleams of $S_{\ast}$ and $T_{\ast}$.}
\label{table:gl of S,T}
\end{table}

By collapsing along boundary regions of $Q'$, it becomes a special polyhedron, 
which we denote by $Q''$. 
This polyhedron is a special shadow of $M_{n,k}$, 
which has $2n+3$ internal regions, no boundary regions and $2n$ true vertices. 
Let $R_1$, $R_2$, $R_{1,i}$ and $R_{2,j}$ be the regions of $Q''$ 
containing $D_1$, $D_{1,i}$ and $D_{2,j}$, respectively. 
More precisely define 
\begin{align*}
R_1\ 
&= D_1\cup S_0\cup \left( \bigcup_{i=1}^{n} S_{1,i}\cup S_{2,i} \right),\\
R_2\ 
&= D_2\cup \left( \bigcup_{i=1}^{n} T_{i,1}\cup T_{i,2}\cup T_{i,3}\cup T_{i,4} \right),\\
R_{1,i} 
&= D_{1,i}\cup T_{i,5}\text{, and}\\
R_{2,j} 
&= D_{2,j}\cup S_{j}.
\end{align*}
Taking the sums of the gleams of the subregions, 
we obtain Table \ref{table:gl of R}. 
\begin{table}[h]
\begin{center}
\begin{tabular}[c]{|c||c|c|c|c|}
\hline
${}$
& $R_1$
& $R_2$
& $R_{1,i}$
& $R_{2,j}$\\
\hline \hline
$\gl$
& $\displaystyle -\sum_{j=1}^{n+1}l_j$ 
& $\displaystyle -\sum_{j=1}^{n+1}l_j$ 
& $\displaystyle m_i+\frac{1}{2}$ 
& $l_j$\\
\hline
\end{tabular}
\caption{The gleams of $R_{\ast}$.}
\label{table:gl of R}
\end{center}
\end{table}

By using Theorem \ref{thm:Ishikawa-Koda}, 
we can determine the special shadow-complexity of $\partial M_{n,k}$. 
Recall that $v(R)$ is the number of true vertices adjacent to a region $R$ with multiplicity. 
It is easily seen that 
\begin{align*}
v(R_1)\ &= 4n,\\
v(R_2)\ &= 4n,\\
v(R_{1,i}) &= 2,\\
v(R_{2,j}) &= \begin{cases}
1& (j=1,n+1) \\[6pt]
2 & (\mathrm{otherwise}).
\end{cases}
\end{align*}
An easy computation shows that 
$\sqrt{4\gl(R_{\ast})^2+v(R_{\ast})^2}>2\pi\sqrt{2\cdot2n}$ for every region $R_{\ast}$ of $Q''$. 
Hence 
$\spshco(\partial M_{n,k})=\spshco(\partial C_{n,k})=2n$ by Theorem \ref{thm:Ishikawa-Koda}. 
\end{proof}
\begin{remark}
Our special polyhedron $Q''$ does not admit a branching. 
\end{remark}
We finally show that there are infinite $C_{n,k}$'s in the following. 
\begin{lemma}
\label{lem: Casson}
The two manifolds $\partial C_{n,k}$ and $\partial C_{n,k'}$ are not homeomorphic 
unless $k=k'$. 
\end{lemma}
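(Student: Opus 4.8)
The plan is to invoke the Casson invariant $\lambda$, which is defined for oriented integral homology $3$-spheres and is an invariant of the oriented homeomorphism type. First I would observe that, since Lemma \ref{lem: cork} exhibits $C_{n,k}$ as a cork, it is contractible; hence by Poincar\'e--Lefschetz duality and the long exact sequence of the pair $(C_{n,k},\partial C_{n,k})$, the boundary $\partial C_{n,k}$ is an integral homology $3$-sphere. Thus $\lambda(\partial C_{n,k})$ is defined, and any homeomorphism between two of these boundaries forces their Casson invariants to agree up to sign.

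Next I would isolate the role of $k$. In the diagram of Figure \ref{Kirby diag of Cn} the parameter $k$ enters only through the twist box $m_1=-\lceil \tfrac12+\sqrt{4\pi^2 n-1}\rceil-k$ on the first group of strands, all other framings being independent of $k$. Passing from $C_{n,0}$ to $C_{n,k}$ therefore amounts to inserting $k$ extra full twists into that twist region, and the standard ``twisting equals surgery on an encircling unknot'' principle realizes this as $\pm 1/k$-surgery on an unknot $U$ that encircles those strands. Transporting $U$ into the homology sphere $\partial C_{n,0}$ as a knot $K$, this yields
\[
\partial C_{n,k}=\bigl(\partial C_{n,0}\bigr)_{\pm 1/k}(K).
\]

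I would then apply the Casson surgery formula of Akbulut--McCarthy: for a knot $K$ in a homology sphere $\Sigma$ one has $\lambda(\Sigma_{1/(m+1)}(K))-\lambda(\Sigma_{1/m}(K))=\tfrac12\,\Delta_K''(1)$, where $\Delta_K$ is the symmetrized Alexander polynomial and the right-hand side is independent of $m$. Iterating gives
\[
\lambda(\partial C_{n,k})=\lambda(\partial C_{n,0})\pm \tfrac{k}{2}\,\Delta_K''(1),
\]
an affine function of $k$. Provided $\Delta_K''(1)\neq 0$, this map $k\mapsto\lambda(\partial C_{n,k})$ is injective, so no orientation-preserving homeomorphism can identify two distinct members. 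To rule out orientation-reversing homeomorphisms as well, I would use $\lambda(-\Sigma)=-\lambda(\Sigma)$ together with a determination of the base value $\lambda(\partial C_{n,0})$, checking that the values $\lambda(\partial C_{n,k})$ for $k\geq 1$ all lie on one side of $0$; then $\lambda(\partial C_{n,k})=-\lambda(\partial C_{n,k'})$ is impossible, and $k=k'$ is forced in every case.

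The main obstacle is the nonvanishing $\Delta_K''(1)\neq 0$, which requires identifying $K$ (the core of the twisting disk, viewed inside $\partial C_{n,0}$) and computing its Alexander polynomial from a presentation of $\pi_1$ read off the diagram; the controlled linking of $K$ with the $2$-handle attaching circles, together with the $2$-bridge structure, should force $\Delta_K''(1)$ to be a nonzero integer. If that direct computation proves awkward, an equivalent route is to compute the integer $\lambda(\partial C_{n,k})$ outright as an explicit nonconstant function of $k$ from the surgery presentation and the Alexander polynomial of the underlying $2$-bridge link, and to read off both its injectivity and its sign from that formula, which establishes the lemma.
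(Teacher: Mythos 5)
Your strategy coincides with the paper's: realize the passage from $\partial C_{n,k}$ to $\partial C_{n,k'}$ as $\tfrac{1}{d}$-surgery ($d=k'-k$) on a knot $K$ encircling the strands running through the $m_1$-twist box (the paper does exactly this via a Rolfsen twist on the surgery diagram of Figure \ref{2-bridge2}-(a)), then apply the Casson surgery formula so that everything reduces to $\Delta_K''(1)\neq 0$. But that reduction is precisely where your proposal stops: you defer the nonvanishing to a hoped-for computation, asserting that the $2$-bridge structure ``should force'' $\Delta_K''(1)$ to be a nonzero integer. This is a genuine gap rather than a formality, because the conclusion is quantitatively sensitive to how $K$ links the diagram. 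The paper closes the gap by exhibiting an explicit once-punctured-torus Seifert surface $F$ for $K$ inside the homology sphere (Figure \ref{Casson2}), whose generating curves $\alpha,\beta$ give the Seifert matrix
\[
S=\begin{pmatrix} 0 & (-1)^{n+1}l_1 \\ (-1)^{n+1}l_1+1 & 0\end{pmatrix},
\]
hence $\tfrac12\Delta_K''(1)=-\bigl(l_1^2-(-1)^n l_1\bigr)$, and this is nonzero precisely because $l_1=\bigl\lceil \sqrt{4\pi^2 n-1/4}\,\bigr\rceil>1$. Note that the analogous quantity vanishes whenever the off-diagonal Seifert entries are $\{0,1\}$ or $\{-1,0\}$ (i.e.\ when the encircled strands are algebraically unlinked in the relevant sense), in which case the twist would not change $\lambda$ at all; so identifying $K$, finding its Seifert surface, and computing the matrix is the actual content of the lemma, not an afterthought. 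Your alternative suggestion of computing $\Delta_K$ ``from a presentation of $\pi_1$'' is also less direct than you make it sound, since $K$ lies in $\partial C_{n,0}$ rather than in $S^3$; the Seifert-surface route works because linking numbers are still well defined in an integral homology sphere.

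Two smaller remarks. Your attention to orientation-reversing homeomorphisms goes beyond the paper, whose proof only shows $\lambda(\partial C_{n,k'})-\lambda(\partial C_{n,k})\neq 0$ and hence, strictly speaking, only separates oriented homeomorphism types; this extra care is to your credit, but your plan for it requires the base value $\lambda(\partial C_{n,0})$, which you also leave uncomputed. In any case, since $\lambda(\partial C_{n,k})=\lambda(\partial C_{n,0})-k\bigl(l_1^2-(-1)^n l_1\bigr)$ is strictly monotone in $k$, at most two parameters could ever share an unoriented homeomorphism type, which already yields the infiniteness needed for Theorem \ref{thm: corks}. The skeleton of your argument is therefore correct and matches the paper; what is missing is the one computation on which the whole proof rests.
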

\begin{proof}
Assume that $k'-k=d>0$. 
Set $m_1=-\left\lceil \frac{1}{2}+\sqrt{4\pi^2n-1}\right\rceil -k$ and 
$m'_1=-\left\lceil \frac{1}{2}+\sqrt{4\pi^2n-1}\right\rceil -k'$.
\begin{figure}\
\includegraphics[width=100mm]{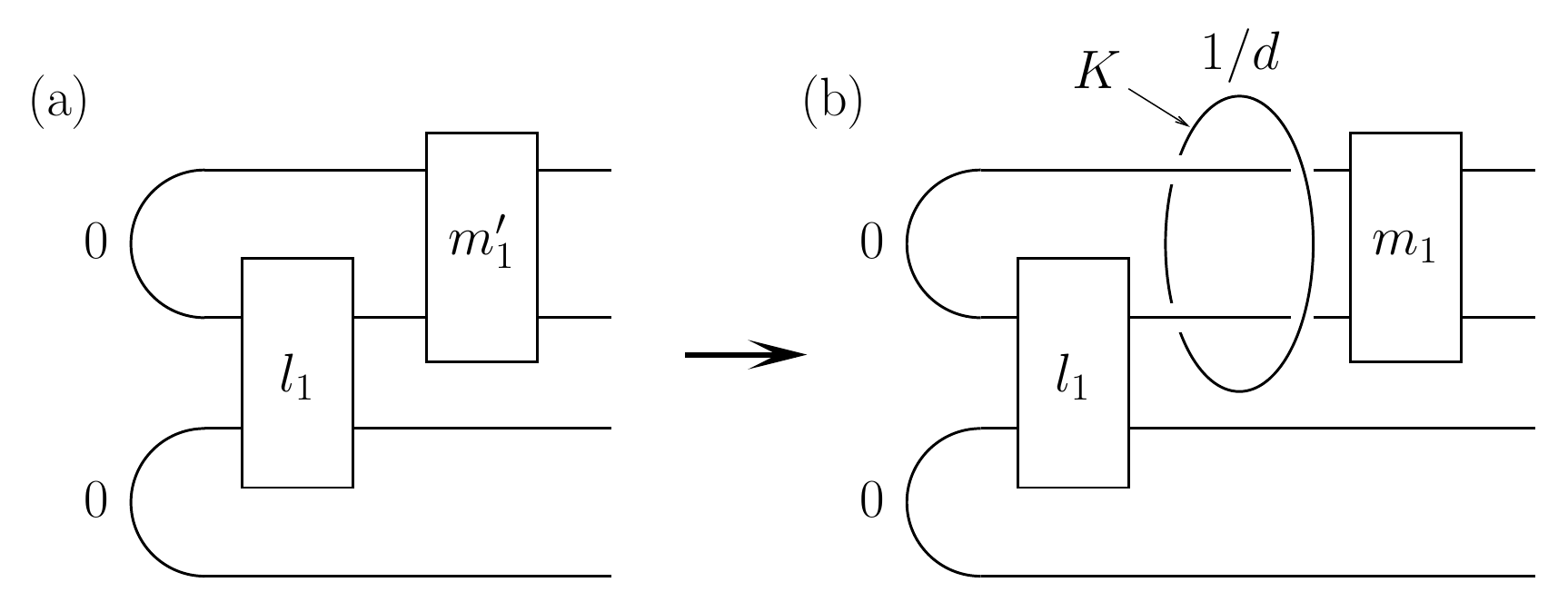}
\caption{A part of the surgery diagram of $\partial C_{n,k'}$. }
\label{Casson1}
\includegraphics[width=70mm]{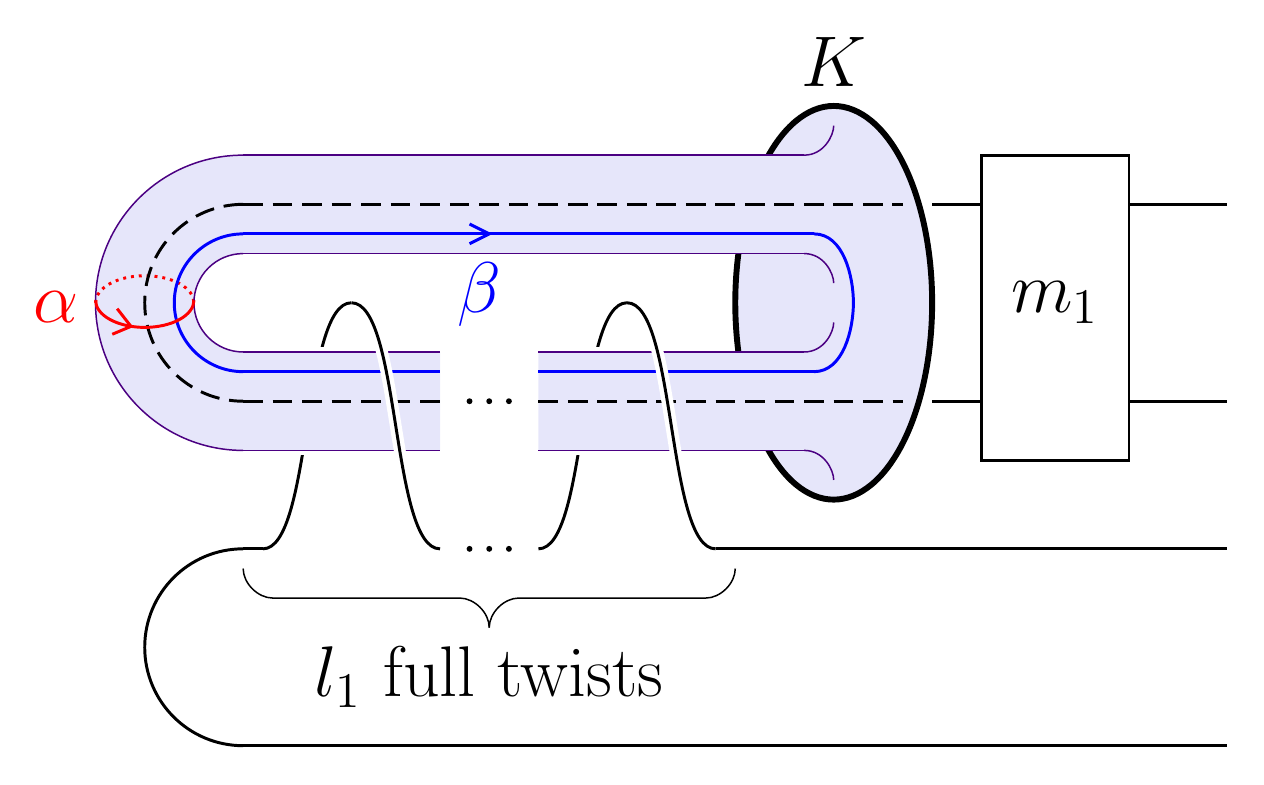}
\caption{A Seifert surface $F$ for $K$ in $\partial C_{n,k'}$.}
\label{Casson2}
\end{figure}
We regard Figure \ref{2-bridge2}-(a) as a surgery diagram of $\partial C_{n,k}$ 
and as one of $\partial C_{n,k'}$ by replacing $m_1$ with $m'_1$. 
Figure \ref{Casson1}-(a) shows a part of the diagram of $\partial C_{n,k'}$. 
A Rolfsen twist gives the diagram shown in \ref{Casson1}-(b). 
Let $K$ be the circle inserted by this change. 
The diagram in Figure \ref{Casson1}-(b) says that 
$\partial C_{n,k'}$ is obtained from $\partial C_{n,k}$ 
by $\frac{1}{d}$-surgery along $K$. 

We then compute the Alexander polynomial $\varDelta (t)$ of $K\subset \partial C_{n,k}$. 
Note that $\partial C_{n,k}$ is an integral homology sphere since $C_{n,k}$ is contractible. 
Let $F$ be a Seifert surface of $K$ shown in Figure \ref{Casson2}, 
which is homeomorphic to a once-punctured torus. 
The curves $\alpha$ and $\beta$ as shown in the figure generate $H_1(F;\Z)$, 
and the knot $K$ has the Seifert matrix 
\begin{align*}
S 
&= \left(
    \begin{array}{cc}
      lk(\alpha,\alpha^{+}) & lk(\alpha,\beta^{+}) \\
      lk(\beta,\alpha^{+}) & lk(\beta,\beta^{+}) 
    \end{array}
  \right)\\ 
&= \left(
    \begin{array}{cc}
      0 & (-1)^{n+1}\cdot l_1  \\
      (-1)^{n+1}\cdot l_1+1 & 0 
    \end{array}
  \right). 
\end{align*}
Thus we have 
\[
\varDelta (t)= \left(-l_1^2+(-1)^n\cdot l_1\right)t +2l_1^2-2(-1)^n\cdot l_1+1
+\left(-l_1^2+(-1)^n\cdot l_1\right)t^{-1}. 
\]
The surgery formula for Casson invariant gives 
\[
\lambda(\partial C_{n,k'})-\lambda(\partial C_{n,k}) 
= \frac{d}{2}\cdot \varDelta''(1)
= -d \left( l_1^2-(-1)^n\cdot l_1\right). 
\]
This value is not zero since $l_1>1$, which completes the proof. 
\end{proof}
\begin{remark}
Lemma \ref{lem: Casson} can also be shown directly by Thurston's hyperbolic Dehn filling 
for sufficiently large $k$ and $k'$. 
\end{remark}

Theorem \ref{thm: corks} follows from 
Lemmas \ref{lem: cork}, \ref{lem: upper bound}, \ref{lem: lower bound} and \ref{lem: Casson}. 
We close this section with the following remark. 
\begin{remark}
\label{rmk:primeness of C_{n,k}}
In a private discussion, 
Ruberman suggested that other examples of corks with large shadow-complexity 
can be found by performing the boundary connected-sum of corks 
whose boundary has non-zero Gromov norm. 
For instance, the corks introduced in \cite{Naoe:2015} 
whose shadow-complexity is $1$ has hyperbolic boundary. 
Let $C$ be one of them, and then the Gromov norm $\|\partial C\|$ is not zero. 
It is easily seen that the boundary connected-sum $\natural_nC$ of $n$ copies of $C$ is a cork, and 
$\displaystyle n\frac{v_{\text{tet}}}{2v_{\text{oct}}}\|\partial C\|\leq\shco(\natural_nC)\leq n$. 

There is a difference in ``primeness'' 
between the above example $\natural_nC$ and $C_{n,k}$ in Theorem \ref{thm: corks}. 
Especially, $\natural_nC$ is \textit{boundary-sum reducible}, 
and $C_{n,k}$ is \textit{boundary-sum irreducible}. 
Here an $n$-manifold $M$ with boundary is said to be \textit{boundary-sum irreducible} 
if $M_1$ or $M_2$ is homeomorphic to an $n$-ball for any decomposition $M=M_1\natural M_2$ 
\cite{Tange:2017}. 
The boundary-sum irreducibility of $C_{n,k}$ 
is shown by almost the same method as in \cite{Tange:2017} except the primeness of the boundary. 
In our case, the boundary $\partial C_{n,k}$ is a hyperbolic $3$-manifold 
due to Theorem \ref{thm:Ishikawa-Koda}, and hence it is prime. 
\end{remark}
\section{Complexity of exotic pairs}

We recall that the (special) shadow-complexity of a pair of manifolds 
is defined by the maximum between their (special) shadow-complexities. 
In this section, 
we discuss the (special) shadow-complexity of exotic pairs 
of $4$-manifolds with boundary. 
\subsection{Low complexity (nonspecial case)}
We first give the proof of Theorem \ref{thm: lowest shadow-complexity is zero} 
which answers Question \ref{question} (3). 

\begin{proof}
[Proof of Theorem \ref{thm: lowest shadow-complexity is zero}]
\begin{figure}
\includegraphics[width=125mm]{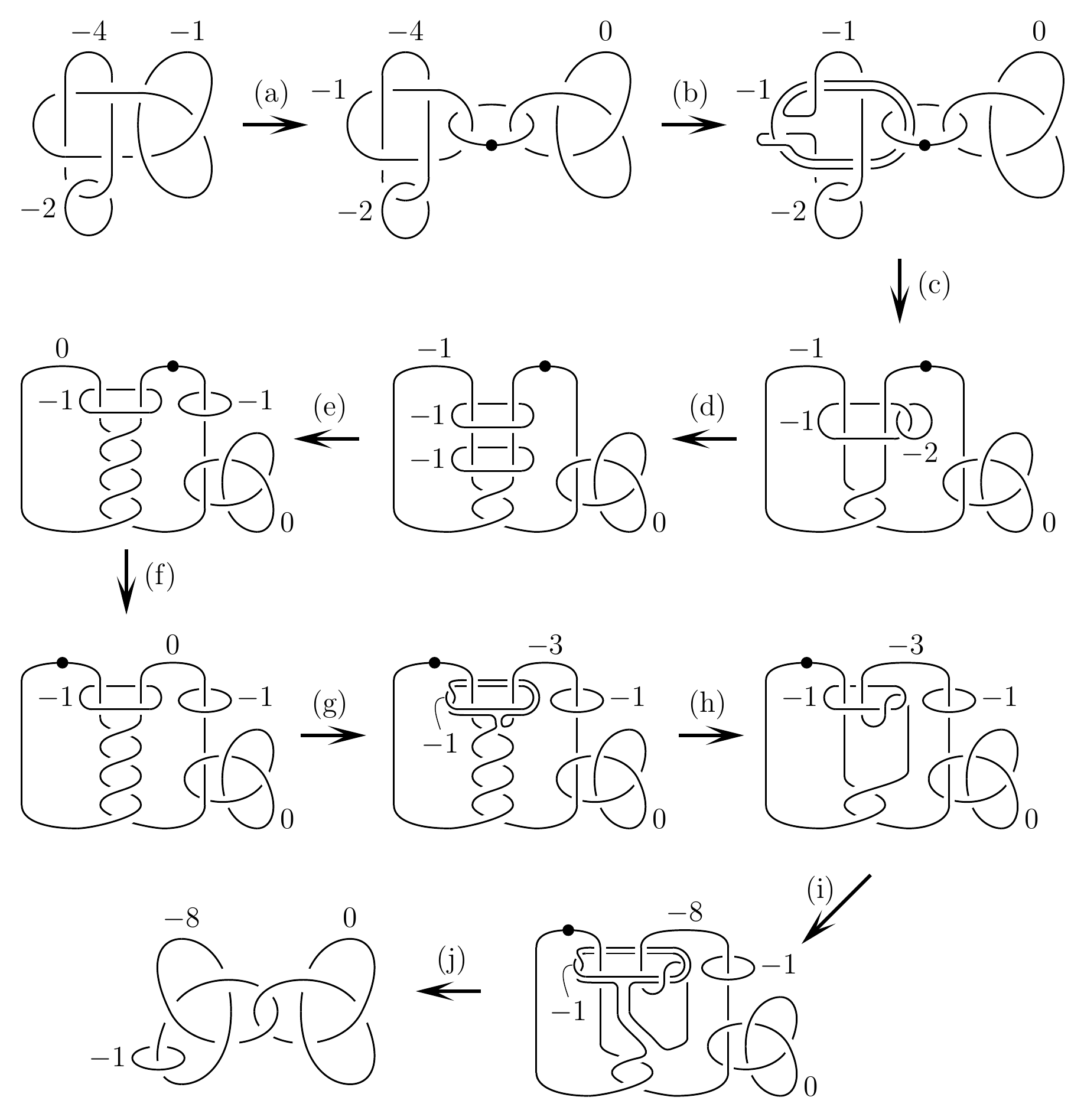}
\caption{Kirby calculus and plug twist: 
(a) create a cancelling pair. 
(b) handle slide. 
(c) isotopy. 
(d) handle slide. 
(e) handle slide. 
(f) plug twist.
(g) handle slide. 
(h) isotopy.
(i) handle slide. 
(j) delete a cancelling pair and isotopy.}
\label{plug}
\end{figure}
It is easily seen that both $W_1$ and $W_2$ have shadow-complexity zero as follows. 
For each $i=1,2$, we first project the link shown in Figure \ref{complexity zero pair} 
representing $W_i$ to a $2$-disk, 
and then glue a $2$-disk to each component of the link projection. 
The obtained polyhedron is a shadow of $W_i$. 
It collapses along a single boundary region, 
and then it becomes a shadow of $W_i$ that has no true vertices. 
Thus $\shco(W_i)=0$. 

We next check that $W_1$ and $W_2$ are exotic. 
The first diagram shown in Figure \ref{plug} represents $W_1$. 
We perform Kirby calculus in the moves (a)-(e). 
The move (f) is a \textit{plug} twist. 
A plug was introduced in \cite{Akbulut_Yasui:2008}, 
and it gives rise to many exotic pairs like a cork. 
The pair related by the move (f) is one of them 
(see \cite[Lemma 2.8 (3) for $m=1,\ n=2$]{Akbulut_Yasui:2008}). 
We perform Kirby calculus again in the moves (g)-(j), 
and we obtain the last diagram which represents $W_2$. 
Therefore $W_1$ and $W_2$ are exotic. The proof is completed. 
\end{proof}
\begin{remark}
The pair $(W_1,W_2)$ has special shadow-complexity at least $2$ since 
their second Betti number is $3$. 
Note that special polyhedra with at most 1 true vertex have 
second Betti number at most $2$. 
\end{remark}
\subsection{Low complexity (special case)}
Next we discuss Question \ref{question} (4). 
There are three homeomorphism types $X_1,\ X_2,\ X_3$ of special polyhedra 
without true vertices. 
For $i\in\{1,\ldots,6\}$, let $R_i$ be a copy of a unit disk, 
and provide a polar coordinate $(r_i,\theta_i)$ on $R_i$. 
Then the polyhedra $X_1,\ X_2,\ X_3$ are defined as 
\begin{align*}
X_1
&= R_1\ /\ (1,\theta_1)\sim(1,\theta_1+\frac{2\pi}{3}),\\ 
X_2
&= R_2\cup R_3\ /\ (1,\theta_2)\sim(1,2\theta_3),\\ 
X_3
&= R_4\cup R_5\cup R_6\ /\ (1,\theta_4)\sim(1,\theta_5)\sim(1,\theta_6). 
\end{align*}
It is easy to see that 
\[
\gl_2(R_i)=\begin{cases}
    1 &  (i=3)\\
    0 &  \text{otherwise}.
  \end{cases}
\]

Let $M_{(X_1;\gl(R_1))}$, 
$M_{(X_2;\gl(R_2),\gl(R_3))}$ and 
$M_{(X_3;\gl(R_4),\gl(R_5),\gl(R_6))}$ 
be the $4$-manifolds with boundary constructed from $X_1$, $X_2$ and $X_3$, respectively, 
by equipping with gleams $\gl(R_1),\ldots,\gl(R_6)$. 
\begin{proposition}
\label{prop: non zero scsp}
Let $l,m,n,l',m',n'$ be integers and $r,r'$ be half integers. 
\begin{enumerate}
\item
The following are equivalent: 
 \begin{itemize}
 \item[(i)] 
 $M_{(X_1;n)}$ and $M_{(X_1;n')}$ are homeomorphic, 
 \item[(ii)] 
 they are diffeomorphic,
 \item[(iii)] 
 $n=n'$.
 \end{itemize}
\item
The following are equivalent: 
 \begin{itemize}
 \item[(i)] 
 $M_{(X_2;n,r)}$ and $M_{(X_2;n',r')}$ are homeomorphic,
 \item[(ii)] 
 they are diffeomorphic,
 \item[(iii)] 
 $(n,r)=(n',r')$, or 
$n=n'\pm4$ and $r=-r'=\mp\frac{1}{2}$.
 \end{itemize}
\item
The following are equivalent: 
\begin{enumerate}
 \item[(i)] 
 $M_{(X_3;l,m,n)}$ and $M_{(X_3;l',m',n')}$ are homeomorphic, 
 \item[(ii)] 
 they are diffeomorphic,
 \item[(iii)] 
\begin{itemize}
\item 
$\{l,m,n\}=\{l',m',n'\}$, 
\item
$\{l,m,n\}=\{\pm1,\mp2,\mp2\},\ \{l',m',n'\}=\{\mp1,0,0\}$, 
\item
$\{l,m,n\}=\{0,0,\mp1\},\ \{l',m',n'\}=\{\pm1,\mp2,\mp2\}$, 
\item
$\{l,m,n\}=\{\pm1,a,b\},\ \{l',m',n'\}=\{\mp1,a\pm2,b\pm2 \}$ 
for some $a,b\in\Z$, or
\item
$\{l,m,n\}=\{1,-1,a\},\ \{l',m',n'\}=\{1,-1,a'\}$ 
for some $a,a'\in\Z$ with $a\equiv a' \pmod 2$. 
\end{itemize}
\end{enumerate}
\end{enumerate}
\end{proposition}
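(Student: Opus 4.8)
The plan is to run the cycle (iii) $\Rightarrow$ (ii) $\Rightarrow$ (i) $\Rightarrow$ (iii) separately for each of the three families, since (ii) $\Rightarrow$ (i) is immediate. The first task, common to all three parts, is to turn each shadowed polyhedron into an explicit Kirby diagram. Using Turaev's reconstruction together with the dictionary between gleams and $2$-handle framings recorded in Section 2, I would present $M_{(X_i;\cdots)}$ as a small $2$-handlebody: a regular neighborhood of the single triple circle is a $4$-dimensional handlebody of $0$- and $1$-handles, and each disk region $R_j$ contributes a $2$-handle whose framing is controlled by $\gl(R_j)$. A quick computation of $\pi_1$ and $H_*$ of the spines already shows $M_{(X_1;n)}$ has $\pi_1=\Z/3$ and $b_2=0$, while $M_{(X_2;n,r)}$ and $M_{(X_3;l,m,n)}$ are simply connected with $b_2=1$ and $b_2=2$ respectively. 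From the handlebody pictures I can read off the intersection form as an explicit function of the gleams, together with the boundary $3$-manifold.

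For the direction (iii) $\Rightarrow$ (ii) I would exhibit, for each coincidence in the lists, an explicit sequence of Kirby moves. The shifts ``$n\mapsto n'\pm4$ with $r=-r'=\mp\frac12$'' in part (2) and ``$\{\pm1,a,b\}\mapsto\{\mp1,a\pm2,b\pm2\}$'' in part (3) have the shape of blow-ups/blow-downs along a $\pm1$-framed region, with the size of the framing shift governed by the linking numbers of the relevant $2$-handles and, in part (2), by the degree-$2$ (M\"obius) attachment of $R_3$. The relation ``$\{1,-1,a\}\sim\{1,-1,a'\}$ for $a\equiv a'\pmod 2$'' I would realize by repeatedly sliding the third $2$-handle over the cancelling pair $\{1,-1\}$, which alters its framing in steps of $2$, and the isolated coincidences $\{\pm1,\mp2,\mp2\}\sim\{\mp1,0,0\}$ as two successive blow-downs. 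Each move must of course be checked to track the gleam/framing data correctly.

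The direction (i) $\Rightarrow$ (iii) is where I expect the genuine work, and it splits according to $\pi_1$. For the simply connected families $X_2$ and $X_3$ I would invoke Freedman's homeomorphism classification of simply connected $4$-manifolds in its relative form for manifolds with boundary (Boyer), so that the homeomorphism type is pinned down by the isometry class of the intersection form together with the induced identification of the boundary and its linking structure. I would then compute these forms explicitly from Step A and verify that an isometry compatible with the boundary forces exactly the numerical relations of (iii) and no others; I expect the parity constraint $a\equiv a'\pmod 2$ to surface as a $\Z_2$-valued invariant (a Rokhlin/$w_2$-type or boundary linking-form quantity) that survives blow-up. The family $X_1$ is \emph{not} simply connected, so Freedman does not apply directly; here I would recover $n$ from a finer invariant of the boundary — concretely the linking form of the Seifert/lens-type manifold $\partial M_{(X_1;n)}$ — and show it both determines and is determined by $n$, yielding the clean statement $n=n'$.

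The main obstacle is precisely establishing completeness in the direction (i) $\Rightarrow$ (iii): I must be certain that the chosen data (intersection form plus boundary identification, and for $X_1$ the group $\pi_1=\Z/3$ together with the linking form) constitute a \emph{complete} set of homeomorphism invariants for these bounded — and in one case non-simply-connected — $4$-manifolds, so that the coincidence lists in (iii) are exhaustive rather than merely sufficient. Carefully matching the relative version of the Freedman/Boyer classification to the forms computed in Step A, and isolating the invariant responsible for the $\pmod 2$ condition, is the delicate point; by contrast the Kirby-calculus verifications feeding (iii) $\Rightarrow$ (ii), though numerous, become routine once the diagrams of Step A are in hand.
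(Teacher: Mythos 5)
Your overall skeleton matches the paper's (read Kirby diagrams off the shadows, compute intersection forms and boundary $3$-manifolds, prove (iii)$\Rightarrow$(ii) by explicit Kirby moves and (i)$\Rightarrow$(iii) by invariants, and your Step A computations agree with the paper's), but your plan for part (1) would fail at a concrete step. You propose to recover $n$ from the linking form of $\partial M_{(X_1;n)}$. However, $\partial M_{(X_1;n)}$ is the Seifert fibered space $S^2(0;(3,1),(3,-1),(n,1))$ for $n\neq 0$, and its first homology has order $9$ for \emph{every} $n$; hence only finitely many isomorphism classes of linking forms (on $\Z/9$ or $\Z/3\oplus\Z/3$) can occur, while infinitely many values of $n$ must be distinguished. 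The same objection applies to $\pi_1(M)=\Z/3$ and to $H_*$, which do not depend on $n$ at all. The paper instead uses the classification of Seifert fibered spaces: for $n\neq 0,\pm1$ the fibering above is unique, so its Seifert invariants recover $n$, and the exceptional values are treated separately ($\partial M\cong L(9,\mp4)$ for $n=\pm1$, and $L(3,1)\# L(3,-1)$ for $n=0$). Some appeal of this kind to Seifert/lens-space classification is also unavoidable in parts (2) and (3): the paper's case analysis repeatedly uses uniqueness of Seifert fibrations and the full homeomorphism classification of lens spaces ($L(p,q)\cong L(p,q')$ if and only if $q'\equiv\pm q^{\pm1} \bmod p$), which is strictly finer than linking-form data, so "boundary linking structure" is not an adequate substitute there either.

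Second, the step you single out as the main obstacle --- establishing via Freedman/Boyer that the intersection form plus boundary data form a \emph{complete} set of homeomorphism invariants --- is never needed, and as you state it ("form plus boundary identification pins down the homeomorphism type") it is not correct without additional compatibility data and hypotheses from Boyer's theorem. The paper's logic is the cycle (iii)$\Rightarrow$(ii)$\Rightarrow$(i)$\Rightarrow$(iii): for (i)$\Rightarrow$(iii) one only uses the \emph{trivial} direction that a homeomorphism forces the intersection forms to be isometric and the boundaries to be homeomorphic, and then solves exactly which parameter values make these two invariants agree; the exhaustiveness of the list (iii) is precisely that computation, and completeness of the invariants is a consequence of the closed cycle of implications, never an input. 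So the genuine work lies not in relative Freedman theory but in the elementary-but-lengthy case analysis ($n=0,\pm1$, $r=\pm\frac{1}{2}$, $lmn=0$, definite versus indefinite forms in part (3)), which your proposal compresses into a single sentence. Your plan for (iii)$\Rightarrow$(ii) via blow-downs and handle slides is fine and is exactly what the paper leaves to the reader as "easy."
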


\begin{proof}
(1)
(iii)$\Rightarrow$(ii)$\Rightarrow$(i) is obvious. 
We assume (i) and prove (iii). 
For simplicity we write 
$M$ and $M'$ instead of $M_{(X_1;n)}$ and $M_{(X_1;n')}$, respectively.
\begin{figure}
\includegraphics[width=70mm]{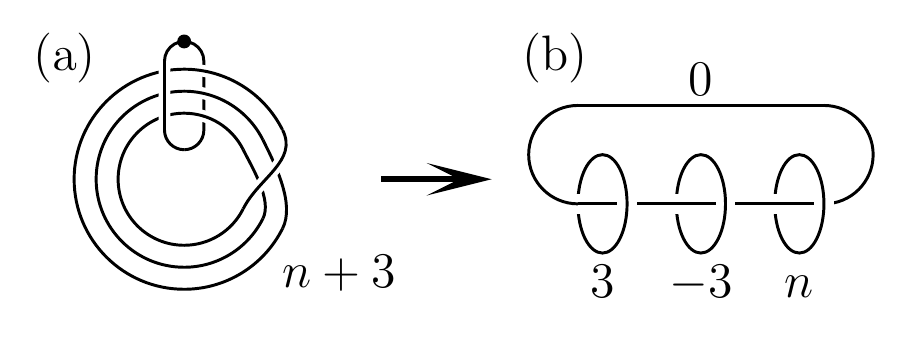}
\caption{The Kirby diagram and the surgery diagram for $M_3$ and its boundary, 
respectively.}
\label{X1}
\end{figure}
Figure \ref{X1}-(a) shows a Kirby diagram of $M$. 
By replacing $\bullet$ with $0$, 
the diagram turns to a surgery diagram of the boundary $\partial M$. 
We get the diagram pictured in Figure \ref{X1}-(b) by adding a cancelling pair.
This $3$-manifold $\partial M$
is Seifert fibered and has the form $S^2(0;(3,1),(3,-1),(n,1))$ except for $n=0$. 
This is a unique fibering except for $n\ne\pm1$. 
If $n=\pm 1$, then $\partial M \cong L(9,\mp 4)$. 
Thus if $n\ne n'$, then $M$ and $M'$ are not homeomorphic. 
Note that $\partial M \cong L(3,1)\# L(3,-1)$ for $n=0$, and it is not Seifert fibered. 

\vspace{3mm}
\noindent
(2) 
(iii)$\Rightarrow$(ii)$\Rightarrow$(i) is easy. 
We prove (i)$\Rightarrow$(iii). 
For simplicity we write $M$ and $M'$ 
instead of $M_{(X_2;n,r)}$ and $M_{(X_2;n',r')}$, respectively. 
A Kirby diagram of $M$ is pictured in Figure \ref{X2}-(a), 
\begin{figure}
\includegraphics[width=70mm]{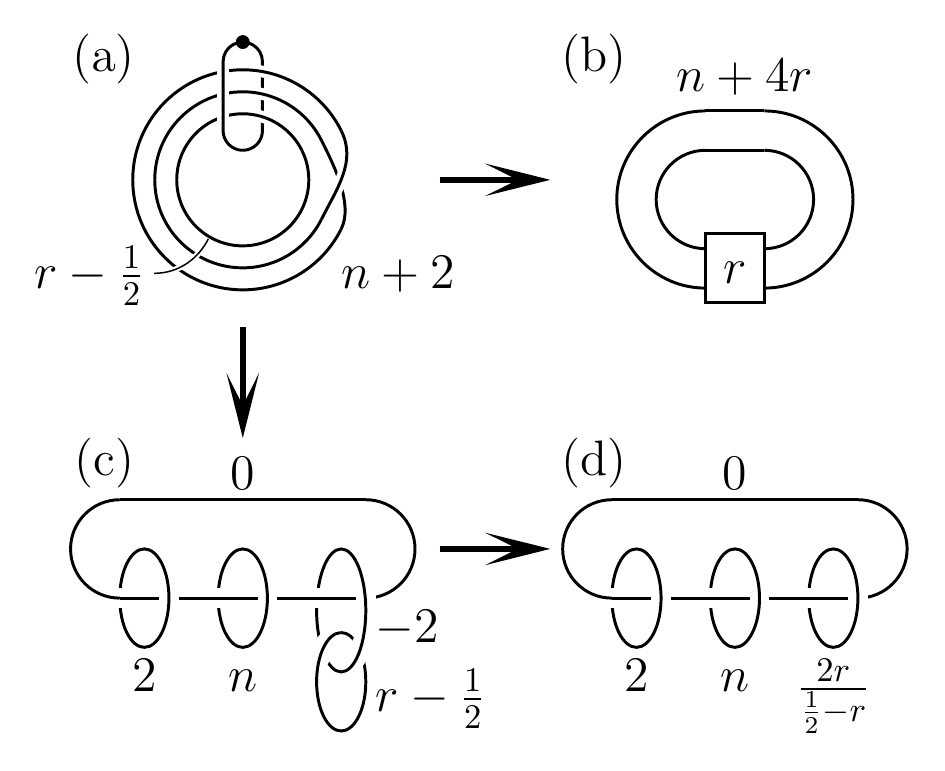}
\caption{The Kirby diagram and the surgery diagram for $M_3$ and its boundary.}
\label{X2}
\end{figure}
and we obtain the diagram of $M$ shown in Figure\ref{X2}-(b) by deleting a cancelling pair. 
Note that the box labeled $r$ represents $2r$ times half twists. 
The diagram says that the intersection form of $M$ is $\langle n+4r\rangle$. 
Smilarly, the one of $M'$ is $\langle n'+4r'\rangle$. Hence $n+4r=n'+4r'$ from (i). 
On the other hand, we obtain a surgery diagram of the boundary $\partial M$ 
shown in Figure\ref{X2}-(c) by replacing $\bullet$ with $0$ and adding a cancelling pair. 
A slum-dunk gives the diagram shown in Figure\ref{X2}-(d). 
It shows that $\partial M$ is the Seifert fibered space $S^2(0;(2,1),(n,1),(2r,\frac{1}{2}-r))$
for $n\ne0$, and so is $\partial M'$ for $n'\ne0$. 


\underline{Case $n\ne0,\pm1$ and $r\ne\pm\frac{1}{2}$} : 
The $3$-manifold $\partial M$ admits a unique Seifert fibering, and so is $\partial M'$. 
Then possible cases are $(n,2r)=(n',2r')$ or $(n,2r)=(2r',n')$. 
In the latter one, 
the intersection forms of $M$ and $M'$ are $\langle n+4r\rangle$ 
and $\langle n'+4r'\rangle = \langle 2n+2r\rangle $, respectively. 
It follows that $n=2r$, and thus $(n,2r)=(n',2r')$. 

\underline{Case $n=0,\ \pm1$ or $r=\pm\frac{1}{2}$} : In this case 
the boundary $3$-manifold $\partial M$ admits nonunique or no Seifert fiberings. 
Thus it follows that $n'=0,\ \pm1$ or $r'=\pm\frac{1}{2}$. 
The topological types of $\partial M$ is as follows:
\begin{align*}
n=0\ 
&\Longrightarrow\ 
\partial M \cong \RP^3\# L(2r,r-\frac{1}{2})\cong \RP^3\# L(2r,-2), \\
n=\pm1\ 
&\Longrightarrow\ 
\partial M\cong S^2(0;(2,1\pm2),(2r,\frac{1}{2}-r)) \cong L(4r\pm1,-4), \\
r=\pm\frac{1}{2}\ 
&\Longrightarrow\ 
\partial M\cong S^2(0;(2,\pm1),(n,1))\cong L(n\pm 2,-1).
\end{align*}
We now summarize the possible cases in Table \ref{table:complexity_0_(2)}. 
\begin{table}
\begin{center}
\begin{tabular}[c]{|c||c|c|c|c|c|}
\hline
\parbox[c][\myheighta][c]{0cm}{}
& \lower 5pt \vbox{\hbox{$n=1$}\hbox{$\wedge\ r\ne\pm\frac{1}{2}$}} 
& \lower 5pt \vbox{\hbox{$n=-1$}\hbox{$\wedge\ r\ne\pm\frac{1}{2}$}} 
& \lower 5pt \vbox{\hbox{$n=0$}\hbox{$\wedge\ r\ne\pm\frac{1}{2}$}} 
& $r=\frac{1}{2}$ & $r=-\frac{1}{2}$\\ 
\hline \hline
\parbox[c][\myheighta][c]{0cm}{}
\lower 5pt \vbox{\hbox{$n'=1$}\hbox{$\wedge\ r'\ne\pm\frac{1}{2}$}} 
& ${}^{(\star1)}r=r'$ & - & - & - & -\\
\hline
\parbox[c][\myheighta][c]{0cm}{}
\lower 5pt \vbox{\hbox{$n'=-1$}\hbox{$\wedge\ r'\ne\pm\frac{1}{2}$}} 
& ${}^{(\star2)}$ - & ${}^{(\star1)}r=r'$ & - & - & -\\
\hline
\parbox[c][\myheighta][c]{0cm}{}
\lower 5pt \vbox{\hbox{$n'=0$}\hbox{$\wedge\ r'\ne\pm\frac{1}{2}$}} 
& ${}^{(\star3)}$ - & ${}^{(\star3)}$ - & ${}^{(\star1)}r=r'$ & - & -\\
\hline
\parbox[c][\myheight][c]{0cm}{}
$r'=\frac{1}{2}$
& ${}^{(\star4)}$ - & ${}^{(\star5)}$ - & ${}^{(\star3)}$ - & ${}^{(\star1)}n=n'$ & $n=n'+4$\\
\hline
\parbox[c][\myheight][c]{0cm}{}
$r'=-\frac{1}{2}$
& ${}^{(\star4)}$ - & ${}^{(\star5)}$ - & ${}^{(\star3)}$ - & ${}^{(\star6)}n=n'-4$ & ${}^{(\star1)}n=n'$\\
\hline
\end{tabular}
\caption{}
\label{table:complexity_0_(2)}
\end{center}
\end{table}
We only show the cases $(\star1)$--$(\star5)$ in the table
because the others can be shown in much the same way as $(\star1)$--$(\star5)$. 
\begin{itemize}
\item[$(\star1)$]
It is obvious from $n+4r=n'+4r'$. 
\item[$(\star2)$]
From $n+4r=n'+4r'$, we have $r=r'-\frac{1}{2}$. 
This contradicts our assumption that both $r$ and $r'$ are half integers. 
\item[$(\star3)$]
Either $\partial M$ or $\partial M'$ is prime and the other is not. 
\item[$(\star4)$]
It is necessary that $|4r+1|=|n'\pm2|$ and 
$4\equiv 1 \pmod{|4r+1|}$
by the classification of lens spaces. 
Hence $r=\frac{1}{2}$ or $-\frac{1}{2}$, which is a contradiction. 
\item[$(\star5)$]
We have $|4r-1|=|n'\pm\frac{1}{2}|$ and $4\equiv 1\pmod{|4r-1|}$ by the classification of lens spaces. 
It follows that $r=-\frac{1}{2}$. It is a contradiction. 
\item[$(\star6)$]
It is obvious from $n+4r=n'+4r'$. 
\end{itemize}
The proof is completed.

\vspace{3mm}
\noindent
(3) 
(iii)$\Rightarrow$(ii)$\Rightarrow$(i) is easy. 
We assume (i) and prove (iii). 
To shorten notation, 
we write $M$ and $M'$ instead of $M_{(X_3;l,m,n)}$ and $M_{(X_3;l',m',n')}$, respectively.
\begin{figure}
\includegraphics[width=70mm]{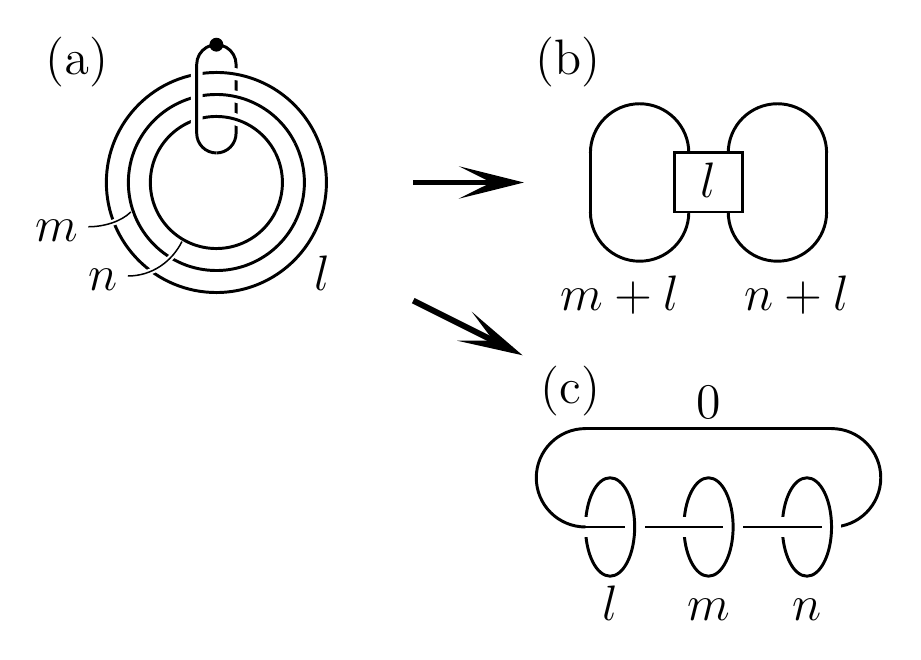}
\caption{The Kirby diagram and the surgery diagram for $M_3$ and its boundary.}
\label{X3}
\end{figure}
The $4$-manifold $M$ has the diagram shown in Figure \ref{X3}-(a).
Deleting a cancelling pair yields the diagram pictured in Figure \ref{X3}-(b). 
Thus the intersection forms of $M$ and $M'$ are 
$\displaystyle 
  \left(
    \begin{array}{cc}
      m+l & l \\
      l & n+l 
    \end{array}
  \right)$ and 
$\displaystyle 
  \left(
    \begin{array}{cc}
      m'+l' & l' \\
      l' & n'+l'
    \end{array}
  \right)$, %
respectively.
By replacing $\bullet$ with $0$, 
the diagram shown in Figure \ref{X3}-(a) turns to 
a surgery diagram of the boundary $\partial M$ shown in Figure \ref{X3}-(c). 
This $3$-manifold $\partial M$ is a Seifert fibered space of type $S^2(0;(l,1),(m,1),(n,1))$
unless $lmn=0$. 

\underline{Case where at least one of $l,m,n$ is $0$} : 
The boundary is not Seifert fibered. 
It follows that at least one of $l',m',n'$ is also $0$. 
We assume that $l=0$ and $l'=0$. 
Then the intersection forms of $M$ and $M'$ are $\langle m\rangle\oplus\langle n\rangle$ and 
$\langle m'\rangle\oplus\langle n'\rangle$. 
Thus $\{l,m,n\}$ must coincides with $\{l',m',n'\}$ by (i). 

\underline{Case where at least one of $l,m,n$ is $\pm1$ and the others are not $0$} : 
Suppose that $l=\pm1$. 
In this case, 
the boundary $\partial M$ admits a Seifert fibering with at most two exceptional fibers. 
Then so is $\partial M'$ by (i), and we may assume that $|l'|=1$. 
%

Put $\mu = m+l$, $\nu =n+l$, $\mu' = m'+l'$ and $\nu' =n'+l'$ for simplicity. 
It is easily seen that $\partial M\cong L(\mu\nu-1,\mu)$ 
and $\partial M\cong L(\mu'\nu'-1 ,\mu)$. 
The intersection form of $M$ is 
$\displaystyle 
  \left(
    \begin{array}{cc}
      \mu & 1 \\
      1 & \nu 
    \end{array}
  \right)$. 
Then it can be checked that 
\[
M\text{ is }\begin{cases}
\text{positive definite} &\hspace{-2mm} \text{if }\mu,\nu >0\text{ and }(\mu,\nu )\ne(1,1),\\
\text{negative definite} &\hspace{-2mm} \text{if }\mu,\nu <0\text{ and }(\mu,\nu )\ne(-1,-1),\\
\text{indefinite} &\hspace{-2mm} \text{otherwise}. 
  \end{cases}
\]

Now we assume that $M$ is indefinite, then so is $M'$ and we have 
\begin{itemize}
\item
$(\mu,\nu )=(1,1),(-1,-1)$, or
 \item 
$\mu\nu\leq0$,
\end{itemize}
and 
\begin{itemize}
\item
$(\mu',\nu' )=(1,1),(-1,-1)$, or
 \item 
$\mu'\nu'\leq0$. 
\end{itemize}
Assume that $\mu\nu=0$. 
This is equivalent to that $\partial M$ is homeomorphic to $S^3$ 
from $\partial M\cong L(\mu\nu-1,\mu)$. 
Thus if $\mu=0$, then we can assume $\mu'=0$. 
In this case, 
the intersection form is isomorphic to 
$\displaystyle \left(\begin{array}{cc}0 & 1 \\ 1 & 0 \end{array}\right)$ if $\nu$ is even, 
and $\langle 1\rangle\oplus\langle -1\rangle$ if $\nu$ is odd. 
Therefore $\nu\equiv\nu'\pmod{2}$.
Hence $(l,m,n)=(\pm1,\mp1,a)$ 
and $(l',m',n')=(\pm1,\mp1,a')$ or $(\mp1,\pm1,a')$
for some $a,a'\in\Z$ with $a\equiv a'\pmod{2}$. 
It follows that $\{l,m,n\}=\{1,-1,a\}$ and $\{l',m',n'\}=\{1,-1,a'\}$ 
for some $a,a'\in\Z$ with $a\equiv a'\pmod{2}$. 

Assume that $M$ is indefinite and $(\mu,\nu )=(1,1),(-1,-1)$. 
We then have $(l,m,n)=(\pm1,\mp2,\mp2)$ or $(l,m,n)=(\pm1,0,0)$. 
In either case, 
the intersection form of $M$ is isomorphic to 
$\langle 1\rangle\oplus\langle 0\rangle$ or $\langle -1\rangle\oplus\langle 0\rangle$. 
Note that $(\mu,\nu )=(1,1),(-1,-1)$ if and only if $\partial M$ is homeomorphic to $S^2\times S^1$. 
Then we have $(\mu',\nu' )=(1,1)$ or $(-1,-1)$ from $\partial M'\cong L(\mu'\nu'-1,\mu')$. 
It follows that 
\begin{itemize}
 \item 
$\{l,m,n\}=\{l',m',n'\}$, 
\item
$\{l,m,n\}=\{\pm1,\mp2,\mp2\}$ and $\{l',m',n'\}=\{\mp1,0,0\}$, or 
\item
$\{l,m,n\}=\{\mp1,0,0\}$ and $\{l',m',n'\}=\{\pm1,\mp2,\mp2\}$.
\end{itemize}

We next consider the case where $M$ is indefinite and $\mu\nu <0$. 
Then $\mu'\nu' <0$.
Since the boundaries $L(\mu\nu-1,\mu)$ and $L(\mu'\nu'-1 ,\mu')$ are homeomorphic, 
it follows that 
$\mu\nu=\mu'\nu'$, and $\mu\equiv\mu'$ or $\mu\mu'\equiv1 \pmod{-\mu\nu+1}$.
If $\mu\equiv\mu'$, then $\mu=\mu'$ since $-\mu\nu+1>|\mu|,|\mu'|>0$. 
If $\mu\mu'\equiv1$, then $\mu=\nu'$ since 
\[
\mu\mu'\equiv1 \Rightarrow \mu\mu'\nu'\equiv\nu' \Leftrightarrow \mu\equiv\nu'. 
\]
Hence $\{\mu,\nu\}=\{\mu',\nu'\}$. 
Therefore 
\begin{itemize}
 \item 
$\{l,m,n\}=\{l',m',n'\}$, or
\item
$\{l,m,n\}=\{\pm1,a,b\}$ and $\{l',m',n'\}=\{\mp1,a\pm2,b\pm2 \}$ for some $a, b\in\Z$. 
\end{itemize}

We turn to the case where $M$ and $M'$ are positive definite. 
Then we have $\mu,\nu>0$ and $\mu',\nu'>0$. 
In the same way as above, we have
\begin{itemize}
 \item 
$\{l,m,n\}=\{l',m',n'\}$, or
\item
$\{l,m,n\}=\{\pm1,a,b\}$ and $\{l',m',n'\}=\{\mp1,a\pm2,b\pm2 \}$ for some $a, b\in\Z$. 
\end{itemize}
The proof for the case of negative definite is also similar.

\underline{Case $|l|,|m|,|n|\geq2$} : 
The boundary $3$-manifold $\partial M$ has a unique Seifert fibering. 
Then we have $\{ l,m,n\}=\{ l',m',n'\}$, 
which completes the proof. 
\end{proof}
Costantino gave an upper estimate to the answer of Question \ref{question} (4) 
with manifolds shown in Figure \ref{Akbulut's pair}. 
These manifolds has been known to be exotic by Akbulut \cite{Akbulut:1991}. 
He mentioned that $M_1$ and $M_2$ have special shadow-complexity at most $1$ and $3$, respectively. 
Thus the upper estimate is $3$. 
We can easily strengthen this by Kirby calculus. 
The move (a) in Figure \ref{Akbulut's pair} is creating a cancelling pair, 
and the move (b) is an isotopy. 
Then it is immediate that $M_2$ has special shadow-complexity at most $2$. 
Combining this result and Proposition \ref{prop: non zero scsp} gives 
Theorem \ref{thm: 1 or 2}.　
\begin{figure}
\includegraphics[width=110mm]{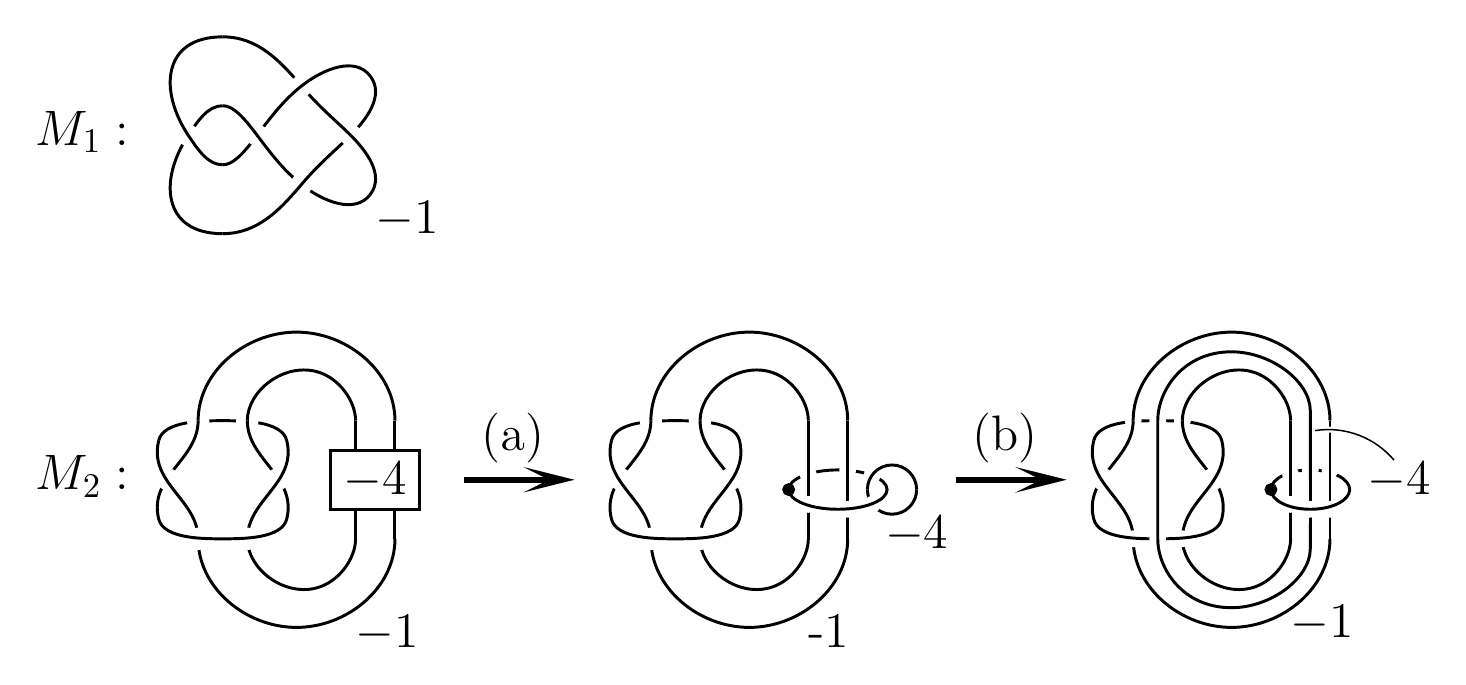}
\caption{The exotic pair having special shadow-complexity at most $2$. }
\label{Akbulut's pair}
\end{figure}

\subsection{Large complexity}
Here we show two lemmas to prove Corollary \ref{cor: exotic pair}. 
Let $W_{n,k}$ (resp. $W'_{n,k}$) be the $4$-manifold obtained from $C_{n,k}$ by attaching 
a $2$-handle with $-1$ framing along a meridian of the dotted circle 
(resp. the $0$-framed circle) in the Kirby diagram pictured in Figure \ref{Kirby diag of Cn}. 
\begin{lemma}
\label{lem: upper bound of exotic}
$\displaystyle \spshco(W_{n,k},W'_{n,k})\leq \sum_{j=1}^{n+1}l_j+n-1$.
\end{lemma}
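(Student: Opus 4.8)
The plan is to reuse the shadow of $C_{n,k}$ constructed in the proof of Lemma \ref{lem: upper bound} and to incorporate the single extra $2$-handle by gluing one additional disk along the projected meridian. First I would fix the handle decomposition of $C_{n,k}$ used there, with one $0$-handle, $n+1$ $1$-handles and $n+1$ $2$-handles, together with the simple polyhedron $P$ (the annulus with $n$ pairs of pants attached, Figure \ref{to_shadow2}) that is a shadow of the union of the $0$- and $1$-handles. Since $W_{n,k}$ (resp.\ $W'_{n,k}$) is obtained from $C_{n,k}$ by attaching exactly one more $2$-handle, it shares the same $0$- and $1$-handles, so $P$ is again a shadow of that sub-handlebody.

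Next I would project the $n+1$ old attaching circles exactly as in Figure \ref{to_shadow2}, together with the attaching circle of the new $2$-handle, namely the meridian of the dotted circle (resp.\ the $0$-framed circle) of Figure \ref{Kirby diag of Cn}, onto $P$ in generic position. Because this meridian is an unknot linking a single component once, it can be arranged to project to a simple closed curve meeting the existing singular set and the other projected circles in only a few points. I would then attach $n+2$ disks along these curves, assigning the new disk the gleam dictated by the $-1$ framing via the local rule of Figure \ref{local_gleam}, so that by Turaev's reconstruction the resulting simple polyhedron is a shadow of $W_{n,k}$ (resp.\ $W'_{n,k}$), and finally collapse along the boundary regions, exactly as in Lemma \ref{lem: upper bound}, to obtain a \emph{special} shadow.

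The crux is then the bookkeeping of true vertices: after collapse the $n+1$ old circles contribute the $\sum_{j=1}^{n+1}l_j+n-3$ true vertices already counted in Lemma \ref{lem: upper bound}, and I would show that the new meridional disk raises this count by at most $2$, yielding the asserted bound $\sum_{j=1}^{n+1}l_j+n-1$. For $W'_{n,k}$ the construction is entirely parallel, using the meridian of the $0$-framed component; by the symmetry of the underlying $2$-bridge link exploited in Lemma \ref{lem: cork} (condition (1) of Proposition \ref{prop:cork of Mazur type}), the projected meridian exhibits the same combinatorial behaviour and hence produces the same count. Since the special shadow-complexity $\spshco(W_{n,k},W'_{n,k})$ is the maximum of the two special shadow-complexities, the inequality follows.

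The main obstacle I anticipate is controlling this net change in the number of true vertices while guaranteeing that the final polyhedron is genuinely special, which is what makes the estimate an estimate on $\spshco$ rather than merely on $\shco$. I would need to position the projected meridian so that it runs over the existing triple lines and the other projected circles minimally, track precisely how many true vertices each such crossing creates, and then verify through the collapse that no non-disk region survives and that the surviving net increase is exactly the constant $2$ rather than a larger $O(1)$; it is this last point that pins down the $-1$ in the stated bound.
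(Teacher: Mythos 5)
Your plan for $W_{n,k}$ matches the paper's: reuse the polyhedron $P$ from Lemma \ref{lem: upper bound}, project the extra attaching circle, cap with a disk, and collapse. But the pivotal step of your argument --- the claim that, ``by the symmetry of the underlying $2$-bridge link,'' the meridian of the $0$-framed circle exhibits the same combinatorial behaviour as the meridian of the dotted circle --- is false, and it is exactly the case of $W'_{n,k}$ that determines the stated bound. The exchange isotopy of Proposition \ref{prop:cork of Mazur type}\,(1) swaps the two components of the link but does \emph{not} swap the dot and the $0$-framing; applying it to the diagram of $W'_{n,k}$ still leaves the $-1$-framed meridian encircling the $0$-framed component. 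This dot/$0$ asymmetry is precisely the cork twist, i.e.\ the reason $(W_{n,k},W'_{n,k})$ is an exotic pair, and it changes the shadow construction completely: the dotted circle is absorbed into the polyhedron $P$ as a $1$-handle, whereas the $0$-framed circle survives as a projected curve on $P$. Consequently the two meridians behave differently. The meridian of the dotted circle can be projected onto $P$ with \emph{no} new crossings (it runs over the part of $P$ coming from that $1$-handle), but its disk obstructs one boundary collapse, so the count rises by $1$ (the paper gets $\spshco(W_{n,k})\leq \sum_{j=1}^{n+1}l_j+n-2$). The meridian of the $0$-framed circle, by contrast, has linking number $1$ with a curve that is itself projected onto $P$, so its projection must cross that curve (generically in two points), each crossing producing a true vertex; one must then separately verify that no collapse is blocked and that the result is special. ``Same count by symmetry'' is therefore not available, and your proposal defers exactly the verification that would replace it.

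For comparison, the paper avoids projecting the second meridian altogether: a handle slide shows $W'_{n,k}\cong Y\# \mCP$, where $Y$ has the same Kirby diagram as $C_{n,k}$ up to framing coefficient and hence the same special shadow $P''$ (framings affect only gleams, not the polyhedron). Then $W'_{n,k}$ has the shadow $P''$ with a bubble, which is made special by a $(0\to2)$-move at the cost of exactly $2$ true vertices, giving $\spshco(W'_{n,k})\leq \sum_{j=1}^{n+1}l_j+n-1$. Your direct-projection route for $W'_{n,k}$ can likely be completed (two crossings, positioned away from the boundary regions of $P$, do yield the same bound), but as written the proposal substitutes an invalid symmetry argument for that computation, so it has a genuine gap.
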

\begin{proof}
The manifolds $W_{n,k}$ and $W'_{n,k}$ have Kirby diagrams as shown in Figures 
\ref{W1}-(a) and \ref{W2}-(a), respectively. 
\begin{figure}
\includegraphics[height=60mm]{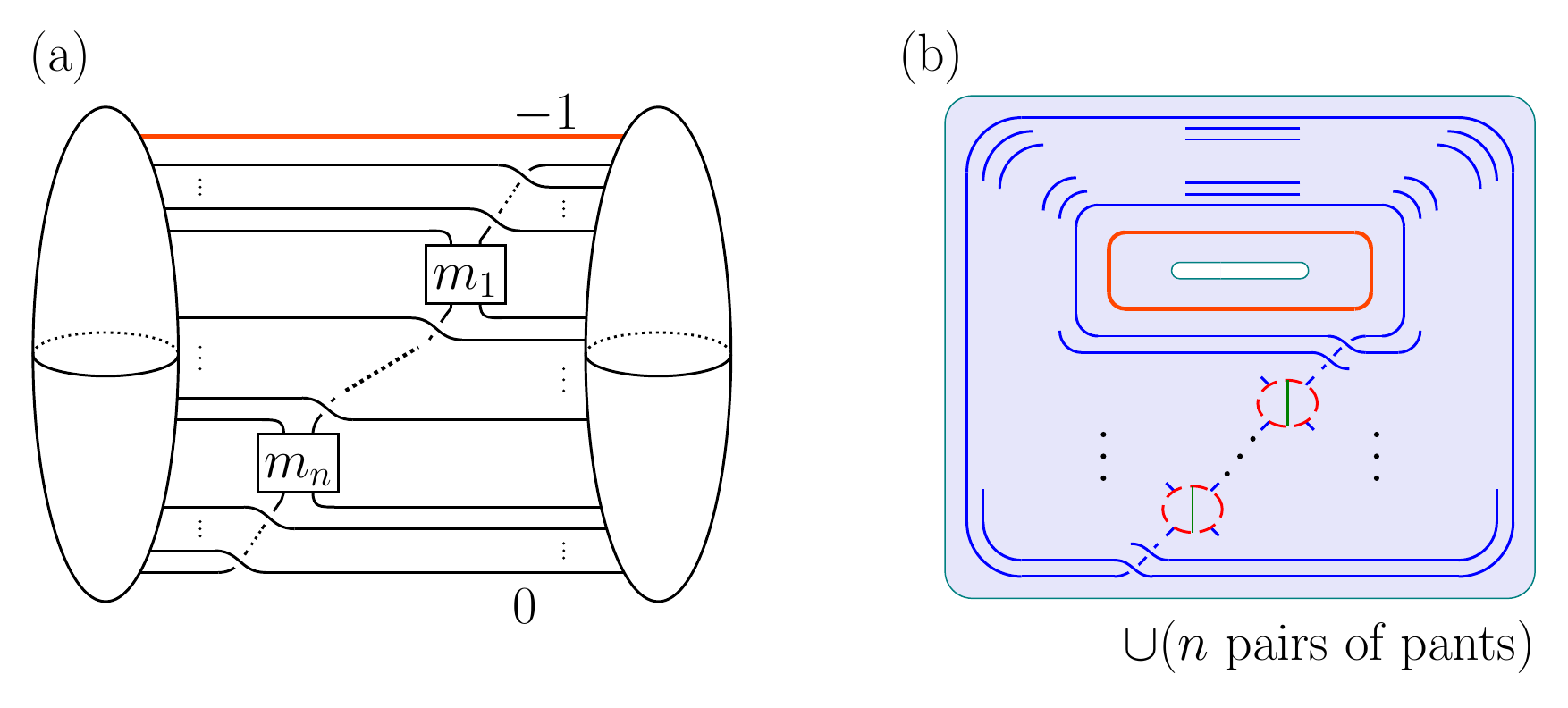}
\caption{The Kirby diagrams of $W_{n,k}$.}
\label{W1}
\includegraphics[height=50mm]{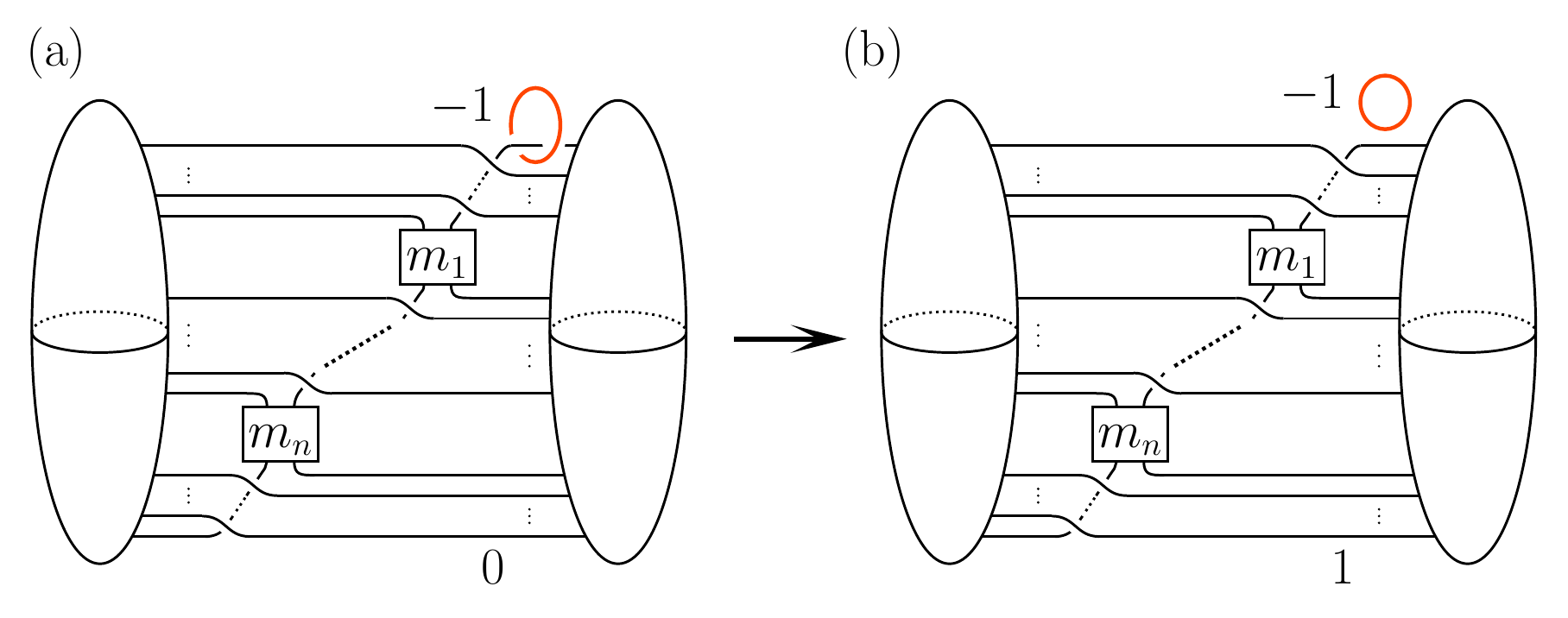}
\caption{The Kirby diagrams of $W_{n,k}$.}
\label{W2}
\end{figure}
Form the diagrams we see that each of $W_{n,k}$ and $W'_{n,k}$ admits a handle decomposition 
with a $0$-handle, $n+1$ $1$-handles and $n+2$ $2$-handles. 

We first construct a shadow of $W_{n,k}$ 
by a similar way to one in the proof of Lemma \ref{lem: upper bound}. 
The union of the $0$-handle and the $1$-handles of $W_{n,k}$ has a shadow $P$ 
considered in the proof of Lemma \ref{lem: upper bound}. 
Figure \ref{W1}-(b) shows this polyhedron $P$ with $n+2$ curves corresponding to 
the attaching circle of the $2$-handles of $W_{n,k}$. 
By attaching $n+2$ $2$-disks to these curves on this polyhedron, 
we obtain a shadow of $W_{n,k}$. 
Let us denote this polyhedron by $X$. 
It has $\displaystyle \sum_{j=1}^{n+1}(l_j-1)+6n$ true vertices, 
and $4n+1$ among them are adjacent to boundary regions. 
Since the special polyhedron obtained from $X$ by collapsing along all boundary regions of $X$ is 
also a shadow of $W_{n,k}$, 
we have $\displaystyle \spshco(W_{n,k})\leq \sum_{j=1}^{n+1}l_j+n-2$.

We next construct a shadow of $W'_{n,k}$. 
The diagram pictured in Figure \ref{W2}-(a) 
changes to the one pictured in Figure \ref{W2}-(b) by handle slide. 
Thus we have $W'_{n,k}\cong Y\# \mCP$ for some $4$-manifold $Y$. 
The manifold $Y$ admits a Kirby diagram that is same as the diagram of $C_{n,k}$ 
shown in Figure \ref{Stein position} up to framing coefficient, 
and hence it has the same shadow $P''$ as of $C_{n,k}$. 
Thus $W'_{n,k}$ has a shadow $P''$ with a bubble 
as shown in the left part of Figure \ref{bubble}. 
\begin{figure}
\includegraphics[width=90mm]{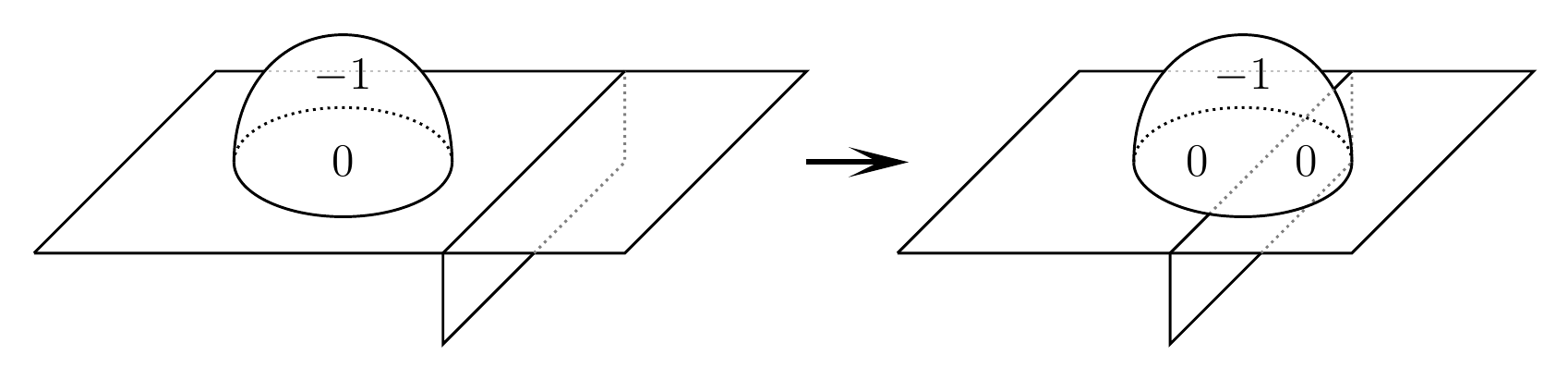}
\caption{The shadow $P''$ with a bubble and the ($0\to2$)-move. }
\label{bubble}
\end{figure}
The ($0\to2$)-move shown in Figure \ref{bubble} changes this shadow to a special one. 
The number of true vertices increases by $2$. 
It follows that  $\displaystyle \spshco(W'_{n,k})\leq \sum_{j=1}^{n+1}l_j+n-1$, 
and the lemma is proven. 
\end{proof}
\begin{lemma}
\label{lem: lower bound of exotic}
$\displaystyle 2n\leq \spshco(W_{n,k},W'_{n,k})$.
\end{lemma}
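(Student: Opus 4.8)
The plan is to bound $\spshco(W_{n,k},W'_{n,k})$ from below by relating it to the boundary complexity already computed in Lemma \ref{lem: lower bound}. Recall that by definition $\spshco(W_{n,k},W'_{n,k})=\max\{\spshco(W_{n,k}),\spshco(W'_{n,k})\}$, and that $\spshco(\partial M)\le\spshco(M)$ for any $4$-manifold $M$, since a special shadow of $M$ is in particular a special shadow of its boundary. Thus it suffices to produce a $4$-manifold among $W_{n,k}$, $W'_{n,k}$ whose boundary has special shadow-complexity at least $2n$, and invoke the monotonicity inequality. The natural candidate is $W'_{n,k}$: in the proof of Lemma \ref{lem: upper bound of exotic} it was shown that $W'_{n,k}\cong Y\#\mCP$, where $Y$ shares the Kirby diagram of $C_{n,k}$ up to framing coefficients.

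First I would analyze the boundaries. Since $W'_{n,k}\cong Y\#\mCP$, we have $\partial W'_{n,k}\cong\partial Y$, because connect-summing with the closed manifold $\mCP$ does not change the boundary. The manifold $Y$ differs from $C_{n,k}$ only in framing coefficients, so $\partial Y$ is obtained by the same surgery diagram as $\partial C_{n,k}$ with one framing altered. The key point is to verify that this framing change does not decrease the special shadow-complexity below $2n$. Concretely, I would re-examine the shadow $Q''$ of $M_{n,k}$ constructed in the proof of Lemma \ref{lem: lower bound}, adjust the gleam on the single region affected by the framing change, and check that the slope-length hypothesis $\sqrt{4\gl(R_\ast)^2+v(R_\ast)^2}>2\pi\sqrt{2\cdot 2n}$ of Theorem \ref{thm:Ishikawa-Koda} still holds for every region. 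Since that inequality was satisfied with room to spare in Lemma \ref{lem: lower bound}, and altering one gleam only increases $|\gl(R_\ast)|$ in the relevant region (or changes it in a controlled way), the hypothesis should persist, yielding $\spshco(\partial Y)=2n$, hence $\spshco(\partial W'_{n,k})=2n$.

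Combining these, $2n=\spshco(\partial W'_{n,k})\le\spshco(W'_{n,k})\le\spshco(W_{n,k},W'_{n,k})$, which is the desired bound. The main obstacle I anticipate is the careful bookkeeping in the second step: I must confirm that the altered framing in $Y$ corresponds to a genuinely computable Seifert or surgery description whose boundary shadow satisfies the filling inequality for \emph{all} regions, not merely the one whose gleam changes. In particular, the region whose gleam is perturbed must still satisfy the slope-length bound, and I would want to record explicitly how the framing shift propagates through Table \ref{table:gl of R}. If the direct gleam comparison is delicate, an alternative is to argue that $\partial W'_{n,k}$ and $\partial C_{n,k}$ are diffeomorphic (since the handle attached to form $W'_{n,k}$ is absorbed into the $\mCP$ summand and does not affect the boundary), in which case $\spshco(\partial W'_{n,k})=\spshco(\partial C_{n,k})=2n$ follows immediately from Lemma \ref{lem: lower bound}, giving the cleanest route to the conclusion.
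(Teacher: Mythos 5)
Your main argument is correct and is essentially the paper's own proof: express $\partial W'_{n,k}$ as surgery along the link of Figure \ref{2-bridge2}-(a) with one coefficient shifted, rebuild the shadow $Q''$ from Lemma \ref{lem: lower bound} with the single affected gleam changed (the paper computes $\gl(R_1)=1-\sum_{j=1}^{n+1}l_j$), verify that the slope-length hypothesis of Theorem \ref{thm:Ishikawa-Koda} still holds for every region, and conclude via the monotonicity $\spshco(\partial W'_{n,k})\leq\spshco(W'_{n,k})\leq\spshco(W_{n,k},W'_{n,k})$.

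However, the ``cleanest alternative'' you offer as a fallback is wrong, and you should not rely on it. The $-1$-framed $2$-handle is not absorbed boundary-neutrally: the handle slide that splits off the $\mCP$ summand raises the framing of the $0$-framed circle to $1$, so $Y$ agrees with $C_{n,k}$ only \emph{up to framing}, and $\partial W'_{n,k}\cong\partial Y$ is given by surgery coefficients $(1,0)$ on the $2$-bridge link rather than the coefficients $(0,0)$ defining $\partial C_{n,k}$. Attaching a $2$-handle always changes the boundary by a Dehn surgery, and here that surgery is genuinely nontrivial; this is precisely why the paper recomputes the gleam on $R_1$ and recertifies the Ishikawa--Koda condition instead of quoting Lemma \ref{lem: lower bound} verbatim. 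A minor further correction to your bookkeeping: the gleam change \emph{decreases} $|\gl(R_1)|$ by $1$ rather than increasing it; the inequality $\sqrt{4\gl(R_1)^2+v(R_1)^2}>2\pi\sqrt{4n}$ survives anyway because $\sum_{j=1}^{n+1}l_j$ is of order $n^{3/2}$ (and $v(R_1)=4n$), leaving an enormous margin.
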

\begin{proof}
Figure \ref{W2}-(b) implies that 
the $3$-manifold $\partial W'_{n,k}$ ($\cong\partial W_{n,k}$)
is obtained by the Dehn surgery along the link shown in Figure \ref{2-bridge2}-(a) with 
coefficients $1$ and $0$. 
By the same method as in Lemma \ref{lem: lower bound}, 
this $3$-manifold also has a shadow $Q''$ with the same gleams except for 
$\displaystyle \gl(R_1)=1-\sum_{j=1}^{n+1}l_j$ 
(using the same notation as in the proof of Lemma \ref{lem: lower bound}). 
These gleams still satisfy the condition of Theorem \ref{thm:Ishikawa-Koda}, 
and thus the $3$-manifold has special shadow-complexity $2n$. 
\end{proof}
\end{document}